\theoremstyle{plain}
\newtheorem{lemma}{Lemma}
\newtheorem{theorem}[lemma]{Theorem}
\newtheorem{corollary}[lemma]{Corollary}
\newtheorem{definition}[lemma]{Definition}
\newtheorem{step}{Step}
\theoremstyle{remark}
\newtheorem{remark}{Remark}
\newtheorem{case}{Case}
\numberwithin{equation}{section}
\newcommand{\noqed}{\renewcommand{\qed}{}}
\newcommand {\eps}{\varepsilon}
\newcommand  {\R}{\mathbb{R}}
\newcommand  {\I}{\mathbb{I}}
\renewcommand{\d}{\mathrm{d}}
\newcommand  {\e}{\mathrm{e}}
\newcommand  {\Fc}{\mathcal{F}}
\newcommand  {\Ic}{\mathcal{I}}
\newcommand  {\Cc}{\mathcal{C}}
\newcommand  {\Pc}{\mathcal{P}}
\newcommand  {\Chi}{\mathcal{X}}
\newcommand{\erfc}{\operatorname{erfc}}
\newcommand{\unique}{{\operatorname{unique}}}
\newcommand{\normal}{{\operatorname{normal}}}
\newcommand{\jump}{{\operatorname{jump}}}
\renewcommand{\deg}{{\operatorname{deg}}}
\newcommand{\reg}{{\operatorname{reg}}}
\newcommand{\RD}{{\operatorname{RD}}}
\newcommand{\ES}{{\operatorname{ES}}}
\newcommand{\ringwidth}{L}
\DeclareMathOperator*{\esssup}{ess\,sup}
\newcommand*{\abs}[1]{\lvert #1 \rvert}
\begin{document}
\title[Conditional uniqueness of solutions to the Keller--Rubinow model]%
{Conditional uniqueness of solutions to the Keller--Rubinow model
  for Liesegang rings \\ in the fast reaction limit}

\author[Z. Darbenas]{Zymantas Darbenas}
\email[Z. Darbenas]{z.darbenas@jacobs-university.de}
\author[R. v.~d.~Hout]{Rein van der Hout}
\email[R. v.~d.~Hout]{rein.vanderhout@gmail.com}
\author[M. Oliver]{Marcel Oliver}
\email[M. Oliver]{m.oliver@jacobs-university.de}

\address[Z. Darbenas and M. Oliver]%
{School of Engineering and Science \\
 Jacobs University \\
 28759 Bremen \\
 Germany}

\address[R. v.~d.~Hout]%
{Dunolaan 39 \\
6869VB Heveadorp \\
The Netherlands}

\date{\today}

\begin{abstract}
We study the question of uniqueness of weak solution to the fast
reaction limit of the Keller and Rubinow model for Liesegang rings as
introduced by Hilhorst \emph{et al.}\ (J. Stat.\ Phys.\ 135, 2009,
pp.\ 107--132).  The model is characterized by a discontinuous
reaction term which can be seen as an instance of spatially
distributed non-ideal relay hysteresis.  In general, uniqueness of
solutions for such models is conditional on certain transversality
conditions.  For the model studied here, we give an explicit
description of the precipitation boundary which gives rise to two
scenarios for non-uniqueness, which we term ``spontaneous
precipitation'' and ``entanglement''.  Spontaneous precipitation can
be easily dismissed by an additional, physically reasonable criterion
in the concept of weak solution.  The second scenario is one where the
precipitation boundaries of two distinct solutions cannot be ordered
in any neighborhood of some point on their common precipitation
boundary.  We show that for a finite, possibly short interval of time,
solutions are unique.  Beyond this point, unique continuation is
subject to a spatial or temporal transversality condition.  The
temporal transversality condition takes the same form that would be
expected for a simple multicomponent semilinear ODE with discontinuous
reaction terms.
\end{abstract}

\maketitle


\section{Introduction}

We study the question of uniqueness of weak solution to the fast
reaction limit of the Keller and Rubinow model for Liesegang rings,
\begin{subequations}
  \label{e.original}
\begin{gather}
  u_t = u_{xx} +
        \frac{\alpha \beta}{2 \sqrt t} \, \delta (x - \alpha \sqrt{t})
        - p[x,t;u] \, u \,,
  \label{e.original.a} \\
  u_x(0,t) = 0 \quad \text{for } t \geq 0 \,, \label{e.original.b} \\
  u(x,0) = 0 \quad \text{for } x>0 \label{e.original.c}
\end{gather}
where the precipitation function $p[x,t;u]$ depends on $x$, $t$, and
nonlocally on $u$ via
\begin{equation}
  p[x,t;u] = H
  \biggl(
    \int_0^t (u(x,\tau) - u^*)_+ \, \d \tau
  \biggr) \,.
  \label{e.hhmo-p}
\end{equation}
\end{subequations}
Here, $H$ denotes the Heaviside function with the convention that
$H(0)=0$ and $u^*$ denotes the supersaturation concentration.

The model was derived by Hilhorst \emph{et al.}\
\cite{HilhorstHM:2007:FastRL,HilhorstHM:2009:MathematicalSO}, based on
earlier work in
\cite{HilhorstHP:1996:FastRL,HilhorstHP:1997:DiffusionPF}, from a
three-component two-stage system of reaction-diffusion equations due
to Keller and Rubinow \cite{KellerR:1981:RecurrentPL} under the
assumption that one of the first-stage reactants does not diffuse,
that the lower threshold of criticality is zero, and that the reaction
constant of the first-stage reaction is large.  In the following, we
shall refer to the reduced model \eqref{e.original} as the HHMO-model.

Hilhorst \emph{et al.}\
\cite{HilhorstHM:2007:FastRL,HilhorstHM:2009:MathematicalSO}
introduced and proved existence of weak solutions to
\eqref{e.original}.  Modulo technical details, weak solutions are
pairs $(u,p)$ that satisfy \eqref{e.original.a} integrated against a
suitable test function such that
\begin{equation}
\label{e.hhmo-p-weak}
  p(x,t) \in H
  \biggl(
    \int_0^t (u(x,\tau) - u^*)_+ \, \d \tau
  \biggr)
\end{equation}
where $H$ denotes the Heaviside \emph{graph}
\begin{equation}
\label{p.H.def}
  H(y) \in
  \begin{cases}
    0 & \text{when } y<0 \,, \\
    [0,1] & \text{when }y=0 \,, \\
    1 & \text{when } y>0 \,,
  \end{cases}
\end{equation}
subject to the additional requirement that $p(x,t)$ takes the value
$0$ whenever $u(x,s)$ is strictly less than the threshold $u^*$ for
all $s\in[0,t]$.  This constraint can be stated as
\begin{equation}
\label{e.hhmo-p-weak-alternative}
  p(x,t)\in
  \begin{cases}
     0&\text{ if }\sup_{s\in[0,t]}u(x,s)<u^* \,,\\
     [0,1]&\text{ if }\sup_{s\in[0,t]}u(x,s)=u^* \,,\\
     1 &\text{ if }\sup_{s\in[0,t]}u(x,s)>u^* \,.
  \end{cases}
\end{equation}

The problem left open by \cite{HilhorstHM:2009:MathematicalSO} is the
question of uniqueness of weak solutions to the HHMO-model.  The main
obstacle is that the precipitation term is neither Lipschitz
continuous nor local in time.  Moreover, it may not even be monotonic
in the following sense.  If $u_1$ and $u_2$ are weak solutions with
associated precipitation functions $p_1$ and $p_2$, it is not clear
whether
\begin{equation}
  (p_1 \, u_1 - p_2 \, u_2) \, (u_1-u_2) \ge 0
  \label{e.monotonicity}
\end{equation}
a.e.\ in space-time.  An estimate of this form would imply uniqueness
by standard energy methods.  We remark that for other models involving
phase transitions, e.g.\ for moist advection in models of the
atmosphere with humidity and saturation
\cite{CotiZelatiFT:2013:EquationsAH,TemamT:2016:EquationsMA},
monotonicity can be asserted.  The behavior of the precipitation
function is an instance of a one-sided non-ideal relay.  In general,
non-ideal relays switch from an ``off-state'' $0$ to the ``on-state''
$1$ when the input crosses a threshold $\mu$, and switches back to
zero only when the input drops below a lower threshold $\lambda<\mu$.
Here, the lower threshold is $\lambda=0$, so the relay never switches
back.  There are different ways of defining the behavior of non-ideal
relays; see, e.g., the brief survey in \cite{CurranGT:2016:RecentAR}.
The formulations differ in their behavior when the input reaches, but
does not exceed the relay threshold.  The three options described in
\cite{CurranGT:2016:RecentAR} are: (i) The relay switches as soon as
the threshold is reached
\cite{GurevichR:2013:WellPE,GurevichST:2013:ReactionDE,KrasnoselskiiP:1989:SystemsH,Visintin:1994:DifferentialMH},
(ii) the relay switches only when the threshold is exceeded,
attributed to Alt \cite{Alt:1985:ThermostatP}, or (iii) may take
intermediate values at the threshold subject to certain monotonicity
constraints, which are referred to as a \emph{completed relay}
\cite{AikiK:2008:MathematicalMB,Visintin:1986:EvolutionPH}.  All these
formulations are ``rate independent'', i.e., the state of the relay
only depends on the past and present values of the input, but not on
their rate of change.  All rate-independent formulations have issues
regarding their well-posedness in cases of non-transversal crossings
of the threshold.

The uniqueness issue can be illustrated with a simple system of two
ordinary differential equations, but extends to the case of spatially
distributed relays, including the HHMO-model as a reaction-diffusion
equation with precipitation.  For simplicity, we translate the
crossing of the critical threshold into the origin and look at the
non-autonomous system
\begin{subequations}
\begin{gather}
  \dot u(t) = f(t) + u(t) + v(t) - p_u(t) \,, \\
  \dot v(t) = f(t) + u(t) + v(t) - p_v(t) \,, \\
  u(0) = v(0) = 0 \,.
\end{gather}
\end{subequations}
Here, $p_u$ and $p_v$ denote the precipitation condition
\eqref{e.hhmo-p-weak-alternative} with $u^*=0$ for $u$ and $v$,
respectively.  If $p_u$ and $p_v$ are permitted to assume fractional
values, there is no hope for uniqueness, so the question here is
whether the restriction of $p_u$ and $p_v$ to binary values suffices
to select a unique solution.

Let us first consider the case $f(t)=\tfrac12$.  In this case, the
vector field without the precipitation terms is positive in both
components at time $t=0$ when the threshold is touched; we speak of a
\emph{transversal} crossing.  We see that both precipitation functions
must switch from zero to one at that instant.  Indeed, if none of the
precipitation functions switches, the solution is
$u(t)=v(t) = (\exp(2t)-1)/4 >0$ on some interval of positive time,
which violates \eqref{e.hhmo-p-weak-alternative}.  If one of the
precipitation function switches, $p_u$ say, the solution is
$u(t)=-t/2$, $v(t)=t/2$, so that the precipitation condition is still
violated on some interval of positive time.  So there is no choice and
both must switch.

If, on the other hand, $f(t)=t$, the vector field without the
precipitation terms is zero in both components at time $t=0$ when the
threshold is touched; we speak of a \emph{non-transversal} crossing.
Again, it is easy to see that at least one of the precipitation
functions must switch at $t=0$ for if not, both $u$ and $v$ will be
positive for $t>0$, violating the precipitation condition
\eqref{e.hhmo-p-weak-alternative}.  However, suppose that $p_u$
switches to $1$ at $t=0$, while $p_v$ remains zero.  Then $u(t)=-t$
and $v(t)=0$, which is a feasible solution.  Due to the symmetry,
$p_u=0$ and $p_v=1$ also gives a feasible solution.

We remark that as soon as fractional values are permitted, there are
further feasible solutions: in the transversal example, e.g.\
$p_u=p_v=\tfrac12$ is feasible, in the non-transversal example, any
convex combination $p_u=\lambda$ and $p_v=1-\lambda$ with
$\lambda \in [0,1]$ gives a feasible solution.  We believe that a
better disambiguation criterion would permit fractional values of the
precipitation function augmented by a suitable minimality condition.
This, however, is not trivial and outside of the scope of this paper.
For the present paper on the HHMO-model, we avoid this discussion
altogether by proving that, on some positive interval of time, the
precipitation function of a weak solution is essentially binary, i.e.\
binary except perhaps for values on a space-time set of measure zero.

Our results are the following.  We identify two scenarios for
non-uniqueness, ``spontaneous precipitation'' and ``entanglement''.
Spontaneous precipitation can be easily dismissed by an additional,
physically reasonable criterion in the concept of weak solution.
Entanglement is a scenario where there exists a point on the common
precipitation boundary such that in every neighborhood of this point
there are subregions where each one of two non-unique boundary curves
is ahead of the other.  To dismiss the second scenario, we perform a
detailed study of the topological and analytic properties of the
precipitation boundary.  Our results are two-fold.  First, there
exists an initial interval of time where monotonicity in the sense of
\eqref{e.monotonicity}, hence uniqueness, holds true.  Second, we
state a transversality condition, namely that the temporal rate of
change of concentration is non-degenerate at the precipitation
boundary, which prevents entanglement and implies monotonicity, hence
uniqueness.  Our analysis is restricted to a region where the solution
consists of a succession of distinct precipitation rings, the
\emph{ring domain}.  In numerical simulations of a range of models,
including the HHMO-model and the full Keller--Rubinow model, the ring
domain appears to persist for only a finite interval of time, longer
than our initial interval of uniqueness; breakdown of the ring domain
is proved for a simplified version of the HHMO-model in
\cite{DarbenasO:2021:BreakdownLP}.  After that, solutions may become
topologically even more complex and our methods do not apply.  For the
simplified model in \cite{DarbenasO:2021:BreakdownLP}, a reduction of
the problem to a scalar integral equation is possible and the question
of uniqueness can be answered in the affirmative in a class of
solutions that excludes accumulation of precipitation rings in reverse
time \cite{DarbenasO:2019:UniquenessSW}.  For the HHMO-model itself,
this reduction is not possible and the question remains open.

The paper is structured as follows.  In Section~\ref{s.weak}, we
review the concept of weak solutions, their basic properties, and show
that there are weak solutions whose precipitation function is not
changing at a point after the reactant source has passed.  In
Section~\ref{s.local-properties}, we introduce the ``ring domain'', a
non-empty region in which the solution can be characterized by
distinct precipitation bands, and prove a number of topological and
analytic properties of the precipitation boundary on the ring
domain.  In particular, we show that the precipitation function can be
given a canonical form up to changes on space-time sets of measure
zero.  In Section~\ref{sec:u.dif.cont} we present a boot-strap
argument that guarantees existence and continuity of a classical time
derivative away from the precipitation boundary and give a sufficient
condition that ensures existence and continuity of the time derivative
on the precipitation boundary as well.  Finally, uniqueness is proved
in Section~\ref{the.uniqueness}, unconditionally up to a finite,
possibly small time and under a temporal transversality condition on
the entire ring domain.

\section{Weak solutions}
\label{s.weak}

To begin, we note that without the precipitation term,
\eqref{e.original} has the explicit solution
\begin{gather}
\label{psi.def}
  \psi(x,t) = \Psi \Bigl( \frac{x}{\sqrt t} \Bigr)
\end{gather}
where
\begin{align}
  \label{self.similar.p0}
  \Psi(\eta)
  & = \frac{\alpha \beta \sqrt\pi}2 \, \e^{\tfrac{\alpha^2}4} \cdot
      \begin{dcases}
        \erfc(\alpha/2) & \text{if } \eta \leq \alpha \,, \\
        \erfc(\eta/2) & \text{if } \eta > \alpha \,.
      \end{dcases}
\end{align}
For further reference, we also define the standard heat kernel
\begin{equation}
  \Phi(x,t) = \begin{dcases}\frac1{\sqrt{4\pi t}} \,
                 \e^{-\tfrac{x^2}{4t}}&\text{if }t>0 \,,\\
		 0&\text{if }t\le0 \,.
	      \end{dcases}
\end{equation}

In the following definition of weak solution, we follow
\cite{HilhorstHM:2009:MathematicalSO,DarbenasHO:2018:LongTA} and
extend the spatial domain to the entire real line by even reflection.
In the main body of the paper, however, it is easier to formulate all
arguments and definitions exclusively on the first quadrant of the
$x$-$t$ plane.  Due to the implied even symmetry, we may still refer
to the fields at $x<0$ when convenient, in particular when stating
arguments based on the Duhamel principle.

\begin{definition}
\label{weak.sol.def}
A \emph{weak solution} to problem \eqref{e.original} is a pair $(u,p)$
satisfying
\begin{enumerate}[label={\upshape(\roman*)}]
  \item $u$ and $p$ are even in $x$, i.e.\ $u(x,t)=u(-x,t)$ and
  $p(x,t)=p(-x,t)$ for all $x \in \R$ and $t\ge0$,
  \item\label{weak.ii} $u-\psi\in C^{1,0}(\R\times[0,T])\cap L^{\infty}(\R\times[0,T])$ for
  every $T>0$,
  \item\label{weak.iii} $p$ is measurable, defined pointwise, and
  satisfies \eqref{e.hhmo-p-weak-alternative},
\item\label{weak.3.5} $p(x,t)$ is non-decreasing in time $t$ for every
$x \in \R$,
\item\label{weak.iv} the relation
\begin{equation}
\label{weak.sol.def.eq}
  \int_0^T\int_\R\varphi_t \, (u-\psi) \, \d y \, \d s
  = \int_0^T \int_\R
    \bigl(
      \varphi_x \, (u-\psi)_x + p \, u \, \varphi
    \bigr) \, \d y \, \d s
\end{equation}
holds for every $\varphi\in C^{1,1}(\R\times[0,T])$ that vanishes for
large values of $|x|$ and for time $t=T$.
\end{enumerate}
\end{definition}

The following additional notation is used throughout the paper.  We
define
\begin{subequations}
\begin{equation}
  \Pc=\{(x,t) \colon \alpha^2 \, t=x^2\}
\end{equation}
to denote the parabola on which the point source moves, and write 
\begin{align}
  D_o&=\{(x,t) \colon 0< x^2 < \alpha^2 \, t \} \,, \\
  D_u&=\{(x,t) \colon 0 < \alpha^2 \, t<x^2 \} 
\end{align}
\end{subequations}
to denote the open region of the upper half-plane over and under the
parabola $\Pc$, respectively.  Moreover, we formalize the notion of
precipitation ring and interring (or gap) as follows.

\begin{definition}
\label{ring}
The interval $[a,b]$ with $b>a>0$ is a \emph{ring} if the set
\begin{equation}
  \{ (y,s) \colon y \in[a-\eps_1,b+\eps_2], 
     \alpha^2 \, s\ge y^2,\, p(y,s)<1 \}
  \subset \R^2
\end{equation}
with non-negative $\eps_1$, $\eps_2$ has measure zero if and
only if $\eps_1=\eps_2=0$.

When $b>a=0$, the interval $[0,b]$ is a \emph{ring} if the set 
\begin{equation}
  \{(y,s) \colon y\in[0,b+\eps_1], \alpha^2 \, s\ge y^2,\, p(y,s)<1 \}
  \subset \R^2
\end{equation}
with non-negative $\eps_1$ has measure zero if and only if
$\eps_1=0$.
\end{definition}

\begin{remark}
If $[a,b]$ is a ring and $x \in [a,b]$---we say that ``$x$ is
contained in a ring''---then
$\max_{s\in[0, x^2/\alpha^2]} u(x,s) \ge u^*$.  For if not, by
continuity of $u$, there would be a neighborhood of the line segment
$\{x\} \times [0, x^2/\alpha^2]$ on which $u<u^*$.  But for a weak
solution satisfying property (P), this means that $x$ cannot be
contained in a ring.  In Section~\ref{s.local-properties} we shall
show that $\max_{s\in[0, x^2/\alpha^2]} u(x,s) = u^*$ only if the
maximum is taken exclusively on $\Pc$, we then speak of a
\emph{degenerate} precipitation boundary point.
\end{remark}

\begin{definition}
\label{interring}
The interval $[a,b]$ with $b>a>0$ is an \emph{interring} if the set
\begin{equation}
  \{(y,s) \colon y\in[a-\eps_1,b+\eps_2],\, p(y,s)>0 \}
  \subset \R^2
\end{equation}
with non-negative $\eps_1$, $\eps_2$ has measure zero if and
only if $\eps_1=\eps_2=0$.

When $0<a$, the interval $[a,\infty)$ is an \emph{interring} if the
set
\begin{equation}
  \{(y,s) \colon y\ge a-\eps, p(y,s)>0 \}
  \subset \R^2
\end{equation}
with non-negative $\eps$ has measure zero if and only if
$\eps=0$.
\end{definition}

When $u^*\ge\Psi(\alpha)$, the so-called subcritical or marginal
cases, it is not possible to have a recurrent pattern of rings and
interrings.  In these cases, weak solutions have a simple structure
which is completely described by \cite[Theorems~4
and~5]{DarbenasHO:2018:LongTA}.  Therefore we focus on the interesting
\emph{supercritical} case where $u^*<\Psi(\alpha)$.  In this case,
the following lemma asserts that at least a first precipitation ring
always exists.

\begin{lemma}[Existence of a first precipitation ring]
\label{one.ring}
Every weak solution to equation \eqref{e.original} with supercritical
precipitation threshold $u^*$ has at least one precipitation ring of
width at least $X_1 \ge \ringwidth$, where
\begin{equation}
  \ringwidth=\sqrt{\frac{\Psi(\alpha)-u^*}{\Psi(\alpha)}} \,.
\end{equation}
In particular, there is no interring of the form $[0,d]$.
\end{lemma}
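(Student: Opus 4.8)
The plan is to reduce the ring property to a pointwise lower bound on $u$ and then extract that bound from the Duhamel representation. Since the explicit self-similar solution $\psi$ solves \eqref{e.original} without the precipitation term and $p\,u\ge 0$, the comparison principle gives $0\le u\le\psi$; moreover $\Psi$ attains its maximum $\Psi(\alpha)$ on $\{\eta\le\alpha\}$ and decreases for $\eta>\alpha$, so $u\le\psi\le\Psi(\alpha)$ throughout the first quadrant. Writing $u=\psi-w$, the loss $w$ solves the heat equation with nonnegative source $p\,u$ and zero initial data, hence $w\ge0$ and, by Duhamel,
\[
  w(x,t) = \int_0^t \int_\R \Phi(x-y,t-s)\, p(y,s)\, u(y,s) \, \d y \, \d s .
\]

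First I would record the reduction. By property~\ref{weak.3.5}, $p(y,\cdot)$ is non-decreasing, and by \eqref{e.hhmo-p-weak-alternative} one has $p(y,s)=1$ as soon as $u(y,\sigma)>u^*$ for some $\sigma\le s$. Consequently, if for a given $y$ there is some $\sigma\le y^2/\alpha^2$ with $u(y,\sigma)>u^*$, then $p(y,s)=1$ for every $s\ge y^2/\alpha^2$, i.e.\ $p\equiv1$ on the vertical ray over the parabola $\Pc$ at $y$. Thus, to exhibit a ring $[0,X_1]$ it suffices to show that the set of $y\in[0,X_1]$ for which $\sup_{\sigma\le y^2/\alpha^2}u(y,\sigma)>u^*$ fails has measure zero, and that $X_1$ is maximal with this property; the degenerate boundary points, where the supremum equals $u^*$ and is attained only on $\Pc$, form a measure-zero set that causes no harm.

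The heart of the matter is therefore a lower bound on $u$ along $\Pc$ and just below it. Evaluating on $\Pc$, where $\psi(y,y^2/\alpha^2)=\Psi(\alpha)$, and using the crude bound $p\,u\le\Psi(\alpha)$ together with $\int_\R\Phi(x-y,t-s)\,\d y=1$ gives $w(x,t)\le\Psi(\alpha)\,t$, whence
\[
  u\Bigl(y,\tfrac{y^2}{\alpha^2}\Bigr) \ge \Psi(\alpha)\Bigl(1-\tfrac{y^2}{\alpha^2}\Bigr).
\]
Requiring the right-hand side to exceed $u^*$ already yields a ring of positive width around the centre, and in particular shows that $p>0$ on a set of positive measure near $x=0$, so that no interring of the form $[0,d]$ can exist. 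The main obstacle is to sharpen this estimate so that the guaranteed width is exactly $\ringwidth=\sqrt{(\Psi(\alpha)-u^*)/\Psi(\alpha)}$: the bound $w\le\Psi(\alpha)\,t$ is wasteful, because $p\,u$ vanishes outside the precipitation region $\{y<\eta^*\sqrt s\}$ (with $\Psi(\eta^*)=u^*$) and, more importantly, the precipitation that can have accumulated by the time the source reaches a near-central point $y$ sits at points closer to the origin, at a positive distance that is felt through the heat kernel. Exploiting this confinement---equivalently, optimizing the lower bound $u(y,\sigma)\ge\Psi(y/\sqrt\sigma)-w(y,\sigma)$ over $\sigma\le y^2/\alpha^2$ rather than only at $\Pc$---is where the real work lies, and is what upgrades the guaranteed width to $\ringwidth$.

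Finally I would assemble the pieces: define $X_1$ as the supremum of $b$ such that $p\equiv1$ a.e.\ on $\{y\le b,\ \alpha^2 s\ge y^2\}$, verify via the sharp lower bound that every $y<\ringwidth$ satisfies the strict inequality $\sup_{\sigma\le y^2/\alpha^2}u(y,\sigma)>u^*$ (an open condition, so the exceptional set is empty in the interior and at worst measure zero at the endpoint), and check that $[0,X_1]$ meets Definition~\ref{ring}. The strictness of the inequality for $y<\ringwidth$ is exactly what forces the measure-zero complement in the ring definition, while maximality of $X_1$ supplies the ``only if'' clause.
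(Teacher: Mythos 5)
Your executed argument is, in substance, the paper's own proof. The ``crude'' bound you derive from Duhamel, $w(x,t)\le\Psi(\alpha)\,t$, i.e.\ $u\ge\psi-\Psi(\alpha)\,t$, is precisely the estimate \eqref{e.hhmo-lem3.5} (Lemma~3.5 of Hilhorst \emph{et al.}) that the paper invokes as its first step; evaluating it on $\Pc$, where $\psi\equiv\Psi(\alpha)$, concluding that $u>u^*$ on the part of the parabola with $t<t^*=(\Psi(\alpha)-u^*)/\Psi(\alpha)$, and then taking $X_1$ to be the supremum of abscissas up to which $p=1$ a.e.\ above $\Pc$ is exactly how the paper concludes. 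Your reduction (a threshold crossing at or below $\Pc$ forces $p\equiv1$ on the whole vertical ray above $\Pc$ via \eqref{e.hhmo-p-weak-alternative}), the measure-zero bookkeeping against Definition~\ref{ring}, and the interring conclusion are all correct and match the paper.

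The step you defer as ``where the real work lies''---upgrading the guaranteed width from $\alpha\sqrt{t^*}$ to $\sqrt{t^*}=\ringwidth$---is not carried out in the paper either. The paper's proof ends with $X_1\ge\alpha\sqrt{t^*}\equiv\ringwidth$, silently identifying $\alpha\sqrt{t^*}$ with the quantity $\ringwidth$ that the lemma statement defines \emph{without} the factor $\alpha$; this is an internal normalization inconsistency (a factor of $\alpha$ missing in the statement, or spurious in the proof), not a sharpening that the authors perform. So what you perceived as the essential analytic difficulty is a phantom: your proof is complete at exactly the level of rigor of the paper's, with the constant $\alpha\sqrt{t^*}$. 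I would add that the sharpening you sketch is unlikely to deliver the $\alpha$-free constant anyway: for $\sigma<y^2/\alpha^2$ the point $(y,\sigma)$ lies below $\Pc$, where $\Psi(y/\sqrt\sigma)<\Psi(\alpha)$, so the gain in the loss term $\Psi(\alpha)\sigma$ competes against a degradation of the main term; using $\Psi'(\eta)=-\tfrac{\alpha\beta}2\,\e^{(\alpha^2-\eta^2)/4}$ for $\eta>\alpha$, the $\sigma$-derivative of the lower bound $\Psi(y/\sqrt\sigma)-\Psi(\alpha)\sigma$ at the endpoint $\sigma=y^2/\alpha^2$ equals $\tfrac{\alpha^4\beta}{4y^2}-\Psi(\alpha)$, which is positive for small $y$, so evaluation on $\Pc$ is already locally optimal there. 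The correct reading is simply that the guaranteed width is $\alpha\sqrt{t^*}$ and the constant in the statement should carry the factor $\alpha$.
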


\begin{proof}
First, recall \cite[Lemma~3.5]{HilhorstHM:2009:MathematicalSO}, which
states that
\begin{equation}
  u(x,t)\ge\psi(x,t)-\Psi(\alpha) \, t \,.
  \label{e.hhmo-lem3.5}
\end{equation}
Second, note that there exists a $t^*>0$ such that
\begin{equation}
  u^*=\Psi(\alpha)-\Psi(\alpha) \, t^* \,.
\end{equation}
Hence, \eqref{e.hhmo-lem3.5} implies that if $(x,t)\in\Pc$ with
$t<t^*$, then
\begin{equation}
  u(x,t)\ge\psi(x,t)-\Psi(\alpha)t
  >\Psi(\alpha)-\Psi(\alpha)t^*=u^* \,.
  \label{e.ulowerbound}
\end{equation}
In other words, $u$ is strictly greater than the precipitation
threshold $u^*$ on all points of the parabola $\Pc$ with $t<t^*$.
Now let 
\begin{equation}
  X_1 = \sup \bigl\{ x \colon 
    m\{(y,s) \colon y\in[0,x], \alpha^2 \, s\ge y^2, p(y,s)<1 \}
    = 0 \bigr\} \,.
\end{equation}
Then, $[0,X_1]$ is a ring according to Definition~\ref{ring} of width
$X_1 \geq \alpha\sqrt{t^*} \equiv \ringwidth$. 
\end{proof}

When the concentration reaches, but does not exceed the precipitation
threshold on sets of positive measure, which, as we shall show in
Section~\ref{s.local-properties}, is restricted to the region $D_o$,
``spontaneous precipitation'' might occur: at some time horizon $t$,
the precipitation function switches on a subset of
$\{x \colon u(x,t) = u^*\}$ of positive measure from $0$ to $1$.  In
\cite[Remark~3]{DarbenasHO:2018:LongTA}, we demonstrate that, at least
for the case of a marginal precipitation threshold, this possibility
is real.  To exclude non-uniqueness by spontaneous precipitation, we
pose the following additional restriction on weak solutions:
\begin{itemize}
\item [(P)] There exists a measurable function $p^*$ such that for
a.e.\ $x \in \R_+$,
\begin{equation}
  \label{p.property}
  p(x,t) = p^*(x) \quad \text{for} \quad t > x^2/\alpha^2 \,. 
\end{equation}
\end{itemize}
In the following, we sketch that a small modification of the existence
proof in \cite{HilhorstHM:2009:MathematicalSO} yields weak solutions
that satisfy condition (P).  This argument shows that condition (P) is
a natural additional requirement on weak solutions.  Within this
restricted class of weak solutions, non-uniqueness can only originate
from essential differences of the precipitation functions that first
occur in $D_u$ or on the parabola $\Pc$.  This is a much harder
problem and the subject of the remaining sections of this paper.

\begin{theorem} \label{t.pp}
There exists a solution $(u,p)$ to \eqref{e.original} having property
\textup{(P)}.
\end{theorem}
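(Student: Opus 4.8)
The plan is to revisit the existence proof of Hilhorst \emph{et al.}\ \cite{HilhorstHM:2009:MathematicalSO}, which constructs a weak solution as a limit of regularized problems in which the Heaviside graph $H$ is replaced by a Lipschitz approximation $H_\eps$, and to introduce one small modification: we freeze the precipitation input once the reactant source has passed. Concretely, in the regularized problem we set
\[
  p_\eps(x,t) = H_\eps\Bigl( \int_0^{\min(t,\, x^2/\al^2)} (u_\eps(x,\tau)-u^*)_+ \, \d\tau \Bigr) \,,
\]
so that for $t > x^2/\al^2$ the value $p_\eps(x,t) = p_\eps^*(x)$ depends on $x$ alone. Property~(P) thus holds at the regularized level by construction, and since the cap at $x^2/\al^2$ merely truncates a nondecreasing integral, the modification does not affect the monotonicity in time, the comparison with $\psi$, or any of the a~priori bounds used in \cite{HilhorstHM:2009:MathematicalSO}.

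First I would solve the modified regularized problem by the same Duhamel fixed-point argument, obtaining $u_\eps$ with the uniform $L^\infty$ and $C^{1,0}$ estimates on $u_\eps-\psi$ already established there; the truncation of the integrand is harmless because $H_\eps$ is nondecreasing and takes values in $[0,1]$. Next I would extract a subsequence along which $u_\eps \to u$ locally in $C^{1,0}$ and $p_\eps \rightharpoonup p$ weakly-$*$ in $L^\infty$, and pass to the limit in the weak formulation \eqref{weak.sol.def.eq} exactly as in the unmodified proof. Because $p_\eps(x,t) = p_\eps^*(x)$ is independent of $t$ on $\{t > x^2/\al^2\}$, the same holds for the weak-$*$ limit, so the limiting pair $(u,p)$ inherits property~(P) with $p^*$ the weak-$*$ limit of the $p_\eps^*$.

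The one point that requires genuine care, and which I expect to be the main obstacle, is to confirm that the frozen precipitation function still satisfies the pointwise inclusion \eqref{e.hhmo-p-weak-alternative}. The only delicate case is a point $x$ with $p^*(x)=0$: consistency then demands that $\sup_{s\in[0,t]} u(x,s) \le u^*$ for \emph{all} $t$, not merely for $t \le x^2/\al^2$. Since $p^*(x)=0$ already forces $u(x,\cdot) \le u^*$ on $[0, x^2/\al^2]$ by continuity, what remains is to rule out a late, diffusion-driven re-crossing of the threshold in $D_o$. I would establish this by a maximum-principle argument for $w = u_t$: in any subregion of $D_o$ where $p$ is locally constant, $w$ solves the linear parabolic equation $w_t = w_{xx} - p\,w$, and one shows $w \le 0$ above the parabola, so that $u(x,\cdot)$ is nonincreasing once the source has passed. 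The subtlety is the lateral diffusive inflow from neighboring precipitating regions and the behavior of $u_t$ along $\Pc$ itself; controlling the sign of $w$ across the (measure-zero) precipitation boundary, rather than arguing from decay of $u$ alone, is what makes this step nontrivial. With this non-re-crossing property in hand, the frozen value $p^*(x)=0$ lies in the Heaviside graph for every $t$, the inclusion holds, and $(u,p)$ is a weak solution with property~(P).
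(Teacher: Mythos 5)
Your construction is the same as the paper's: you cap the hysteresis integral at $\min\{t,x^2/\alpha^2\}$ in the regularized problem, check that the fixed-point argument and the compactness estimates survive the modification, and observe that property (P) is inherited by the limit because each approximant is independent of $t$ on $\{t>x^2/\alpha^2\}$. You also correctly identify the crux: after freezing, one must verify the precipitation condition \eqref{e.hhmo-p-weak-alternative} on $D_o$, i.e.\ rule out a late re-crossing of the threshold above the parabola. (A minor structural remark: the paper follows Hilhorst \emph{et al.}\ through \emph{two} limits, first $\eps\to0$ and then the fast reaction limit $k\to\infty$ in which the smooth source $k\,a_k\,b_k$ becomes the singular source; your sketch compresses this into a single $\eps$-limit, which glosses over where the delta source comes from, but that is a presentational issue rather than a gap.)

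The genuine gap is in how you propose to close the crux. Your maximum-principle argument for $w=u_t$ is not available at this stage of the argument: existence and continuity of a classical time derivative is precisely the delicate issue that the paper spends Section~\ref{sec:u.dif.cont} on, and there it is obtained only on the ring domain, away from the precipitation boundary $\Lambda_\normal$ or under a transversality hypothesis --- none of which can be assumed for the limit object you have just constructed. Moreover, even granting enough regularity to write $w_t=w_{xx}-p\,w$ on each subregion of $D_o$ where $p$ is locally constant, the maximum principle there requires sign control of $w$ on the full parabolic boundary of each such subregion, which includes $\Pc$ (where the singular source sits) and the internal interfaces separating precipitating from non-precipitating regions; the sign of $u_t$ on those interfaces is exactly what you are trying to prove, so the argument is circular. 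The paper avoids all of this with a much more elementary observation: Lemma~\ref{u-psi.non-incr} (monotonicity of $u-\psi$ in time) is stated so that it does \emph{not} require the precipitation condition \eqref{e.hhmo-p-weak-alternative} a priori --- only that $p$ is measurable, non-negative, bounded, and that the weak form holds --- hence it applies to the limit pair before \eqref{e.hhmo-p-weak-alternative} is known. Since $\psi=\Psi(x/\sqrt t)$ is \emph{constant} on $D_o$, monotonicity of $u-\psi$ immediately gives that $u$ itself is non-increasing in time on $D_o$, with no differentiability of $u$ needed, and the precipitation condition on $D_o$ follows. Replacing your $u_t$ argument by this monotonicity lemma is what is needed to make the proof complete.
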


\begin{proof}
The proof requires a minor modification of the existence argument
given in \cite[pp.~118--123]{HilhorstHM:2009:MathematicalSO}.  Their
construction proceeds in three steps.  First, they consider the weak
formulation of a mollified version of the second-stage reaction of the
Keller--Rubinow process which, written formally in its strong form and
in non-dimensional variables, reads
\begin{equation}
  c_t = c_{xx} + k \, a_k \, b_k - c \, H_\eps
    \biggl( \int_0^t \bigl(c(x,s) - u^* \bigr)_+ \, \d s \biggr) \,,
  \label{e.kr2}
\end{equation}
where $k \, a_k \, b_k$ is the known Keller--Rubinow source term,
coming from the first-stage reaction, and $H_\eps$ is a smooth
non-decreasing approximation of the Heaviside graph with
$H_\eps (s) = H (s)$ for all $s<0$ and $s>\eps$.  This problem is
formulated as a fixed point problem for a map $\Gamma$
\cite[p.~119]{HilhorstHM:2009:MathematicalSO} which is shown to be
continuous and compact on a bounded subset $\Cc$ of the continuous
functions; existence is then a consequence of the Schauder fixed point
theorem.  Second, they let $\eps \to 0$ and extract a subsequence that
converges against a weak solution of the un-mollified version of
\eqref{e.kr2}, which corresponds to the original model of Keller and
Rubinow.  Finally, they take the fast reaction limit $k\to\infty$,
where the source term $k \, a_k \, b_k$ converges to the singular
source in \eqref{e.original.a} weakly in measure, and prove that the
corresponding sequence of Keller--Rubinow solutions has a converging
subsequence which limits to a weak solution of \eqref{e.original}.

Our goal is to enforce condition (P) across these two limits.  We
begin by modifying the second-stage reaction equation \eqref{e.kr2} to
\begin{equation}
  c_t = c_{xx} + k \, a_k \, b_k - c \, H_\eps
    \biggl( \int_0^{\min\{t,x^2/\alpha^2\}}
    \bigl(c(x,s) - u^* \bigr)_+ \, \d s \biggr) \,.
  \label{e.kr2p}
\end{equation}
The corresponding map $\Gamma$, even though it ceases to map into
$C^\infty$, remains compact from $\mathcal C$ into itself, the
relevant estimates remaining literally unchanged.  Likewise, the proof
of continuity is not affected by the change, so that the Schauder
fixed point argument applies as before.  As $\eps \to 0$, we extract a
subsequence that converges to the weak formulation of the un-mollified
version of \eqref{e.kr2p}.  The required estimates do not change and
the limit solution satisfies condition (P) by construction.

We finally reconsider the fast reaction limit
\cite[Theorem~2.7]{HilhorstHM:2009:MathematicalSO}.  The compactness
estimate remains unchanged, so that we can extract a subsequence $c_k$
which converges to a limit concentration $u$ strongly in the same
H\"older class as before.  In particular, the precipitation term
converges weakly in $L_{\text{loc}}^2(\R \times [0,T])$ to a
precipitation function $p(x,t)$ taking values in $[0,1]$ (note that
\cite{HilhorstHM:2009:MathematicalSO} use the symbol $\Chi$ in place
of $p$ here).  Moreover, $p$ is defined point-wise for every $x$, $p$
is non-decreasing in time as the limit of non-decreasing functions,
and satisfies condition (P) with
\begin{subequations}
  \label{e.modified-prec}
\begin{equation}
  p(x,t) = 1
  \qquad\text{if }
  \int_0^{^{\min\{t,x^2/\alpha^2\}}} (u(x,s) - u^*)_+ \, \d s > 0
\end{equation}
and
\begin{equation}
  p(x,t) = 0
  \qquad\text{if }
  u (x,s) < u^* \text{ for all } s \leq \min\{t,x^2/\alpha^2\} \,.
\end{equation}
\end{subequations}
The pair $(u,p)$ satisfies the weak form \eqref{weak.sol.def.eq} just
as in \cite{HilhorstHM:2009:MathematicalSO}.  It satisfies the
precipitation condition \eqref{e.hhmo-p-weak-alternative} on $D_u$ and
$\Pc$ via \eqref{e.modified-prec}.  Thus, it remains to verify that
the precipitation condition \eqref{e.hhmo-p-weak-alternative} is
satisfied on $D_o$ as well.  As $\psi$ is constant on $D_o$ and $p$ is
non-decreasing in time, a monotonicity argument, stated as
Lemma~\ref{u-psi.non-incr} below, implies that $u$ is non-increasing
in time on $D_o$; we note that the limit weak solution satisfies the
conditions of the lemma---we do not require
\eqref{e.hhmo-p-weak-alternative} to hold \emph{a priori}.  This
implies that \eqref{e.hhmo-p-weak-alternative} holds on $D_o$ as well
so that $(u,p)$ is a weak solution in the sense of
Definition~\ref{weak.sol.def}.
\end{proof}

\begin{remark}
This result does not imply that all weak solutions in the sense of
Definition~\ref{weak.sol.def} satisfy property (P), only that the
solution obtained via the modified limiting process satisfies property
(P).  Moreover, this argument does not say anything about uniqueness
of weak solutions satisfying property (P).  However, we can conclude
that non-uniqueness of solutions satisfying property (P) must
originate from differences in the precipitation function on $D_u$ or
on $\Pc$.
\end{remark}


We conclude this section with a collection of important auxiliary
results.  The first can be understood as a variation of the parabolic
maximum principle.

\begin{lemma}
\label{max-principle-sim}
Let $u$ be a weak solution to \eqref{e.original}.  Given two points
$(X,T)$ and $(x,t)$ in $D_u$ with $T>0$, $x>X$, and $t \leq T$, we
have
\begin{equation}
  \max_{s\in[0,T]}u(X,s) > u(x,t) \,.
\end{equation}
\end{lemma}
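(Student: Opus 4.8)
The statement has the flavor of a parabolic maximum principle, and the plan is to realize it as one on a carefully chosen semi-infinite space-time strip. The key geometric observation is that $(X,T) \in D_u$ forces $\alpha^2 T < X^2$; consequently, for every $y > X$ and every $s \in (0,T)$ we have $\alpha^2 s < \alpha^2 T < X^2 < y^2$, so that the open half-strip
\[
  \Omega = \{(y,s) \colon y > X, \; 0 < s < T\}
\]
is entirely contained in $D_u$ and, in particular, never meets the support $\Pc$ of the point source. The comparison point $(x,t)$ lies in the closure of $\Omega$ up to time $T$, since $x > X$ and $t \le T$.

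On $\Omega$ the source term is absent, so the first step is to argue that $u$ is a subsolution of the bare heat equation there. Writing $u = \psi + (u-\psi)$ and recalling that the self-similar profile $\Psi$ is smooth in $D_u$, so that $\psi_t = \psi_{xx}$ away from $\Pc$, the weak form \eqref{weak.sol.def.eq} shows that $u - \psi$ satisfies $(u-\psi)_t - (u-\psi)_{xx} = -p\,u$ in the weak sense on $\Omega$. Since $p \ge 0$ and $u \ge 0$, the right-hand side is nonpositive, so $u$ is a weak subsolution of the heat equation on $\Omega$, bounded by property~\ref{weak.ii}. The parabolic boundary of $\Omega$ consists of the lateral segment $\{X\} \times [0,T]$ together with the initial segment $[X,\infty) \times \{0\}$, on which $u = 0$ by \eqref{e.original.c}; the remaining ``boundary at infinity'' is harmless because $u$ is bounded. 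I would then invoke the maximum principle for bounded subsolutions of the heat equation on a half-strip (a Phragm\'en--Lindel\"of statement, valid without growth restrictions since $u$ is bounded) to conclude
\[
  u(x,t) \le \max \Bigl\{ \max_{s\in[0,T]} u(X,s), \; 0 \Bigr\} = \max_{s\in[0,T]} u(X,s),
\]
where the last equality holds because the right-hand maximum is at least $u(X,0) = 0$.

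To upgrade this to the strict inequality claimed, I would apply the strong maximum principle. Because $x > X$, the point $(x,t)$ lies off the parabolic boundary of $\Omega$, so if $u(x,t)$ equaled the maximum, then $u$ would be constant on the component of $\overline{\Omega} \cap \{s \le t\}$ joining $(x,t)$ to the initial line, forcing $u \equiv 0$ there. This contradicts the strict positivity of $u$ in the open first quadrant, which follows from the strictly positive, always-active source on $\Pc$ feeding $\Omega$ through diffusion (equivalently, from a Duhamel representation against the positivity-preserving Green's function of $\partial_t - \partial_{xx} + p$). Hence $\max_{s\in[0,T]} u(X,s) > u(x,t)$.

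The main obstacle is not the geometry but the low regularity: since the equation is posed weakly and $p$ is merely measurable, both the subsolution property and the strong maximum principle must be justified in a weak (or interior-regularized) sense. I expect the cleanest route is to observe that, away from $\Pc$ and the precipitation boundary, interior parabolic regularity renders $u$ classical, while across the measure-zero precipitation boundary the distributional inequality $u_t - u_{xx} \le 0$ persists; the strong maximum principle for bounded weak subsolutions then applies. Pinning down the genuine positivity of $u$ (as opposed to mere nonnegativity) is the one external ingredient needed to close the argument.
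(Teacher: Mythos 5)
Your proof is correct, and it reaches the conclusion by a route that differs from the paper's in its technical implementation, though both are parabolic maximum principle arguments on the source-free region to the right of $X$. The paper avoids the unbounded half-strip entirely: using Lemma~\ref{u.psi} ($u \le \psi$) together with the spatial decay of $\psi$, it picks a finite $X_1 > x$ so large that $\max_{s\in[0,T]} u(X_1,s) \le \psi(X_1,T) < \max_{s\in[0,T]} u(X,s)$, and then applies the standard maximum principle on the bounded rectangle $(X,X_1)\times(0,T)$; strictness follows because equality would force $u$ to be constant up to time $t$, contradicting the strict inequality engineered at the artificial right edge. You instead work directly on the half-strip $\{y>X\}\times(0,T)$, which requires a Phragm\'en--Lindel\"of version of the maximum principle (legitimately available since $u$ is bounded by Definition~\ref{weak.sol.def}\ref{weak.ii}), and you obtain strictness from the strong maximum principle plus the zero initial data and the strict positivity of $u$ for $t>0$. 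The trade-off: your argument does not need the pointwise bound $u\le\psi$ or the decay of $\psi$, only boundedness, whereas the paper's stays within the classical bounded-domain setting and needs no unbounded-domain extension; both arguments ultimately rest on the positivity of $u$, which is exactly Lemma~\ref{u.psi} of the paper (the paper needs it to ensure $\max_{s\in[0,T]}u(X,s)>0$ so that a suitable $X_1$ exists, you need it to rule out $u\equiv 0$), so the ``external ingredient'' you flag is already part of the paper's toolkit. Your closing caveat about weak regularity applies equally to the paper's proof and is handled the same way: $p$ is bounded and measurable, the region is source-free, so interior parabolic regularity (or the maximum principle for continuous weak subsolutions) makes the argument rigorous.
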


\begin{proof}
By Lemma~\ref{u.psi}, $u\le\psi$.  So we can find a point $X_1>x>X$
such that
\begin{equation}
  \max_{s\in[0,T]}u(X_1,s)\le\psi(X_1,T)<\max_{s\in[0,T]}u(X,s) \,.
  \label{e.strict1}
\end{equation}
We set $U=(X,X_1)\times(0,T)$ and denote its parabolic boundary by
$\Gamma$.  Since $D_u$ is free of sources, the maximum principle
implies
\begin{equation}
  u(x,t)\le\max_{\overline{U}} u = \max_{\Gamma} u = \max_{s\in[0,T]}u(X,s)
  \label{e.strict2}
\end{equation}
with equality if and only if $u\equiv u(x,t)$ on $[X,X_1]\times[0,t]$.
Thus, due to \eqref{e.strict1}, the inequality in \eqref{e.strict2}
must be strict.
\end{proof}

To proceed, we introduce some more notation. When $u^*<\Psi(\alpha)$,
we write $\alpha^*$ to denote the unique solution to
\begin{equation}
  \label{alpha.star}
  \Psi(\alpha^*)=u^* \,,
\end{equation}
where $\Psi$ is the precipitation-less solution given by equation
\eqref{self.similar.p0}, and we set
\begin{equation}
\label{d.star}
 D^* = \{(x,t) \colon 0<\alpha^*\sqrt t<x\} \,.
\end{equation}
We then recall two elementary properties of weak solutions whose
detailed proofs can be found in the papers cited.

\begin{lemma}[{\cite[Lemma~2]{DarbenasHO:2018:LongTA}}]
\label{u.psi}
A weak solution $(u,p)$ of \eqref{e.original} satisfies
$[u-\psi](x,0)=0$, $0<u\le\psi$ for $t>0$, and $p=0$ on $D^*$.
\end{lemma}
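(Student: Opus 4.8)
The plan is to establish the three assertions in turn, treating the nonlocal, $u$-dependent precipitation term $p$ simply as a given measurable coefficient with values in $[0,1]$; that is all the maximum-principle arguments require. For the initial condition, I would first note that for fixed $x>0$ one has $\psi(x,t)=\Psi(x/\sqrt t)\to\Psi(+\infty)=0$ as $t\to0^+$, since $\erfc(\eta/2)\to0$. Because the prescribed initial datum is $u(x,0)=0$ for $x>0$ by \eqref{e.original.c}, the trace of $u-\psi$ at $t=0$ vanishes on $\{x>0\}$; the regularity assumption \ref{weak.ii}, which guarantees that $u-\psi$ is continuous up to $t=0$ on all of $\R\times[0,T]$, then extends $[u-\psi](x,0)=0$ to $x=0$ by continuity. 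Note that $\psi$ itself is discontinuous at the origin, since $\psi(0,t)=\Psi(\alpha)\neq0$, but the difference $u-\psi$ is not.

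Next I would prove $0<u\le\psi$. Since $\psi$ solves the source problem without precipitation, the difference $w=u-\psi$ satisfies $w_t-w_{xx}=-p\,u$ in the weak sense, with vanishing initial trace and homogeneous Neumann condition at $x=0$; crucially the singular source on $\Pc$ cancels in $w$. To begin I would show $u\ge0$: writing the equation for $u$ as $u_t-u_{xx}+p\,u=\tfrac{\alpha\beta}{2\sqrt t}\,\delta(\cdot-\alpha\sqrt t)\ge0$, the zeroth-order coefficient $p\ge0$ has the sign required for the parabolic maximum principle, and with zero initial data this forces $u\ge0$. Then $-p\,u\le0$, so $w$ is a subsolution of the heat equation with zero initial and boundary data, whence $w\le0$, i.e.\ $u\le\psi$. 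For the strict lower bound $u>0$ when $t>0$, I would invoke the strong maximum principle: the fundamental solution of $\partial_t-\partial_{xx}+p$ with bounded nonnegative potential $p$ is strictly positive, and the source is a nonzero nonnegative measure supported on $\Pc$, so convolving it with this strictly positive kernel yields $u(x,t)>0$ for every $t>0$.

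Finally, $p=0$ on $D^*$ should follow from the upper bound together with the strict monotonicity of $\Psi$. On $D^*$ one has $x/\sqrt t>\alpha^*$, and since the supercritical assumption $u^*<\Psi(\alpha)$ forces $\alpha^*>\alpha$ while $\Psi$ is strictly decreasing on $(\alpha,\infty)$ through $\erfc$, it follows that $\psi(x,t)=\Psi(x/\sqrt t)<\Psi(\alpha^*)=u^*$. For $(x,t)\in D^*$ and $s\in(0,t]$ the point $(x,s)$ again lies in $D^*$ because $x/\sqrt s\ge x/\sqrt t>\alpha^*$, and as $\Psi$ is non-increasing the map $s\mapsto\psi(x,s)$ is largest at $s=t$; hence $\sup_{s\in[0,t]}u(x,s)\le\psi(x,t)<u^*$. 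The precipitation condition \eqref{e.hhmo-p-weak-alternative} then yields $p(x,t)=0$.

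I expect the strict positivity $u>0$ to be the main obstacle, as it is the only point that genuinely uses the singular delta source together with the strong—rather than weak—maximum principle; making the positivity of the parabolic fundamental solution rigorous in the presence of a measure-valued source and a merely $L^\infty$ coefficient $p$ requires the most care. The remaining ingredients—cancellation of the source in $w$, the favorable sign of the zeroth-order term, and the monotonicity of $\Psi$—are routine.
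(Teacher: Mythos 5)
The paper does not prove this lemma itself---it is imported verbatim from \cite[Lemma~2]{DarbenasHO:2018:LongTA}---so your argument has to stand on its own merits. For the second and third assertions it essentially does: freezing $p$ as a measurable coefficient with values in $[0,1]$, deriving $u\le\psi$ from the sign of the precipitation term, obtaining strict positivity by comparison with the fundamental solution of $\partial_t-\partial_{xx}+p$ applied to the nonnegative, nontrivial source measure, and getting $p=0$ on $D^*$ from $u\le\psi$, the monotonicity of $\Psi$, and \eqref{e.hhmo-p-weak-alternative} is the natural (and surely the intended) route. The technical care you flag for the maximum-principle steps is real but standard; the Duhamel representation \eqref{e.duhamel} is the tool that makes them rigorous, and it also lets you shortcut your two-step ordering: $w=u-\psi$ satisfies $w_t-w_{xx}+pw=-p\psi\le0$ weakly, which gives $u\le\psi$ directly, without first establishing $u\ge0$.

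The genuine gap is in the first assertion, and it is a circularity rather than a technicality. You justify the vanishing of the trace of $u-\psi$ at $t=0$ by saying the initial datum $u(x,0)=0$ is ``prescribed'' by \eqref{e.original.c}. But \eqref{e.original.c} belongs to the strong formulation only; Definition~\ref{weak.sol.def} imposes no pointwise initial condition whatsoever on $u$. In the weak setting the initial condition is encoded exclusively through item \ref{weak.iv}: test functions must vanish at $t=T$ but are unconstrained at $t=0$, so it is the absence of a boundary term at $t=0$ in \eqref{weak.sol.def.eq} that forces the trace of $u-\psi$ (which exists and is continuous by \ref{weak.ii}) to vanish. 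That is exactly the content of the claim $[u-\psi](x,0)=0$---if it were prescribed, the lemma would not need to state it. The repair is the standard one: integrate \eqref{weak.sol.def.eq} by parts in time (mollifying in time if necessary) to obtain $\int_\R\varphi(y,0)\,[u-\psi](y,0)\,\d y=0$ for every admissible $\varphi$, hence $[u-\psi](\cdot,0)\equiv0$. Note also that your later steps quietly reuse this fact: the bound $u\ge0$ needs zero initial data for $u$, and the $D^*$ argument needs $u(x,0)=0<u^*$ at $s=0$, so the circular step is load-bearing and must be fixed before the rest of your plan goes through.
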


\begin{lemma}
\label{u-psi.non-incr} 
Suppose that $p$ is a measurable, non-negative, bounded function and
suppose $(u,p)$ satisfies the properties of a weak solution to
\eqref{e.original} except perhaps for the precipitation condition,
Definition~\ref{weak.sol.def}\ref{weak.iii}.  Then the function
$u-\psi$ is non-increasing in $t$ on $\R\times[0,T]$.
\end{lemma}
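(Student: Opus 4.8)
Write $w = u - \psi$. The plan is to show that for every fixed $h > 0$ the time difference $W(x,t) := w(x,t+h) - w(x,t)$ is non-positive on $\R \times [0,T-h]$; since $h > 0$ and $t$ are arbitrary, this is exactly the assertion that $u - \psi$ is non-increasing in $t$.

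Three monotonicity facts power the argument. First, $\psi$ is non-decreasing in $t$: writing $\psi(x,t) = \Psi(x/\sqrt{t})$, the profile $\Psi$ is non-increasing (it is constant for $\eta \le \alpha$ and a positive multiple of $\erfc(\eta/2)$ for $\eta > \alpha$) while $x/\sqrt t$ is non-increasing in $t$, so the composition is non-decreasing in $t$ for each fixed $x$. Second, by hypothesis $p \ge 0$ and $p$ is non-decreasing in $t$, property~\ref{weak.3.5}. Third, $u \ge 0$ and $w \le 0$, and crucially neither fact uses the precipitation condition: $u$ satisfies $u_t - u_{xx} + p\,u = S$ weakly, where $S \ge 0$ is the moving source and $u(\cdot,0) = 0$, so the parabolic minimum principle for an operator with non-negative zeroth-order coefficient---applied after mollifying the singular source by smooth non-negative data, or via sign-preserving Picard iteration of the Duhamel formula---gives $u \ge 0$; inserting this into the Duhamel representation
\begin{equation*}
  w(x,t) = - \int_0^t \int_\R \Phi(x-y,t-s)\, p(y,s)\, u(y,s) \, \d y \, \d s \,,
\end{equation*}
whose integrand is non-negative, yields $w \le 0$. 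In particular $u = \psi + w$ and $p\,u$ are bounded.

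Next I would derive a differential inequality for $W$. Since the time-shift of a weak solution solves the shifted equation, subtracting the identity $w_t - w_{xx} = -p\,u$ evaluated at times $t+h$ and $t$, and using both $u(\cdot,t+h) - u(\cdot,t) = W + (\psi(\cdot,t+h) - \psi(\cdot,t))$ and the algebraic splitting of $p(\cdot,t+h)\,u(\cdot,t+h) - p(\cdot,t)\,u(\cdot,t)$ into $p(\cdot,t+h)\,[u(\cdot,t+h) - u(\cdot,t)] + [p(\cdot,t+h) - p(\cdot,t)]\,u(\cdot,t)$, one finds
\begin{equation*}
  W_t - W_{xx} + p(\cdot,t+h)\, W
  = - p(\cdot,t+h)\,\bigl( \psi(\cdot,t+h) - \psi(\cdot,t) \bigr)
    - \bigl( p(\cdot,t+h) - p(\cdot,t) \bigr)\, u(\cdot,t)
  \le 0 \,,
\end{equation*}
because each factor on the right is non-negative by the three facts above. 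Thus $W$ is a (weak) subsolution of the parabolic operator $\partial_t - \partial_{xx} + c$ with bounded, non-negative potential $c = p(\cdot,t+h)$; its initial datum is $W(\cdot,0) = w(\cdot,h) \le 0$, and $W$ is bounded because $w \in L^\infty$.

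Finally I would invoke the maximum principle. The sign condition $c \ge 0$ rules out a positive maximum in the parabolic interior, so it suffices to bound $W$ on the parabolic boundary: the initial trace is $\le 0$, and the behaviour as $|x| \to \infty$ is controlled by comparison with the supersolution $\eps\,(x^2 + 2t)$ followed by $\eps \downarrow 0$, a Phragm\'en--Lindel\"of argument legitimate because $W$ is bounded. This gives $W \le 0$, and the proof is complete. The two points needing care are the derivation of $u \ge 0$ under the \emph{weakened} hypotheses---the precipitation condition is unavailable here---and the use of the maximum principle at the stated regularity $C^{1,0}$; the latter is handled by noting that $p\,u \in L^\infty_{\mathrm{loc}}$, whence parabolic regularity places $w$, and hence $W$, in $W^{2,1}_{q,\mathrm{loc}}$ for every finite $q$, which suffices for the Aleksandrov--Bakelman--Pucci form of the maximum principle (alternatively one argues throughout with the weak formulation~\eqref{weak.sol.def.eq}).
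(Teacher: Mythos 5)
Your argument is correct, and its skeleton is the same one underlying the proof that the paper relies on: the paper does not reprove this lemma but states that the proofs of \cite[Lemma~3.3]{HilhorstHM:2009:MathematicalSO} and \cite[Lemma~8]{DarbenasHO:2018:LongTA} ``apply literally'' precisely because property~\ref{weak.iii} is never used in them. Those arguments, like yours, compare $w=u-\psi$ with its time translate and feed in the same three sign facts---$\psi$ non-decreasing in $t$, $p\ge 0$ non-decreasing in $t$ by property~\ref{weak.3.5}, and $u\ge 0$ established without the precipitation condition---through exactly the splitting of $p(\cdot,t+h)\,u(\cdot,t+h)-p(\cdot,t)\,u(\cdot,t)$ that you write down. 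Where you genuinely deviate is in the closing device: the cited proofs stay at the level of the weak formulation \eqref{weak.sol.def.eq}, testing the subtracted identity against the positive part of $W$ (with spatial cutoffs) and closing by an energy/Gronwall estimate, which never requires pointwise second derivatives; you instead upgrade regularity ($p\,u\in L^\infty_{\loc}$, hence $W\in W^{2,1}_{q,\loc}$) and conclude with a pointwise maximum principle plus a Phragm\'en--Lindel\"of barrier $\eps\,(x^2+2t)$. Both routes are sound: the weak-formulation route buys freedom from the regularity upgrade and the unbounded-domain barrier (though it needs cutoffs, since without property~\ref{weak.iii} one cannot assert that $p$ has bounded spatial support, so $W_+$ need not be integrable), while your route makes the mechanism---a bounded subsolution of an operator with non-negative potential and non-positive initial trace---more transparent. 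You also correctly isolate the one point where the weakened hypotheses genuinely matter, namely that $u\ge0$ and $w\le0$ must be rederived without condition~\ref{weak.iii}; your sketch is workable (for the sign-preserving iteration one should note that $\psi$ is a supersolution of the free heat equation, so that $0\le u_n\le\psi$ propagates from one iterate to the next), and this observation is exactly the content of the paper's remark that condition~\ref{weak.iii} can be dropped.
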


\begin{proof}
The proof stated in \cite[Lemma~3.3]{HilhorstHM:2009:MathematicalSO}
or \cite[Lemma~8]{DarbenasHO:2018:LongTA} for weak solutions applies
literally.  We note that Definition~\ref{weak.sol.def}\ref{weak.iii}
is not required in the proof and can be relaxed to the condition
stated.
\end{proof}

\begin{corollary} \label{c.ut-upper-bound}
There exists $C_\psi>0$ such that for every weak solution $(u,p)$,
\begin{equation}
  \esssup_{x \in \R} u_t(x,t)
  \leq \esssup_{x \in \R} \psi_t(x,t)
  \leq \frac{C_\psi}t \,.
  \label{e.ut-upper-bound}
\end{equation}
\end{corollary}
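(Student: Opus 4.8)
The plan is to reduce the bound on $u_t$ to the corresponding bound on the explicit profile $\psi$ by means of the monotonicity statement in Lemma~\ref{u-psi.non-incr}, and then to verify the bound on $\psi_t$ by a direct differentiation of \eqref{self.similar.p0}.

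For the first inequality, I would invoke Lemma~\ref{u-psi.non-incr}, by which $u-\psi$ is non-increasing in $t$. For each fixed $x$ the map $t\mapsto(u-\psi)(x,t)$ is then monotone, hence differentiable for almost every $t$ with a non-positive derivative; by Fubini's theorem this yields $(u-\psi)_t\le0$ for almost every $(x,t)$. Since $\psi$ is smooth away from the parabola $\Pc$, one may write $u_t=\psi_t+(u-\psi)_t\le\psi_t$ almost everywhere, and taking the essential supremum over $x$ gives the first inequality.

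For the second inequality, I would compute $\psi_t$ from the self-similar form $\psi(x,t)=\Psi(x/\sqrt t)$, obtaining $\psi_t=-\tfrac{x}{2t^{3/2}}\,\Psi'(x/\sqrt t)$. On $D_o$ the profile $\Psi$ is constant, so $\psi_t=0$ there; on $D_u$, differentiating \eqref{self.similar.p0} produces $\Psi'(\eta)=-\tfrac{\al\beta}{2}\,\e^{(\al^2-\eta^2)/4}$, whence $\psi_t=\tfrac1t\,g(\eta)$ with $g(\eta)=\tfrac{\al\beta\,\eta}{4}\,\e^{(\al^2-\eta^2)/4}$ and $\eta=x/\sqrt t$. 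As $g$ is continuous and positive on $[\al,\infty)$ and decays to zero as $\eta\to\infty$, it attains a finite maximum $C_\psi=\max_{\eta\ge\al}g(\eta)$ there, so that $\esssup_{x\in\R}\psi_t(x,t)=C_\psi/t$, which is the desired bound.

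The explicit calculation of $\psi_t$ and of the maximum of $g$ is routine. The only point demanding care---and the closest thing to an obstacle---is the interpretation and almost-everywhere existence of $u_t$: Definition~\ref{weak.sol.def}\ref{weak.ii} provides only $u-\psi\in C^{1,0}$, so no classical time derivative is available a priori, and the existence of $u_t$ together with the pointwise bound $u_t\le\psi_t$ must be extracted from the monotonicity of $u-\psi$ in $t$ rather than from any assumed regularity in time.
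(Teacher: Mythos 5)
Your proposal is correct and follows essentially the same route as the paper: the bound $u_t\le\psi_t$ a.e.\ is obtained from Lemma~\ref{u-psi.non-incr}, and the bound $\esssup_{x}\psi_t(x,t)\le C_\psi/t$ comes from the same direct self-similar computation (the paper writes it as $\frac{\alpha\beta}{4t}\,\e^{\alpha^2/4}\sup_{z}z\,\e^{-z^2/4}$, which is your $g$ maximized over all of $\R$ rather than over $[\alpha,\infty)$). Your added detail on extracting a.e.\ existence of $(u-\psi)_t$ from monotonicity via Lebesgue differentiation and Fubini is exactly the content the paper leaves implicit in the phrase ``Lemma~\ref{u-psi.non-incr} implies that $u_t\le\psi_t$ a.e.''
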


\begin{proof}
By direct computation, setting $z = x/\sqrt t$, we find that
\begin{equation}
  \esssup_{x \in \R} \psi_t(x,t)
  \leq \frac{\alpha\beta}{4t} \, \e^{\tfrac{\alpha^2}4} \,
       \sup_{z \in \R} z \, \e^{-\tfrac{z^2}4}
  \equiv \frac{C_\psi}t \,.  
\end{equation}
Lemma~\ref{u-psi.non-incr} implies that  $u_t\le\psi_t$ a.e., so the
claim is proved.
\end{proof}

\begin{corollary} \label{c.ut-int-upper-bound}
Let $(u,p)$ be a weak solution to \eqref{e.original}.  Then
\begin{equation}
  \int_0^t\int_{\R} \Phi(x-y,t-s) \,
    p(y,s) \, \psi_t(y,s)\,\d y\,\d s
  \leq \sqrt \pi \, \alpha^* \, C_\psi
\label{psi.der.t.int}
\end{equation}
for every $(x,t) \in \R\times\R_+$. 
\end{corollary}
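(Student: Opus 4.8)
The plan is to estimate the integral directly, exploiting three facts: the non-negativity together with the explicit upper bound on $\psi_t$, the confinement of the support of $p$ to a parabolic strip, and a crude but sufficient sup-bound on the heat kernel. First I would note that $\psi_t \ge 0$ almost everywhere: differentiating the self-similar profile \eqref{self.similar.p0} gives $\psi_t(x,t) = -\tfrac{x}{2 t^{3/2}} \, \Psi'(x/\sqrt t)$, which vanishes on $D_o$ (where $\Psi'=0$) and is positive on $D_u$ (where $\Psi'<0$), the parabola $\Pc$ being a Lebesgue-null set. Since $\Phi \ge 0$ and $0 \le p \le 1$, the integrand is non-negative and bounded above by $\Phi(x-y,t-s) \, \psi_t(y,s)$ on the support of $p$.

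Next I would localize in space. By Lemma~\ref{u.psi}, $p=0$ on $D^*$, and by the even symmetry of $p$ this forces $p(y,s)=0$ whenever $\abs y > \alpha^* \sqrt s$; hence for fixed $s$ the $y$-integration is restricted to the interval $[-\alpha^* \sqrt s, \alpha^* \sqrt s]$ of length $2 \alpha^* \sqrt s$. On this interval I would insert the bound $\psi_t(y,s) \le C_\psi/s$ supplied by Corollary~\ref{c.ut-upper-bound}, together with the elementary kernel estimate $\Phi(x-y,t-s) \le 1/\sqrt{4 \pi (t-s)}$, which holds uniformly in $x$ and $y$. Combining these reduces the double integral to
\[
  \int_0^t \frac{C_\psi}s \cdot 2 \alpha^* \sqrt s \cdot
  \frac1{\sqrt{4 \pi (t-s)}} \, \d s
  = \frac{\alpha^* \, C_\psi}{\sqrt \pi}
    \int_0^t \frac{\d s}{\sqrt{s \, (t-s)}} \,.
\]

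Finally I would evaluate the remaining one-dimensional integral; the substitution $s = t \sin^2 \theta$ gives $\int_0^t \d s/\sqrt{s(t-s)} = \pi$, independent of $t$, so the whole expression equals $\sqrt \pi \, \alpha^* \, C_\psi$, which is the claimed bound and is uniform in both $x$ and $t$. The computation is short, and the only genuinely delicate point is how to estimate the spatial integral of the heat kernel: the naive bound $\int_\R \Phi(x-y,t-s) \, \d y = 1$ would leave $\int_0^t C_\psi \, \d s/s$, diverging logarithmically at $s=0$. This is avoided precisely because the support of $p$ shrinks like $\sqrt s$ as $s \to 0$, producing the extra factor $\sqrt s$ that converts the $1/s$ singularity of $\psi_t$ into the integrable kernel $1/\sqrt{s(t-s)}$. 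Recognizing that the parabolic confinement of the precipitation region is what makes the estimate converge is therefore the crux of the argument.
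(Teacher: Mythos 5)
Your proposal is correct and follows essentially the same route as the paper's proof: both use Lemma~\ref{u.psi} to restrict the spatial integration to $\lvert y \rvert \le \alpha^* \sqrt s$, insert the bounds $\psi_t \le C_\psi/s$ from Corollary~\ref{c.ut-upper-bound} and $\Phi(x-y,t-s) \le 1/\sqrt{4\pi(t-s)}$, and evaluate $\int_0^t \d s/\sqrt{s(t-s)} = \pi$ by the substitution $s = t\sin^2\theta$. Your additional observations (the sign of $\psi_t$ and the remark that the parabolic confinement of $\supp p$ is what rescues the otherwise divergent $\int_0^t \d s/s$) are correct but not needed beyond what the paper states.
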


\begin{proof}
Using the right-hand bound of \eqref{e.ut-upper-bound} and recalling,
from Lemma~\ref{u.psi}, that $p=0$ on $D^*$, we find that the left
hand side of \eqref{psi.der.t.int} is bounded above by
\begin{equation}
  \int_0^{t}\frac{C_\psi}{s\sqrt{4\pi(t-s)}}
        \int_{-\alpha^*\sqrt s}^{\alpha^*\sqrt s} \d y\,\d s
     \notag \\
  = \frac{\alpha^* \, C_\psi}{\sqrt{\pi}}
      \int_0^t\frac{\d s}{\sqrt{(t-s)s}}
\end{equation}
By the change of variables $s=t \, \sin^2 s'$, the right hand integral
evaluates to $\pi$.
\end{proof}

\section{The ring domain}
\label{s.local-properties}

A substantial difficulty in the analysis in the HHMO-model is the
possibility that the precipitation function may take fractional values
on sets of positive measure.  On the other hand, Lemma~\ref{one.ring}
shows that at least initially, the HHMO-solution forms a proper ring,
i.e., the precipitation function takes binary values in some bounded
region of space-time.  In this section, we introduce the \emph{ring
domain} as the maximal set of the form $\R \times (0,T^*)$ on which
$p$ is essentially binary.  On the ring domain, we are able to obtain
an elementary characterization of the precipitation boundary: we shall
show that there exists a precipitation domain $I$ and precipitation
boundary function $\ell \colon I \to \R_+$ with certain ``nice''
properties such that the precipitation function is a.e.\ given by
\begin{equation}
  \label{non-decreasing.p}
  p(x,t)
  = \begin{cases}
      \I_{\{t>\ell(x)\}}(x,t) & \text{if $x\in I$}\\
      0 & \text{otherwise} \,.
    \end{cases} 
\end{equation}

For a given, fixed weak solution $(u,p)$ of the HHMO-model
\eqref{e.original}, we write $D_u$ as the union of three disjoint
subsets,
\begin{subequations}
\begin{gather}
  P = \{ (x,t) \in D_u \colon
         u(x,s)>u^*\text{ for some }s\in[0,t]\} \,, \\
  S = \{ (x,t) \in D_u \colon
         u(x,s)<u^* \text{ for all $s\in[0,t]$}\} \,, \\
\intertext{and}
  C = \{(x,t)\in D_u \colon \max_{s\in[0,t]}u(x,s) = u^* \} \,.
\end{gather}
\end{subequations}
The set $P$ is the precipitation set where we know that $p=1$.
Likewise, $S$ is a set where precipitation cannot occur and we know
that $p=0$.  By continuity of $u$, these two sets are open.  The set
$C$ is the critical set where the precipitation threshold is reached,
but not exceeded.  In our notion of weak solution, we cannot assign a
definitive value to $p$ on $C$ but, as we shall show now, $C$ is of
measure zero.  By definition, the sections of $S$, $C$, and $P$ are
strictly ordered, i.e., for fixed $x$,
\begin{equation}
  \{ t \colon (x,t) \in S \}
  < \{ t \colon (x,t) \in C \}
  < \{ t \colon (x,t) \in P \} \,.
  \label{e.ordering}
\end{equation}

\begin{lemma}[The critical subset of $D_u$ is a null set]
\label{p.lemma}
Assume that $(u,p)$ is a weak solution to \eqref{e.original}.  
Then
\begin{enumerate}[label={\upshape(\roman*)}]
\item\label{e.5}
$C \subset \partial P$ and $C \subset \partial S$,

\item\label{e.iiii} $C$ is a set of measure zero.
\end{enumerate}
\end{lemma}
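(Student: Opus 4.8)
The plan is to reduce both assertions to a single monotonicity property of the running maximum. For $(x,t)\in D_u$, set $M(x,t)=\max_{s\in[0,t]}u(x,s)$; this is continuous in $(x,t)$ and non-decreasing in $t$, and by the very definitions of $S$, $C$, $P$ a point lies in one of these three sets according to whether $M(x,t)$ is $<$, $=$, or $>$ than $u^*$. The key claim I would establish first is that for each fixed $t>0$ the map $x\mapsto M(x,t)$ is \emph{strictly decreasing} on the ray $\{x>\alpha\sqrt t\}$ along which $(x,t)\in D_u$. Once this is in hand, both parts of the lemma are essentially immediate.

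To prove the claim, I would fix $\alpha\sqrt t<X<x$ and let $s^*\in[0,t]$ be a time at which $u(x,\cdot)$ attains its maximum over $[0,t]$, so that $u(x,s^*)=M(x,t)$. Since Lemma~\ref{u.psi} gives $u>0$ for positive times while $u(x,0)=0$, the maximiser satisfies $s^*>0$, whence $(x,s^*)\in D_u$. Applying the comparison in Lemma~\ref{max-principle-sim} to the pair $(X,t)$ and $(x,s^*)$ (here $x>X$, $s^*\le t$, and both points lie in $D_u$) then yields $M(X,t)=\max_{s\in[0,t]}u(X,s)>u(x,s^*)=M(x,t)$, which is exactly the asserted strict monotonicity.

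For part~(i), suppose $(x_0,t_0)\in C$, so $M(x_0,t_0)=u^*$. By the claim, $M(x,t_0)>u^*$ for $x\in(\alpha\sqrt{t_0},x_0)$ and $M(x,t_0)<u^*$ for $x>x_0$ with $(x,t_0)\in D_u$; hence $(x,t_0)\in P$ just below $x_0$ and $(x,t_0)\in S$ just above $x_0$. Letting $x\to x_0$ from each side exhibits $(x_0,t_0)$ as a limit of points of $P$ and of $S$; since $P$ and $S$ are open and $(x_0,t_0)$ lies in neither, I conclude $(x_0,t_0)\in\partial P\cap\partial S$. For part~(ii), strict monotonicity makes $x\mapsto M(x,t)$ injective on each horizontal slice, so the slice $C^t=\{x\colon (x,t)\in C\}$ contains at most the single solution of $M(x,t)=u^*$ and therefore has one-dimensional measure zero for every $t>0$. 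As $M$ is continuous, $C=M^{-1}(\{u^*\})\cap D_u$ is Borel, and Tonelli's theorem gives $\lvert C\rvert=\int_0^\infty\lvert C^t\rvert\,\d t=0$.

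I do not expect a genuine obstacle here, since all the substance is carried by the comparison Lemma~\ref{max-principle-sim}. The only points requiring care are (a) checking that the temporal maximiser $s^*$ is interior, so that the moving point $(x,s^*)$ actually lies in $D_u$ and the lemma is applicable—this is the one place where I must invoke $u>0$ for positive times together with $u(x,0)=0$—and (b) the routine measurability bookkeeping justifying the slicing and Tonelli in part~(ii). Everything else is a direct consequence of the strict spatial monotonicity of the running maximum.
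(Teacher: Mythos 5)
Your proof is correct and follows essentially the same route as the paper: both arguments hinge on Lemma~\ref{max-principle-sim} to show that, on each horizontal slice of $D_u$, points to the left of a critical point lie in $P$ and points to the right lie in $S$, which simultaneously yields $C \subset \partial P \cap \partial S$ and at most one critical $x$ per time level, hence measure zero. Your packaging via strict spatial monotonicity of the running maximum $M(x,t)$, and your explicit check that the temporal maximizer is interior so that Lemma~\ref{max-principle-sim} applies, are just slightly more detailed renderings of the paper's own argument.
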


\begin{proof}
Let $(x,t) \in C$.  Then there exists $s \in (0,t]$ such that
$u(x,s) = u^*$.  By Lemma~\ref{max-principle-sim}, any point
$(X,s) \in D_u$ with $X<x$ has $\max_{s' \in [0,s]} u(X,s') > u^*$ so
that $(X,t) \in P$.  The same argument shows that $(X,t) \in S$ if
$X>x$.  Thus, $(x,t)$ is a limit point of $P$ and of $S$, which proves
\ref{e.5}.  The argument further shows that for every $t$ there is at
most one value of $x$ such that $(x,t) \in C$.  This proves
\ref{e.iiii}.
\end{proof}

The argument used in the proof of Lemma~\ref{p.lemma} cannot be
extended to critical subsets $\{(x,t) \colon u(x,t) = u^*\}$ on or
above the parabola $\Pc$.  In that case, the precipitation pattern may
be topologically complex and/or essentially non-binary.  Thus, in the
remainder of the paper we restrict ourselves to the \emph{ring
domain}, defined as follows, on which such degeneracies are not
possible.

\begin{definition}
\label{ring.domain}
We shall say that the solution $(u,p)$ to \eqref{e.original} has a \emph{ring domain}
\begin{equation}
  \RD(u)=\R\times (0,(X^*/\alpha)^2)
\end{equation}
with $X^*\in(0,+\infty]$ if there exist a strictly increasing
sequence, finite with $0=X_0<X_1<X_2<\ldots<X_n=X^*$, $n\ge1$, or infinite
with $0=X_0<X_1<X_2<\ldots<X_n<\ldots<X^*$ and
$\lim_{i\to\infty}X_i=X^*$, such that 
\begin{enumerate}[label={\upshape(\roman*)}] 
\item $[X_{2i},X_{2i+1}]$ is a ring for all applicable indices $i$,
\item $[X_{2i+1},X_{2i+2}]$ is an interring for all applicable indices
$i$,
\item when the sequence $\{X_i\}$ is finite, the interval
$[X^*,X^*+\xi]$ is neither a ring nor an interring for every
$\xi>0$.
\end{enumerate}
\end{definition}

\begin{remark}
\label{RD.indep}
Weak solutions to the HHMO-model are essentially determined by the
field $u$ alone \cite[Lemma~3]{DarbenasHO:2018:LongTA}.  This
justifies writing $\RD(u)$ instead of $\RD(u,p)$.  Below, when no
ambiguity can occur, we will often write $\RD$ for short.
\end{remark}

When the precipitation threshold is supercritical, i.e., when
$u^*<\Psi(\alpha)$, Lemma~\ref{one.ring} ensures that an initial
precipitation ring always exists, so that we can construct a
non-trivial ring domain iteratively.

We now introduce notation for three distinct parts of the
precipitation boundary,
\begin{subequations}
\begin{gather}
  \Lambda_\reg = \{ (x,t) \in C \colon (x,s) \notin C
    \text{ for } s<t \} \,, \\
  \Lambda_\deg = \{ (x,t) \in \Pc \colon x
    \text{ is contained in a ring and } (x,s) \notin C
    \text{ for } s<t\} \,, \label{e.Lambda-deg}
\intertext{and}
  \Lambda_\jump = C \setminus \Lambda_\reg \,. 
\end{gather}
\end{subequations}
We remark that, by continuity of $u$, if a line $x = \text{const}$
intersects $C$, it also intersects $\Lambda_\reg$, precisely at the
smallest value of $t$ where $\max_{s\in[0,t]}u(x,s)=u^*$.

Numerical evidence indicates that $\Lambda_\jump$ is empty and
$\Lambda_\deg$ consists only of the boundary points of $\Lambda_\reg$.
On the other hand, we have no proof that this is so.  Moreover, we
think that modifications of the model such as the addition of
non-singular loss or source terms may well create degenerate parts or
jumps in the precipitation boundary.  For this reason, we allow for
the occurrence of all three boundary components.
Figure~\ref{f.boundarysketch} illustrates the notation introduced in a
made-up sketch; we emphasize that actual numerical simulations look
different (cf.\ \cite{DarbenasO:2021:BreakdownLP}).

\begin{figure}
\centering
\includegraphics{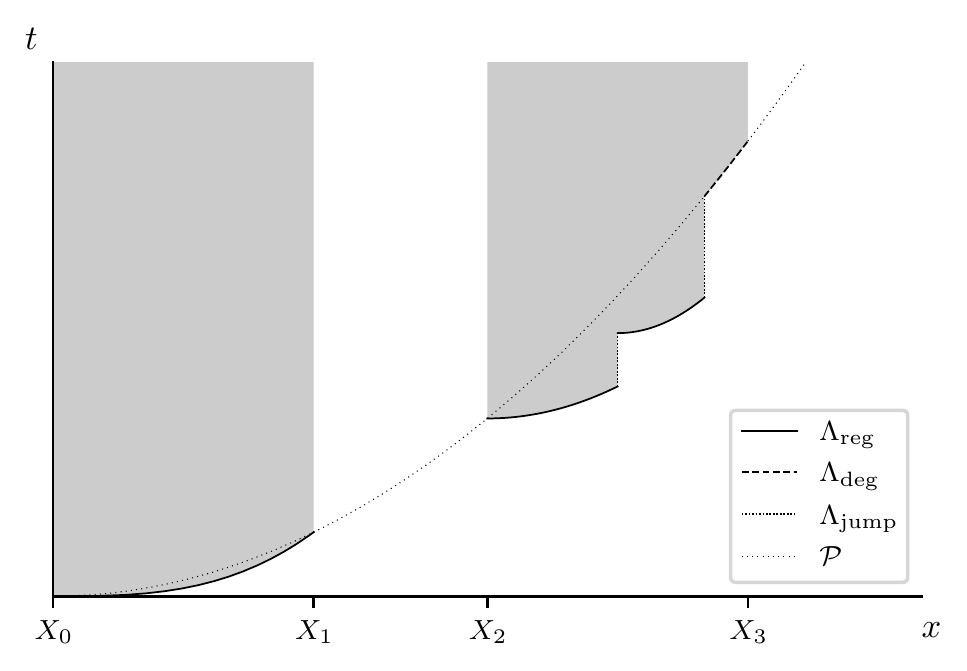}
\caption{Sketch of the three boundary components referred to in this
paper; we set $\Lambda_\normal = \Lambda_\reg \cup \Lambda_\deg$.}
\label{f.boundarysketch}
\end{figure}

To proceed, set
\begin{gather}
  \Lambda_\normal = \Lambda_\reg \cup \Lambda_\deg 
\end{gather}
and let $I$ denote the closed union of the $x$-projection of the
precipitation rings, i.e.,
\begin{gather}
  I = \bigcup_i \, [X_{2i}, X_{2i+1}] \,.
\end{gather}
By construction, whenever $x \in I$, there exists a unique $t \geq 0$
such that either $(x,t) \in \Lambda_\reg$ or $(x,t) \in \Lambda_\deg$.
Hence, we can parametrize onset of precipitation in time with a
function $\ell \colon I \to \R_+$, the \emph{precipitation front},
satisfying
\begin{equation}
  \label{l.function}
  \Lambda_\normal = \{ (x, \ell(x)) \colon x \in I \} \,.
\end{equation}

\begin{lemma}
\label{I.l.prop}
Let $(u,p)$ be a weak solution to \eqref{e.original} with ring domain
$\RD$.  Then
\begin{enumerate}[label={\upshape(\roman*)}] 
\item\label{i.prop.1} When $x\in I$, $u(x,\ell(x))=u^*$ and
$u(x,t) < u^*$ for all $t \in [0,\ell(x))$.

\item\label{i.prop.3} $\ell$ is strictly increasing and
left-continuous on $I$,

\item\label{i.prop.3a} $\ell$ is right-continuous at every $X_{2j}$
with $\ell(X_{2j})=(X_{2j}/\alpha)^2$.
\end{enumerate}
\end{lemma}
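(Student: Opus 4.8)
The plan is to read off all three assertions from a single observation: for $x\in I$ the value $\ell(x)$ is exactly the first-passage time of $u(x,\cdot)$ to the level $u^*$. For part (i), note that both defining conditions, for $\Lambda_\reg$ and for $\Lambda_\deg$, require $(x,s)\notin C$ for every $s<\ell(x)$. I would first argue this forces $(x,s)\in S$ for all such $s$: if instead $(x,s_0)\in P$ for some $s_0<\ell(x)$, then $u(x,\cdot)$ exceeds $u^*$ somewhere on $[0,s_0]$, and since $u(x,0)=0<u^*$ the intermediate value theorem produces an earlier point of $C$, contradicting $(x,s)\notin C$ for $s<\ell(x)$. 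Hence $u(x,t)<u^*$ for all $t<\ell(x)$, which is the second half of (i). For the equality $u(x,\ell(x))=u^*$ I distinguish the two cases. If $(x,\ell(x))\in\Lambda_\reg\subset C$, then $\max_{s\in[0,\ell(x)]}u(x,s)=u^*$, and since every earlier value is strictly below $u^*$ the maximum is attained at $\ell(x)$. If $(x,\ell(x))\in\Lambda_\deg$, then $\ell(x)=x^2/\alpha^2$ and, because $x$ lies in a ring, the Remark following Definition~\ref{ring} gives $\max_{s\in[0,\ell(x)]}u(x,s)\ge u^*$; combined with $u(x,t)<u^*$ for $t<\ell(x)$ and continuity of $u$, this again forces $u(x,\ell(x))=u^*$.

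For part (ii), write $M(x,t)=\max_{s\in[0,t]}u(x,s)$ and use Lemma~\ref{max-principle-sim} in the form: for $x_1<x_2$ and $T<x_1^2/\alpha^2$ (so that $(x_1,T)$ and all $(x_2,t)$ with $t\le T$ lie in $D_u$) one gets $M(x_1,T)>M(x_2,T)$ by maximizing over $t$. Since $M(x_1,T)<u^*$ whenever $T<\ell(x_1)$, this yields $M(x_2,T)<u^*$, hence $\ell(x_2)\ge\ell(x_1)$, so $\ell$ is non-decreasing. To make this strict I assume $\ell(x_1)=\ell(x_2)=:T_0$ and seek a contradiction. If $T_0<x_1^2/\alpha^2$, applying the lemma to $(x_1,T_0),(x_2,T_0)\in D_u$ gives $u^*=M(x_1,T_0)>u(x_2,T_0)=u^*$. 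If $T_0=x_1^2/\alpha^2$, I interpose $x'\in(x_1,x_2)$; then $(x',T_0)\in D_u$, the lemma applied to $(x',T_0),(x_2,T_0)$ gives $M(x',T_0)>u(x_2,T_0)=u^*$, whereas applied to $(x_1,T),(x',T)$ for $T<T_0$ it gives $u(x',t)<M(x_1,T)<u^*$ for all $t<T_0$, so $M(x',T_0)\le u^*$ by continuity, a contradiction. Left-continuity is then easy: for $x_n\uparrow x$ in $I$, monotonicity gives $\ell(x_n)\uparrow L\le\ell(x)$, while continuity of $u$ and $u(x_n,\ell(x_n))=u^*$ force $u(x,L)=u^*$, which by part (i) is impossible unless $L=\ell(x)$.

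For part (iii), the squeeze $\ell(x)\le x^2/\alpha^2$ (since $\Lambda_\reg\subset D_u$ and $\Lambda_\deg\subset\Pc$) does most of the work. To prove $\ell(X_{2j})=X_{2j}^2/\alpha^2$ for $j\ge1$ I argue by contradiction: if $\ell(X_{2j})<X_{2j}^2/\alpha^2$ the point is regular and $u(X_{2j},\ell(X_{2j}))=u^*$ is attained inside $D_u$; taking $x<X_{2j}$ in the adjacent interring, where $p=0$ a.e.\ and hence $\max_s u(x,s)\le u^*$, Lemma~\ref{max-principle-sim} applied to $(x,\ell(X_{2j}))$ and $(X_{2j},\ell(X_{2j}))$ would give $M(x,\ell(X_{2j}))>u^*$, contradicting the interring property. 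Once the degeneracy $\ell(X_{2j})=X_{2j}^2/\alpha^2$ is known, right-continuity is immediate: monotonicity gives $\lim_{x\to X_{2j}^+}\ell(x)\ge\ell(X_{2j})$, while $\ell(x)\le x^2/\alpha^2\to X_{2j}^2/\alpha^2$ forces the reverse inequality. The endpoint $j=0$ (the origin) is special and follows directly from $\ell(0)\le 0$; I would treat it as a degenerate endpoint, reading the identity $u(x,\ell(x))=u^*$ of part (i) for $x>0$ only.

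The delicate step, where I expect the main difficulty, is the strict monotonicity at a degenerate boundary point: there $(x_1,\ell(x_1))$ lies on $\Pc$ rather than in the open set $D_u$, so Lemma~\ref{max-principle-sim} cannot be applied at that point, and one must interpose an auxiliary abscissa $x'\in(x_1,x_2)$ and combine two applications of the comparison principle. The accompanying bookkeeping of which points lie in $D_u$ versus on $\Pc$, together with the separate handling of the origin, are the places most prone to error.
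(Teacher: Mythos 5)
Your proposal is correct and takes essentially the same route as the paper: part (i) via the first-hitting characterization plus continuity (the paper's neighborhood contradiction for $\Lambda_\deg$ is exactly the content of the Remark you cite), part (ii) via Lemma~\ref{max-principle-sim} (your interposed abscissa $x'$ at a degenerate point spells out what the paper compresses into ``the argument of Lemma~\ref{p.lemma} applies literally''), and part (iii) via the same contradiction followed by the squeeze $\ell(X_{2j})\le\ell(x)\le x^2/\alpha^2$. The only fix needed is in (iii): the interring point must additionally satisfy $x>\alpha\sqrt{\ell(X_{2j})}$ so that $(x,\ell(X_{2j}))\in D_u$ and Lemma~\ref{max-principle-sim} is applicable---such $x$ exist precisely because $\ell(X_{2j})<(X_{2j}/\alpha)^2$ is the standing assumption, which is why the paper restricts to $x\in(\alpha\sqrt{\ell(X_{2j})},X_{2j})$.
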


\begin{proof}
For $(x,\ell(x)) \in \Lambda_\reg$, statement \ref{i.prop.1} holds by
definition of $\Lambda_\reg \subset C$.

If $(x,\ell(x)) \in \Lambda_\deg$, we argue by contradiction.  First,
suppose $u(x,\ell(x))<u^*$.  Then there exists a neighborhood of
$\{x\}\times[0, x^2/\alpha^2]$ on which $u<u^*$ which carves out a
part of $\Pc$ that contains $(x, \ell(x))$.  Thus, $x$ is not
contained in a ring, contradicting the definition of $\Lambda_\deg$.
Else, if $u(x,\ell(x))>u^*$, there exists a neighborhood of
$(x,\ell(x))$ on which $u>u^*$.  Thus, by continuity, there exists
$t<\ell(x)$ such that $(x,t) \in \Lambda_\reg$, again contradicting
the definition of $\Lambda_\deg$.

For \ref{i.prop.3}, we first note that the argument used in the proof
of Lemma~\ref{p.lemma} applies literally and proves that $\ell$ is
strictly increasing.  Further, it is bounded, so possesses a right
limit at every point that is not a left boundary point.  Taking
$x\in I$ with $x \neq X_{2j}$ and setting
$\ell^*=\lim_{y\nearrow x}\ell(y)$, we have, by continuity of $u$,
\begin{equation}
  u(x,\ell^*)=\lim_{y\nearrow x}u(y,\ell(y))=u^* \,.
\end{equation}
This shows that $(x,\ell^*)\notin S$ so that, due to the ordering
\eqref{e.ordering}, we have $\ell^*\geq \ell(x)$.  On the other hand,
as $\ell$ is increasing, $\ell^* \leq \ell(x)$.  This proves that
$\ell^*=\ell(x)$, i.e., $\ell$ is left-continuous on $I$.

To prove \ref{i.prop.3a}, note that $0\le \ell(x)\le x^2/\alpha^2$ for
$x\in I$, so $\ell(0)=0$.  For $j>0$, suppose that
$\ell(X_{2j})<(X_{2j}/\alpha)^2$.  Then, due to
Lemma~\ref{max-principle-sim}, the precipitation condition
\eqref{e.hhmo-p-weak-alternative} is satisfied for
$x\in(\alpha\sqrt{\ell(X_{2j})},X_{2j})$, i.e.
\begin{equation}
  \max_{s\in[0,\ell(X_{2j})]}u(x,s)>u(X_{2j},\ell(X_{2j}))=u^* \,.
\end{equation}
Thus, the $j$th ring must start no farther than
$x=\alpha\sqrt{\ell(X_{2j})}$, contradiction. Thus,
$\ell(X_{2j})=(X_{2j}/\alpha)^2$ and, since $\ell$ is increasing,
$\ell(X_{2j}) \le \ell(x) \le x^2/\alpha^2$ for $x\ge X_{2j}$.  Thus
$\ell$ is right-continuous at this point.
\end{proof}

\begin{lemma}
\label{i.prop.7} 
Let $(u,p)$ be a weak solution to \eqref{e.original} with ring domain
$\RD$.  On $\RD$, $p$ can be identified, up to modification on sets
of measure zero, with
\begin{equation}
\label{p.redefined}
  p(x,t)
  = \begin{cases}
      \I_{\{t>\ell(x)\}}(x,t) & \text{if $x\in I$}\\
      0 & \text{otherwise} \,.
    \end{cases} 
\end{equation}
\end{lemma}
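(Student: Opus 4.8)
\textit{Approach.} Since \eqref{p.redefined} is an a.e.\ identity, it suffices to pin down $p$ up to null sets on each natural piece of the ring domain, using three inputs: the precipitation condition \eqref{e.hhmo-p-weak-alternative}, the pointwise time-monotonicity of $p$ (Definition~\ref{weak.sol.def}\ref{weak.3.5}), and the pointwise description of $u$ along rings from Lemma~\ref{I.l.prop}\ref{i.prop.1}. By even symmetry I restrict to $x\ge0$ and write $T^*=(X^*/\alpha)^2$. The half-line splits into the precipitation projection $I$, the open interrings $(X_{2i+1},X_{2i+2})$, and the tail $(X^*,\infty)$; the countably many breakpoints $X_i$ form a null set in $x$ and are discarded.

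For $x\in I$, Lemma~\ref{I.l.prop}\ref{i.prop.1} gives $u(x,s)<u^*$ on $[0,\ell(x))$, so $\max_{s\in[0,t]}u(x,s)<u^*$ for every $t<\ell(x)$ and \eqref{e.hhmo-p-weak-alternative} forces $p(x,t)=0$ pointwise there. For $t>\ell(x)$ I split according to $\Lambda_\normal$. If $(x,\ell(x))\in\Lambda_\reg\subset C\subset D_u$ then $\ell(x)<x^2/\alpha^2$, the ordering \eqref{e.ordering} places $\{x\}\times(\ell(x),x^2/\alpha^2)$ in $P$ where $p=1$, and time-monotonicity extends $p(x,t)=1$ to all $t>\ell(x)$. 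If $(x,\ell(x))\in\Lambda_\deg\subset\Pc$ then $\ell(x)=x^2/\alpha^2$, so $x$ lies in a ring $[X_{2i},X_{2i+1}]$ and Definition~\ref{ring} makes $\{(y,s)\colon y\in[X_{2i},X_{2i+1}],\,\alpha^2 s\ge y^2,\,p(y,s)<1\}$ null; hence $p(x,t)=1$ for a.e.\ $(x,t)$ with $t\ge\ell(x)$. Either way $p(x,\cdot)=\I_{\{t>\ell(x)\}}$ off a null set.

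On an open interring, Definition~\ref{interring} says precisely that $\{(y,s)\colon y\in[X_{2i+1},X_{2i+2}],\,p(y,s)>0\}$ is null, giving $p=0$ a.e.\ as required. For the tail, every $(x,t)$ with $x>X^*$ and $t<T^*$ has $t<T^*\le x^2/\alpha^2$, hence lies in $D_u$. In the infinite case I argue directly: for $t<T^*$ choose a ring start $X_{2j}<X^*$ with $\ell(X_{2j})=(X_{2j}/\alpha)^2>t$ (possible as $X_{2j}\nearrow X^*$, using Lemma~\ref{I.l.prop}\ref{i.prop.3a}); then $\max_{s\in[0,t]}u(X_{2j},s)<u^*$ by Lemma~\ref{I.l.prop}\ref{i.prop.1}, and Lemma~\ref{max-principle-sim} with $(X,T)=(X_{2j},t)$, applied to each $(x,s_0)$ with $s_0\le t$, gives $u(x,s_0)<u^*$. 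Thus $(x,t)\in S$ and $p(x,t)=0$.

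The finite case is the crux, since then the rings need not drive $\ell$ up to $T^*$ and the direct comparison fails for $t$ near $T^*$. I argue by contradiction: if $p(x,t)\neq0$ for some $x>X^*$, $t<T^*$, then $(x,t)\notin S$, so there is $s^*\le t$ with $u(x,s^*)\ge u^*$, and Lemma~\ref{max-principle-sim} propagates this to $\max_{s\le s^*}u(X',s)>u^*$, i.e.\ $(X',s^*)\in P$, for every $X'\in(\alpha\sqrt{s^*},x)$; since $\alpha\sqrt{s^*}<X^*$, this interval meets both the terminal band and $(X^*,x)$, and time-monotonicity makes $p=1$ on a positive-measure subset lying above $\Pc$ in each. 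If the terminal band is an interring, precipitation now occupies a positive-measure set there, contradicting Definition~\ref{interring}; if it is a ring, extending its $x$-interval past $X^*$ by any small $\eps_2>0$ still meets $\{p<1\}$ only on a null set, contradicting Definition~\ref{ring}. Hence no such $(x,t)$ exists and $p=0$ on the tail. Assembling the four regions and discarding the null sets collected above yields \eqref{p.redefined}; the tail in the finite case is the only genuinely delicate step, the remaining pieces being determined mechanically by the precipitation condition, the $S<C<P$ ordering, monotonicity, and the ring/interring null-set properties.
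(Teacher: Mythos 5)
Your sectioning architecture (fix $x$; treat $x\in I$, the open interrings, the breakpoints, and the tail $x>X^*$ separately) is workable, and most pieces are correct; in particular your finite-case tail argument, combining Lemma~\ref{max-principle-sim} with the extension clauses of Definitions~\ref{ring} and~\ref{interring}, is a legitimate and more explicit treatment of a point the paper's proof passes over tersely. However, there is a genuine gap in your $\Lambda_\reg$ case. You claim that the ordering \eqref{e.ordering} places the whole segment $\{x\}\times(\ell(x),x^2/\alpha^2)$ in $P$. The ordering only says that every $S$-time precedes every $C$-time, which precedes every $P$-time; it does \emph{not} say that the section $\{t\colon(x,t)\in C\}$ is a single point. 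Since $t\mapsto\max_{s\in[0,t]}u(x,s)$ is continuous and non-decreasing, this section can be a nontrivial interval $[\ell(x),c(x)]$; its points beyond $\ell(x)$ constitute exactly $\Lambda_\jump$, which the paper explicitly cannot rule out. On such an interval, \eqref{e.hhmo-p-weak-alternative} permits any value of $p$ in $[0,1]$ consistent with time-monotonicity (e.g.\ $p(x,\cdot)\equiv\tfrac12$ on $(\ell(x),c(x))$; if $c(x)=x^2/\alpha^2$ this can even persist for all $t>\ell(x)$, since by Lemma~\ref{u-psi.non-incr} $u$ is non-increasing in time on $D_o$, so the running maximum never exceeds $u^*$ and nothing above the parabola forces $p=1$ pointwise at that single $x$). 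Hence your pointwise claim ``$p(x,t)=1$ for all $t>\ell(x)$'' is unjustified for such $x$, and for such $x$ the one-dimensional section on which \eqref{p.redefined} fails has positive measure, so you cannot simply discard it.

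The gap is repairable, but the repair is precisely the ingredient you never invoke: Lemma~\ref{p.lemma}. Either note that $C$ is a two-dimensional null set, so by Fubini almost every $x$ has a null $C$-section; or use the fact, established in the proof of Lemma~\ref{p.lemma}, that each time level meets $C$ in at most one point, so the $C$-intervals of distinct ``bad'' $x$ are pairwise disjoint, making the set of bad $x$ countable. Either way the exceptional $x$ form a null set and your a.e.\ identity survives. This is essentially how the paper proceeds globally: it covers $\RD$ by $D_o\cap\RD$ (where the ring/interring structure fixes $p$ a.e.), $P$ (where $p=1$), and $S$ (where $p=0$), and disposes of the remainder $C\cup\Pc\cup\{x=0\}$ in one stroke by Lemma~\ref{p.lemma}. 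Your route is more granular and pays off in the tail region, but without the nullity of $C$ it cannot close the $\Lambda_\reg$ case.
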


\begin{proof}
On $D_o \cap \RD$, the value of $p$ is determined a.e.\ by the
definition of ring domain as a sequence of rings and interrings and
agrees with \eqref{p.redefined}.  On $P$ and $S$, $p$ takes values $1$
and $0$, respectively.  Due to the ordering \eqref{e.ordering} and the
definition of $\ell$, these values also agree with
\eqref{p.redefined}.  This already suffices, because, by
Lemma~\ref{p.lemma}, the three sets $D_o \cap \RD$, $P$, and $S$ cover
the ring domain up to sets of measure zero (those being $C$, $\Pc$,
and the line $\{x=0\}$).
\end{proof}

\begin{corollary}
\label{c.point-off-lambda-normal}
In the setting of Lemma~\ref{i.prop.7}, let
$(x,t) \in \RD \setminus \Lambda_\normal$.  Then there exists a
rectangular neighborhood
$B=(x_1,x_2) \times (t_1,t_2) \subset \RD \setminus \Lambda_\normal$
of $(x,t)$ such that $p(y,s) = p^*(y)$ for all $(y,s) \in B$.
\end{corollary}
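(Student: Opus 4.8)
The plan is to work throughout with the canonical representative of $p$ furnished by Lemma~\ref{i.prop.7}, so that $p(y,s)=\I_{\{s>\ell(y)\}}$ for $y\in I$ and $p(y,s)=0$ for $y\notin I$. With this representative, the assertion $p(y,s)=p^*(y)$ on $B$ is equivalent to saying that $p$ is \emph{independent of the time variable} on $B$; the resulting spatial profile $p^*$ then coincides with the function from property~(P) on any part of $B$ lying above the parabola $\Pc$ (on a part of $B$ lying in $S$ it is simply $0$). Since $\Lambda_\normal=\{(x,\ell(x))\colon x\in I\}$ is exactly the graph of $\ell$, the task reduces to placing an open rectangle around $(x,t)$ that misses this graph and meets every vertical segment $\{y\}\times(t_1,t_2)$ entirely on one side of $\ell(y)$ (or at a $y\notin I$).

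First I would dispose of the points lying in an open region. If $(x,t)\in P$ or $(x,t)\in S$, then, since $P$ and $S$ are open and $\Lambda_\normal\subset C\cup\Pc$ is disjoint from them, any sufficiently small rectangle inside $P$ (resp.\ $S$) works, with $p\equiv 1$ (resp.\ $p\equiv 0$). If $(x,t)\in D_o\cap\RD$, a small rectangle stays in $D_o$, where $s>y^2/\alpha^2\ge\ell(y)$ forces $p(y,s)=\I_{\{y\in I\}}$, and $D_o$ is disjoint from $\overline{D_u}\supset\Lambda_\normal$. Finally, if $x\notin I$, closedness of $I$ yields a vertical strip $(x_1,x_2)\times\R_+$ disjoint from $I$, on which $p\equiv 0$ and which misses $\Lambda_\normal$ entirely.

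The remaining points have $x\in I$ and $t\neq\ell(x)$, lying in $C\setminus\Lambda_\reg=\Lambda_\jump$ or on $\Pc\setminus\Lambda_\deg$; here I would build $B$ by hand from the strict monotonicity and left-continuity of $\ell$ (Lemma~\ref{I.l.prop}\ref{i.prop.3}). If $t<\ell(x)$, choose $t_2\in(t,\ell(x))$; monotonicity gives $\ell(y)\ge\ell(x)>t_2$ for $y\ge x$, while left-continuity gives $\ell(y)>t_2$ for $y$ just below $x$, so on a small rectangle $\ell>t_2$ throughout and $p\equiv 0$. If $t>\ell(x)$, choose $t_1\in(\ell(x),t)$; then $\ell(y)\le\ell(x)<t_1$ for all $y\le x$ in $I$, so the left half carries $p\equiv 1$. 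On the right I would compare $t$ with the right limit $\ell(x^+)=\lim_{y\searrow x}\ell(y)\ge\ell(x)$ (if $x$ is a right endpoint of a ring, the points $y>x$ lie in an interring, $p\equiv 0$, and nothing is to check): if $\ell(x^+)>t$, a choice $t_2\in(t,\ell(x^+))$ gives $\ell(y)\ge\ell(x^+)>t_2$ and $p\equiv 0$ on the right; if $\ell(x^+)<t$, enlarging $t_1$ past $\ell(x^+)$ gives $\ell(y)<t_1$ and $p\equiv 1$ on the right. In each case the graph of $\ell$ stays strictly above $t_2$ or strictly below $t_1$ across the whole $x$-window, so $B$ simultaneously avoids $\Lambda_\normal$ and carries a time-independent $p$.

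The hard part will be the single degenerate configuration left open above, namely $t=\ell(x^+)>\ell(x)$, i.e.\ $t$ equal to the right limit of $\ell$ at a jump. Then there are $y_n\searrow x$ with $\ell(y_n)\to t$, so $(x,t)$ is an accumulation point of $\Lambda_\normal$ that does not itself lie on $\Lambda_\normal$, and \emph{no} rectangle around it can avoid the graph. The crux is therefore to show that this cannot occur for $(x,t)\in\RD\setminus\Lambda_\normal$, which amounts to excluding jumps of $\ell$ altogether. I expect this from the following: continuity of $u$ together with $u(y,s)<u^*$ for $s<\ell(y)$ (Lemma~\ref{I.l.prop}\ref{i.prop.1}) gives $u(x,\ell(x^+))=\lim_n u(y_n,\ell(y_n))=u^*$ and $u(x,s)\le u^*$ for $s\in(\ell(x),\ell(x^+))$, so $u(x,\cdot)$ would touch the threshold at the two distinct times $\ell(x)$ and $\ell(x^+)$ with a dip in between; combining the strict monotonicity of $\ell$ with the maximum-principle estimate of Lemma~\ref{max-principle-sim} should rule out such non-transversal double contact and thereby upgrade the one-sided continuity of $\ell$ to genuine two-sided continuity. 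Establishing this continuity is the main obstacle; once $\ell$ is continuous the case analysis above closes without gaps.
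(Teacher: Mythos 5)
Your reduction and case analysis are sound, and they reproduce, in expanded form, precisely the two observations that make up the paper's own (two-line) proof: that along each vertical line $p(y,\cdot)$ can change value only at the graph point $(y,\ell(y))\in\Lambda_\normal$, and that $\RD\setminus\Lambda_\normal$ is open, so that a rectangle avoiding the graph exists and carries a time-independent $p$. Your reading of $p^*$ as the local time-independent profile (rather than literally the terminal profile of property (P)) is also the right one; it is what is actually used later, in Step~1 of the proof of Theorem~\ref{cont.theo}. The difference is that the paper disposes of the second observation with the single assertion ``$\Lambda_\normal$ is closed,'' whereas you have correctly recognized that this closedness is not a formality: it is exactly equivalent to excluding your degenerate configuration $t=\lim_{y\searrow x}\ell(y)>\ell(x)$, i.e.\ to right-continuity of $\ell$ at points of $I$ other than right endpoints of rings (Lemma~\ref{I.l.prop} gives only left-continuity there, plus right-continuity at the $X_{2j}$), and at the top point $(x,\ell(x^+))$ of such a jump segment the conclusion of the corollary genuinely fails, since every rectangle around that point meets the graph. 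So you have isolated the one claim on which everything rests.

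The genuine gap is that your proposal does not prove this claim, and the repair you sketch would not work. Lemma~\ref{max-principle-sim} compares the history of $u$ at two \emph{distinct} spatial locations; it says nothing about a repeated contact $u(x,\ell(x))=u^*=u(x,\ell(x^+))$ at the \emph{same} location $x$, which is what a jump produces. Indeed, the jump scenario is fully consistent with that lemma (to the right of $x$ one has $u<u^*$ up to time $\ell(x^+)$, at $x$ one has $u\le u^*$, to the left the threshold was reached earlier), and it is also locally consistent with the parabolic maximum principle: on the rectangle $(x,x+\delta)\times(0,\ell(x^+))$ the solution lies strictly below $u^*$, solves the heat equation there, and attains the value $u^*$ only on the lateral boundary $\{x\}\times[0,\ell(x^+)]$, which the maximum principle permits (the boundary point lemma yields only a sign condition on $u_{x+}$, not a contradiction). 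Ruling out this ``double contact'' is a transversality-type statement of exactly the kind the paper elsewhere treats as a hypothesis (cf.\ condition \eqref{e.transversality2}) rather than as a consequence of the weak formulation; note also that the paper explicitly declines to rule out $\Lambda_\jump\neq\emptyset$ in general. So your case analysis is correct and your identification of the obstruction is accurate, but the obstruction itself is left open, and that is the decisive step: a complete proof must either establish closedness of $\Lambda_\normal$ within the ring domain (equivalently, that jump segments of the precipitation boundary can sit only over right endpoints of rings) or take it as given, which is what the paper's proof does.
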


\begin{proof}
$\Lambda_\normal$ is closed, so $\RD \setminus \Lambda_\normal$ is
open.  Since, by \eqref{p.redefined} and Lemma~\ref{I.l.prop}, for
fixed $y$, $p(y,s)$ changes value only if $(y,s) \in \Lambda_\normal$,
the claim is obvious.
\end{proof}

\section{On the differentiability of $u$ and the continuity of $u_t$}
\label{sec:u.dif.cont}

In this section, we provide conditions on the existence of a classical
time derivative for the solution to the HHMO-model.  It turns out that
$u-\psi$ is always time-differentiable away from the location of onset
of precipitation.  However, time-differentiability may fail on the
precipitation boundary $\Lambda_\normal$.  In Theorem~\ref{cont.theo},
we show that time-differentiability is equivalent to continuity of the
formal time derivative.  Afterwards, in
Lemma~\ref{Lambda.normal.cond}, we present a sufficient condition:
essentially, time-differentiability holds at points where the
precipitation front is transversal to time levels $t=\text{const}$.

\begin{theorem}
\label{cont.theo}
Let $(u,p)$ be a weak solution to \eqref{e.original} with ring domain
$\RD$.  Set 
\begin{subequations}
\begin{gather}
  \Fc_1 (x,t)
  = \int_0^t\,\int_\R\Phi(x-y,t-s) \, 
        p(y,s) \, u_t(y,s) \, \d y \, \d s \,, \\
  \Fc_2(x,t) = \int_{I(u)} \Phi(x-y,t-\ell(y)) \, \d y \,.
\end{gather}
\end{subequations}
Then $u-\psi$ is differentiable in time near $(x,t) \in \RD$ and
$(u-\psi)_t$ is continuous at $(x,t)$ if and only if $\Fc_2$ is
continuous at $(x,t)$.  At a point of continuity,
\begin{equation}
  (u-\psi)_t = - \Fc_1 - u^* \, \Fc_2 \,.
  \label{u-psi.der.form}
\end{equation}
The set of points of continuity includes
$\RD \setminus \Lambda_\normal$.
\end{theorem}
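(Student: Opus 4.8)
The plan is to start from the Duhamel representation and differentiate it in time, the only genuine subtlety being that the density $p\,u$ jumps across the precipitation front. Since $\psi$ already accounts for the singular source, while $u-\psi$ carries zero initial data and solves $(u-\psi)_t=(u-\psi)_{xx}-p\,u$ in the weak sense of \eqref{weak.sol.def.eq}, the Duhamel principle on $\R$ (using even reflection) gives
\begin{equation}
  u(x,t)-\psi(x,t)
  = -\int_0^t\int_\R\Phi(x-y,t-s)\,p(y,s)\,u(y,s)\,\d y\,\d s
  =: -F(x,t) \,.
  \label{prop.duhamel}
\end{equation}
By the canonical form of $p$ in Lemma~\ref{i.prop.7}, the density is supported, up to a null set, on $\{(y,s)\colon y\in I,\ s>\ell(y)\}$, so that $F(x,t)=\int_I\int_{\ell(y)}^t\Phi(x-y,t-s)\,u(y,s)\,\d s\,\d y$. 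I would first record that $p\,u_t$ is well defined a.e.\ and locally bounded, so that $\Fc_1$ makes sense: on the open set $S$ one has $p=0$, while on $\operatorname{int} P$ one has $p=1$ and, since $P\subset D_u$ is free of the singular source, $u$ solves the smooth linear parabolic equation $u_t=u_{xx}-u$ and is classically differentiable with $u_t\le\psi_t\le C_\psi/t$ by Corollary~\ref{c.ut-upper-bound}; the exceptional set $C\cup\Pc$ is null by Lemma~\ref{p.lemma}.

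The heart of the argument is to differentiate \eqref{prop.duhamel} in $t$. After the substitution $\sigma=t-s$ the upper boundary term vanishes because $u(\cdot,0)=0$, and the $t$-derivative is transferred onto the density, whose distributional time derivative is $\partial_s(p\,u)=u^*\,\delta(s-\ell(y))+p\,u_s$ on $I$; the jump of $p$ across the front produces a single layer on $\Lambda_\normal$ carrying the value $u(y,\ell(y))=u^*$ supplied by Lemma~\ref{I.l.prop}\ref{i.prop.1}. This yields the key identity
\begin{equation}
  F_t=u^*\,\Fc_2+\Fc_1 \,,
  \label{prop.key}
\end{equation}
which is exactly \eqref{u-psi.der.form}. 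I would make the passage to \eqref{prop.key} rigorous by mollifying $p$ in time, $p_\eps(y,s)=\chi_\eps(s-\ell(y))$, differentiating legitimately, and letting $\eps\to0$ using the continuity of $u$, the upper bound $u_t\le C_\psi/t$, and dominated convergence. \textbf{This is the step I expect to be the main obstacle:} one must simultaneously control the kernel singularity at $s=t$ and the jump at $s=\ell(y)$, and verify that the mollified potentials and their derivatives converge to the stated limits near the diagonal $y=x$, $t=\ell(y)$.

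It then remains to locate where \eqref{prop.key} is a genuine, continuous time derivative. I would show that $\Fc_1$ is continuous on all of $\RD$: it is a volume heat potential whose density $p\,u_t$ is locally bounded on $\RD$ (by interior parabolic estimates on $\operatorname{int} P$ together with Corollary~\ref{c.ut-upper-bound}), and such potentials are continuous by standard estimates; obtaining the local bound on $u_t$ up to $\Lambda_\normal$, needed for continuity of $\Fc_1$ on the boundary, is the secondary technical ingredient supplied by the bootstrap. Continuity of $(u-\psi)_t$ then reduces entirely to continuity of $\Fc_2$: by \eqref{prop.key} the distributional time derivative of $u-\psi$ equals $-\Fc_1-u^*\Fc_2$, and a continuous function whose distributional time derivative agrees with a continuous function on an open set is classically $C^1$ in $t$ there. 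Since $\Fc_1$ is continuous, this gives both the equivalence and the formula \eqref{u-psi.der.form} at points of continuity.

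Finally, for the inclusion $\RD\setminus\Lambda_\normal\subset\{\text{points of continuity}\}$, observe that the integrand $\Phi(x-y,t-\ell(y))$ defining $\Fc_2$ is singular only when simultaneously $y=x$ and $t=\ell(y)$, that is, only on $\Lambda_\normal$. For $(x,t)\in\RD\setminus\Lambda_\normal$ we have $t\neq\ell(x)$ (or $x\notin I$), so the strict monotonicity and left-continuity of $\ell$ from Lemma~\ref{I.l.prop} keep $t-\ell(y)$ either bounded away from $0$ or negative for $y$ near $x$; in the latter case $\Phi(x-y,t-\ell(y))$ vanishes there. In both cases dominated convergence yields continuity of $\Fc_2$ at $(x,t)$, and hence of $-\Fc_1-u^*\Fc_2$ on a neighborhood, completing the proof.
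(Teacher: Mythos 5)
Your formal computation is the right one, and your treatment of points off the precipitation boundary (mollification of $p$ in time plus dominated convergence, exploiting that $p$ is locally constant in time there) is a workable substitute for the paper's finite-difference argument (its Steps 1--3), leading to the same formula \eqref{u-psi.der.form} on $\RD\setminus\Lambda_\normal$. The genuine gap is your claim that $\Fc_1$ is continuous on \emph{all} of $\RD$ because $p\,u_t$ is ``locally bounded on $\RD$''. Interior parabolic estimates on $\operatorname{int}P$ degenerate as you approach $\Lambda_\normal$, and Corollary~\ref{c.ut-upper-bound} gives only an \emph{upper} bound on $u_t$; nothing in your argument prevents $u_t\to-\infty$ at the front. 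Indeed, the paper explicitly remarks that one cannot exclude points of $\Lambda_\normal$ where $\Fc_1=-\infty$ or $\Fc_2=+\infty$, so unconditional continuity of $\Fc_1$ is not merely unproven --- it is most likely false. The only available route to a lower bound on $u_t$ near a boundary point is the identity $u_t=\psi_t-\Fc_1-u^*\,\Fc_2$ itself combined with local boundedness of $\Fc_2$ there, which is exactly the hypothesis (continuity of $\Fc_2$ at the point) of the equivalence you are trying to prove. Deferring this to ``the bootstrap'' is therefore circular: the bound on $u_t$ up to $\Lambda_\normal$ is the crux of the theorem, not a secondary ingredient, and it is intrinsically conditional.

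This gap takes down both directions of the equivalence as you have arranged them. The ``if'' direction must be run as in the paper's Step~4: continuity of $\Fc_2$ at $(x_0,t_0)$ first yields essential boundedness of $u_t$ on a neighborhood $V$ (via the identity together with Corollaries~\ref{c.ut-upper-bound} and~\ref{c.ut-int-upper-bound}), then integrability of $p\,\lvert u_t\rvert$, and only then continuity of $\Fc_1$ at that point. The ``only if'' direction cannot read ``since $\Fc_1$ is continuous, continuity of $(u-\psi)_t$ forces continuity of $\Fc_2$''; it needs its own argument, which in the paper (Step~5) exploits that $\ell$ is strictly increasing, so that for $y<x$ the kernel $\Phi(x+\eps-y,t-\ell(y))$ is decreasing in $\eps$ and monotone convergence gives $\Fc_2(x+\eps,t)\to\Fc_2(x,t)$; finiteness and continuity of $\Fc_2$ at the boundary point are then extracted from the assumed continuity of $(u-\psi)_t$, with continuity of $\Fc_1$ again established only conditionally along the way.
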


\begin{remark}
In the definition of $\Fc_2$, we use the convention that
$\Phi(x-y,t-\ell(y))=0$ for $t<\ell(y)$.  In the proof, we show that $\Fc_1$
and $\Fc_2$ are well-defined even though we cannot exclude that there
are points $(x,t) \in \Lambda_\normal$ where $\Fc_1(x,t) = - \infty$
or $\Fc_2(x,t) = \infty$.
\end{remark}

\begin{remark}
The difficulty with showing that $\Fc_2$ is continuous is seen as
follows.  Suppose $\ell(y) = t - (x-y)^2$ near $y=x$.  Then
$\Phi(x-y,t-\ell(y)) = \text{const} \cdot \lvert x-y \rvert^{-1}$,
which is not integrable.  Thus, continuity of $\Fc_2$ at a boundary
point necessarily depends on the geometry of the precipitation
front.  For example, if the front advances at a non-vanishing rate,
$\Phi(x-y,t-\ell(y))$ remains integrable and continuity of $\Fc_2$
follows, e.g., by approximating the integrand by a sequence of
continuous compactly supported functions.  We discuss sufficient
conditions for continuity in Lemma~\ref{Lambda.normal.cond} and
Lemma~\ref{l.transversality2} further below.
\end{remark}

\begin{figure}
\centering
\includegraphics{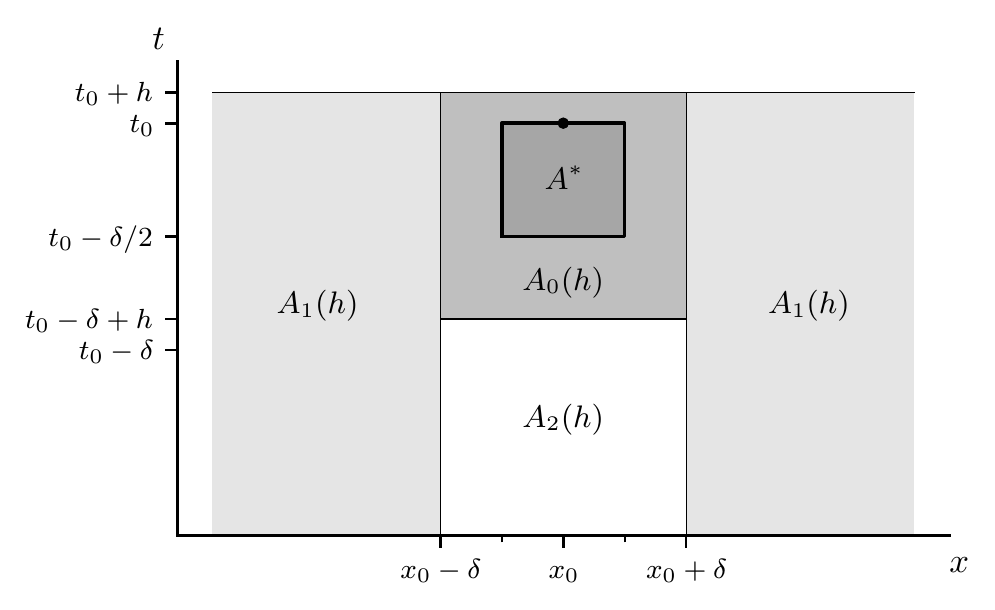}
\caption{Sketch of splitting of the domain of integration in the proof
of Theorem~\ref{cont.theo}.}
\label{f.splitting-sektch}
\end{figure}

\begin{proof}
We begin by introducing useful notation.  For any function of two
variables, $f(x,t)$, and any $h \neq 0$, we write
\begin{equation}
  \label{fin.diff.form}
  \Delta_h f(x,t) = \frac{f(x,t+h)-f(x,t)}{h} \,.
\end{equation}
For any fixed $(x_0,t_0) \in \RD \setminus \Lambda_\normal$ and
$\delta>0$, we introduce the subdomains
\begin{subequations}
  \label{e.partition}
\begin{gather}
  A_0(h) = (x_0-\delta, x_0+\delta) \times (t_0-\delta+h, t_0+h) \,,
  \\
  A_1(h) = \R \setminus (x_0-\delta, x_0+\delta) \times (0,t_0+h) \,,
  \\
  A_2(h) = (x_0-\delta, x_0+\delta) \times (0,t_0-\delta+h] \,,
\intertext{and}
  A^* = (x_0-\delta/2, x_0+\delta/2) \times (t_0-\delta/2,t_0) \,;
\end{gather}
\end{subequations}
see Figure~\ref{f.splitting-sektch}.  Due to
Corollary~\ref{c.point-off-lambda-normal}, we can choose $\delta$
sufficiently small such that for a.e.\
$y \in (x_0-\delta, x_0+\delta)$,
\begin{equation}
  p(y,s) = p^*(y)
  \label{e.pstar}
\end{equation}
for all $s \in (t_0-\tfrac54 \delta,t_0+\frac14\delta)$.  We also
choose $\delta$ sufficiently small that this time interval lies within
the temporal extent of the ring domain $\RD$.  Then for all
$\lvert h \rvert < \delta/4$, which we assume henceforth, we can use
\eqref{e.pstar} in any integral over the subregion $A_0(h)$.  The
proof now proceeds in five distinct steps.  \noqed
\end{proof}

\begin{step}
There exists a finite constant $C>0$ which may depend on the choice of
$(x_0,t_0) \in \RD \setminus \Lambda_\normal$ and $\delta$ such that
\begin{equation}
  \sup_{\substack{(x,t) \in A^* \\ \lvert h \rvert < \delta/4}} \,
  \lvert \Delta_h u (x,t) \rvert < C \,.
  \label{e.step1}
\end{equation}
\end{step}

\begin{proof}[Proof of Step 1]
First, we note that $\psi(x,t)$ is absolutely continuous in $t$ with a
uniform bound $C^*$ on $\psi_t$ where it exists and for $t$ bounded
away from zero.  Therefore, Lemma~\ref{u-psi.non-incr} implies that
for every $(x,t) \in \RD$ and $h$ small enough,
\begin{equation}
  \Delta_h u(x,t) \leq \Delta_h \psi(x,t) \leq C^* \,.
  \label{e.delta-u-upper}
\end{equation}
Thus, the main task is to find a lower bound for $\Delta_h u$.

A weak solution to the HHMO-model satisfies the Duhamel formula
\begin{equation}
  u(x,t) = \psi(x,t) - \int_0^t \int_\R \Phi(x-y, t-s) \,
    p(y,s) \, u(y,s) \, \d y \, \d s \,,
  \label{e.duhamel}
\end{equation}
see, e.g., \cite{Darbenas:2018:PhDThesis} for a detailed discussion of
the functional setting.  Fix $(x,t) \in A^*$.  Using \eqref{e.duhamel}
for each of the two terms in the finite difference $\Delta_h u(x,t)$,
breaking up the domain of integration into $A_0(h)$, $A_1(h)$, and
$A_2(h)$ for the first term and $A_0(0)$, $A_1(0)$, and $A_2(0)$ for
the second, separating out the difference between these sets of
integration, performing a ``summation by parts'' by change of
variables on the subdomain $A_0(0)$, and using \eqref{e.pstar} on the
part of the domain where it is applicable, we find that
\begin{align}
  \Delta_h u(x,t)
  & = \Delta_h \psi(x,t)
      - \iint_{A_0(0)} \Phi(x-y,t-s) \, p^*(y) \, \Delta_h u(y,s) \,
          \d y \, \d s
      \notag \\
  & \quad - \iint_{A_1(\max(0,h))} \Delta_h \Phi(x-y,t-s) \,
        p(y,s) \, u(y,s) \, \d y \, \d s
      \notag \\
  & \quad - \iint_{A_2(\min(0,h))} \Delta_h \Phi(x-y,t-s) \,
        p(y,s) \, u(y,s) \, \d y \, \d s
      \notag \\
  & \quad - \frac1h \iint_{A_2(0) \bigtriangleup A_2(h)}
        \Phi(x-y,t-s+\max(0,h)) \, 
        p^*(y) \, u(y,s) \, \d y \, \d s
      \notag \\
  & \equiv \Delta_h \psi(x,t) + \Ic_0 + \Ic_1 + \Ic_2 + \Ic_2^* \,,
  \label{e.delta-u-split}
\end{align}
where $A \bigtriangleup B =(A \setminus B) \cup (B \setminus A)$
denotes the symmetric difference of the sets $A$ and $B$.  We remark
that we only need to account for the symmetric difference between the
sets $A_2(0)$ and $A_2(h)$; the other symmetric differences are
implicit via the convention that $\Phi(x-y,t-s)=0$ for $s>t$.

The first term on the right of \eqref{e.delta-u-split} is bounded
below by uniform absolute continuity of $\psi$ as before.  Next, due
to \eqref{e.delta-u-upper},
\begin{equation}
  \Ic_0 \geq - C^* \int_0^{t_0} \int_\R \Phi(x-y,t-s) \, \d y \, \d s
      = - C^* \, t_0 \,.
\end{equation}
To proceed, recall that $u \leq \Psi(\alpha)$ by Lemma~\ref{u.psi} and
note that the effective horizontal domain of integration is bounded.
Moreover,
\begin{equation}
  \sup_{\substack{(y,s) \in A_1(\max(0,h)) \\
    (x,t) \in A^*}} \, \Delta_h \Phi(x-y,t-s)
  \leq \sup_{\substack{\lvert y \rvert \geq \delta/4 \\
    t \geq \delta/4}} \, \Phi_t(y,s)
  \label{e.delta-psi-estimate}
\end{equation}
is bounded.  This provides the lower bound for $\Ic_1$; an analogous
argument is made for $\Ic_2$.  

Finally, we note that $m(A_2(0) \bigtriangleup A_2(h)) = 2 \delta h$.
Moreover, as in \eqref{e.delta-psi-estimate}, the singularity of the
heat kernel is at least a distance $\delta/4$ away from the domain of
integration whenever $(x,t) \in A^*$.  Thus, $\Ic_2^*$ is also bounded
below.  This concludes the proof of Step~1.
\end{proof}

\begin{step}
The function $u-\psi$ is time-differentiable at
$(x_0,t_0) \in \RD \setminus \Lambda_\normal$ with
\begin{align}
  (u-\psi)_t(x_0,t_0)
  & = - \int_{x_0-\delta}^{x_0+\delta}
        \int_{t_0-\delta}^{t_0} \Phi(x_0-y,t_0-s) \,
          u_t(y,s) \, \d s \, p^*(y) \, \d y
      \notag \\
  & \quad - \iint_{A_0(0)^c} \Phi_t(x_0-y,t_0-s) \,
          p(y,s) \, u(y,s) \, \d y \, \d s
      \notag \\
  & \quad - \int_{x_0-\delta}^{x_0+\delta} \Phi(x_0-y,\delta) \,
        p^*(y) \, u(y,t_0-\delta) \, \d y
  \label{u-psi.der.temp}
\end{align}
for some $\delta >0$.
\end{step}

\begin{proof}[Proof of Step 2]
We employ the domain partition \eqref{e.partition} with $\delta$
reduced to half its value from Step~1.  Formula
\eqref{e.delta-u-split} remains valid on this new partition.  We fix
$(x,t) = (x_0,t_0)$ and pass to the limit $h\to 0$ in each of the
terms on its right hand side as follows.

On $A_0$, Step~1 implies that for every fixed
$y \in (x_0-\delta,x_0+\delta)$, $u(y,s)$ is absolutely continuous as
a function of $s$ on the interval $(t_0-\delta,t_0)$ and therefore
differentiable a.e.\ in time with $\lvert u_t \rvert \leq C$.  Hence,
by the dominated convergence theorem,
\begin{equation}
  \lim_{h \to 0} \int_{t_0-\delta}^{t_0} \Phi(x_0-y,t_0-s) \,
    \Delta_h u(y,s) \, \d s
  = \int_{t_0-\delta}^{t_0} \Phi(x_0-y,t_0-s) \,
    u_t(y,s) \, \d s \,.
\end{equation}
A second application of the dominated convergence theorem, using
\begin{equation}
  y \mapsto C \int_{t_0-\delta}^{t_0} \Phi(x_0-y,t_0-s) \, \d s
\end{equation}
as the dominating function, then establishes that $\Ic_0$ converges to
the first term on the right of \eqref{u-psi.der.temp}.

For the remaining terms, due to the boundedness of $u$, $\Phi$, and
$\Phi_t$, we invoke the dominated convergence theorem directly to
establish convergence to the corresponding terms on the right of
\eqref{u-psi.der.temp}.
\end{proof}

\begin{remark}
In the proof of Step~2, Borel-measurability of $u_t$ on $A_0$ is not
easily asserted so that we claim the first term on the right of
\eqref{u-psi.der.temp} only in the sense of iterated partial
integrals.  However, once \eqref{u-psi.der.temp} is established,
measurability in two dimensions is obvious \emph{a posteriori}; see
Step~3 below.
\end{remark}

\begin{step}
$(u-\psi)_t$ satisfies \eqref{u-psi.der.form} on
$\RD \setminus \Lambda_\normal$.
\end{step}

\begin{proof}[Proof of Step 3]
Step~2 shows that $u-\psi$ is time-differentiable on
$\RD \setminus \Lambda_\normal$.  In particular, $u_t$ exists a.e.\ on
$\RD$ and is measurable as the pointwise limit of the measurable
function $\Delta_hu$.  We can thus revisit the limit of $\Ic_0$,
applying the dominated convergence theorem directly on the subdomain
$A_0(0)$.  This proves that
\begin{equation}
  \lim_{h\to 0} \Ic_0
  = \iint_{A_0(0)}
    \Phi(x_0-y,t_0-s) \, p^*(y) \, u_t(y,s) \, \d s \, \d y \,.
  \label{fub.th.A0}
\end{equation}

To rewrite the remaining terms in \eqref{u-psi.der.temp}, we note once
again that $u_t$ is measurable and consider the integral
\begin{equation}
  \Ic_0^* = \iint_{A_0(0)^c} \Phi(x_0-y,t_0-s) \,
    p(y,s) \, u_t(y,s) \, \d y \, \d s \,.
  \label{e.Ic0star}
\end{equation}
By Corollary~\ref{c.ut-upper-bound} and~\ref{c.ut-int-upper-bound},
the integrand in this expression has an integrable upper bound.  Thus,
we can apply the Fubini theorem to the positive part of the integrand
and the Tonelli theorem to the negative part, to write
\begin{equation}
  \Ic_0^*
  = \int_{I^*} \int_{\ell(y)}^{c(y)} \Phi(x_0-y,t_0-s) \, u_t(y,s) \,
    \d s \, \d y \,,
  \label{fub.th.A123}
\end{equation}
where $c(y) = t_0 - \delta$ for $y \in (x_0-\delta,x_0+\delta)$ and
$c(y)=t_0$ otherwise, and
\begin{equation}
  I^* = \{ x \in I \colon \ell(x) < c(x) \} \,.
\end{equation}
Then, for $y\in I^*$, we have
\begin{align}
  \int_{\ell(y)}^{c(y)}
  & \bigl(
      \Phi(x_0-y,t_0-s) \, u_t(y,s) - \Phi_t(x_0-y,t_0-s) \, u(y,s)
    \bigr) \, \d s
    \notag \\
  & = \int_{\ell(y)}^{c(y)} \frac\partial{\partial s}
        \bigl( \Phi(x_0-y,t_0-s) \, u(y,s) \bigr) \, \d s
    \notag \\
  & = \Phi(x_0-y,t_0-c(y)) \, u(y,c(y))
      - \Phi(x_0-y,t_0-\ell(y)) \, u(y,\ell(y)) \,.
  \label{fund.th.cal}
\end{align}
Noting that $u(y,\ell(y)) = u^*$ and $\Phi(x_0-y,t_0-c(y))=0$ outside of
$y \in (x_0-\delta,x_0+\delta)$, then combining
\eqref{u-psi.der.temp}, \eqref{fub.th.A123}, and \eqref{fund.th.cal},
we find that the expression from Step~2 implies
\eqref{u-psi.der.form}.
\end{proof}

\begin{step}
Suppose that $\Fc_2$ is continuous at $(x_0,t_0) \in \RD$.  Then there
exists a neighborhood $V$ of $(x_0,t_0)$ such that $u-\psi$ is
differentiable in time on $V$, $(u-\psi)_t$ is continuous at
$(x_0,t_0)$, and \eqref{u-psi.der.form} holds at this point.
\end{step}

\begin{proof}[Proof of Step 4]
By continuity of $\Fc_2$, there exists an open neighborhood
$V \subset \RD$, of $(x_0,t_0)$, bounded away from $t=0$, such
that $\Fc_2$ is uniformly bounded on $V$.  First, we show that $u_t$
is essentially bounded on $V$.  Indeed, an upper bound is already
given by Corollary~\ref{c.ut-upper-bound}.

To obtain a lower bound, notice that, by Step~3 and
Lemma~\ref{u-psi.non-incr},
\begin{align}
  u_t
  & = \psi_t - \Fc_1 - u^* \, \Fc_2
      \notag \\
  & \geq \psi_t - \int_0^t \int_\R \Phi(x-y,t-s) \, p(y,s) \,
         \psi_t(y,s) \, \d y \, \d s
         - u^* \, \sup_{(y,s) \in V} \Fc_2(y,s)
\end{align}
a.e.\ on $V$.  The first term on the right is clearly finite on $V$,
the second by Corollary~\ref{c.ut-int-upper-bound}, and the last term
is finite by construction.

Second, we show that there exists $\delta >0$ such that
$p\, \lvert u_t \rvert$ is integrable on
$A = I \times (0,t_0+\delta)$.  To see this, fix $\delta>0$ such that
there exists $(x,t) \in V \setminus \Lambda_\normal$ with
$t>t_0+\delta$ such that $(u-\psi)_t \leq 0$ exists at this point.
Let $\sigma_- = - \min\{p u_t,0\}$ and $\sigma_+ = \max\{p u_t,0\}$
denote the negative and positive parts of $pu_t$, respectively.  By
\eqref{e.ut-upper-bound}, $\sigma_+$ is essentially bounded.  For
$\sigma_-$, we estimate
\begin{align}
  \inf_{(y,s) \in A} \,
  & \Phi(x-y,t-s) \iint_A \sigma_-(y,s) \, \d y \, \d s
    \notag \\
  & \leq \int_0^{t} \int_\R \Phi(x-y,t-s) \,
      \sigma_-(y,s) \, \d y \, \d s
    \notag \\
  & = (u-\psi)_t + \Fc_1^+ + u^* \, \Fc_2 \,, 
  \label{e.bound-neg-part}
\end{align}
where
\begin{equation}
  \Fc_1^+ = \int_0^{t} \int_\R \Phi(x-y,t-s) \,
      \sigma_+(y,s) \, \d y \, \d s 
\end{equation}
is bounded due to Corollary~\ref{c.ut-upper-bound}
and~\ref{c.ut-int-upper-bound}.  Since $\Phi(x-y,t-s)$ has a positive
lower bound on $A$ and all terms on the right hand side of
\eqref{e.bound-neg-part} are bounded, $\sigma_-$ is integrable on $A$,
and so is $p \, \lvert u_t \rvert$.

Now, for every $(x,t) \in A$,
\begin{align}
  \Fc_1(x,t)
  & = \iint_{A \setminus V}
         \Phi(x-y,t-s) \, p(y,s) \, u_t(y,s) \, \d y \, \d s
         \notag \\
  & \quad + \iint_{A \cap V}
         \Phi(x-y,t-s) \, p(y,s) \, u_t(y,s) \, \d y \, \d s \,.
  \label{e.continuity-estimate}
\end{align}
The first term is continuous by the dominated convergence theorem as
$p \, \lvert u_t \rvert$ is integrable and the kernel is bounded on
$A\setminus V$ uniformly for $(x,t)$ near $(x_0,t_0)$.  The second
term is bounded as a convolution of an $L^1$ with an $L^\infty$
function as $u_t$ is bounded on $V$.

When $(x_0,t_0) \in \RD \setminus \Lambda_\normal$, the claim follows
directly from formula \eqref{u-psi.der.form} proved in Step~3.  When
$(x_0,t_0) \in \RD \cap \Lambda_\normal$, we note that
\eqref{u-psi.der.form} holds for $x=x_0$ fixed and a.e.\ $t$ near
$t_0$ and the right hand side of \eqref{u-psi.der.form} is continuous
at $(x_0,t_0)$.  Hence, we can use \eqref{u-psi.der.form} to
continuously extend $(u-\psi)_t$ to the point $(x_0,t_0)$.
\end{proof}

\begin{step}
Suppose that there exists an open neighborhood $V$ of
$(x_0,t_0)\in \RD$ such that $u-\psi$ is differentiable in time on $V$
and $(u-\psi)_t$ is continuous at $(x_0,t_0)$.  Then $\Fc_2$ is
continuous at $(x_0,t_0)$ and \eqref{u-psi.der.form} holds at this
point.
\end{step}

\begin{proof}[Proof of Step 5]
Since $(u-\psi)_t$ is continuous at $(x_0,t_0)$, $u_t$ exists a.e.\
and is essentially bounded on a possibly smaller neighborhood, again
denoted $V$.  Following the proof of Step~4 starting from the second
claim we find, as before, that $\Fc_1$ is continuous at $(x_0,t_0)$.
Turning to $\Fc_2$, we first show that $\Fc_2$ is well-defined on $V$.
Indeed, on $V\setminus \Lambda_\normal$, the integrand is bounded, so
$\Fc_2$ is finite.  Now take $(x,t) \in V \cap \Lambda_\normal$.
Since $\ell$ is strictly increasing,
$(x+\eps,t) \notin \Lambda_\normal$ for every $\eps>0$ and
\eqref{u-psi.der.form} holds true at every such point.  Moreover,
$\ell(y)>t$ for $y>x$, so that $\Phi(x-y,t-\ell(y))$ can only be
nonzero if $y<x$ so that, for fixed $y$, $\Phi(x+\eps-y,t-\ell(y))$ is
a decreasing function of $\eps$.  Consequently, by the monotone
convergence theorem,
\begin{equation}
  \lim_{\eps \searrow 0} \Fc_2(x+\eps,t) = \Fc_2(x,t)
\end{equation}
either as a finite limit or diverging to $+\infty$.  Further, taking
$\limsup_{\eps \searrow 0}$ of \eqref{u-psi.der.form},
\begin{equation}
  \limsup_{\eps \searrow 0} (u-\Psi)_t(x+\eps,t)
  = - \Fc_1(x,t) - u_* \, \Fc_2(x,t) \,.
\end{equation}
Since the first two terms are finite, so is $\Fc_2(x,t)$.  Finally, at
the point $(x_0,t_0)$,
\begin{align}
  \limsup_{x \to x_0} \limsup_{\eps \searrow 0} \, (u-\psi)_t(x+\eps,t)
  & = \liminf_{x \to x_0} \, \limsup_{\eps \searrow 0}
      (u-\psi)_t(x+\eps,t)
      \notag \\ 
  & = (u-\psi)_t(x_0,t_0) \,.
\end{align}
Using continuity of $\Fc_1$ once again, we conclude that $\Fc_2$ is
continuous at $(x_0,t_0)$.
\end{proof}

Since $\Fc_2$ is clearly continuous for every
$(x,t) \notin \Lambda_\normal$ by dominated convergence, these five
steps conclude the proof of Theorem~\ref{cont.theo}.  On
$\Lambda_\normal$, the following lemma provides a sufficient condition
for continuity.

\begin{lemma}
\label{Lambda.normal.cond}
Let $(u,p)$ be a weak solution to \eqref{e.original} with ring domain
$\RD$.  Suppose that $(x_0,t_0) \in \Lambda_{\normal} \cap \RD$. Then
$\Fc_2$, defined in Theorem~\ref{cont.theo}, is continuous near
$(x_0,t_0)$ provided
\begin{equation}
  \label{F.2.cont.cond}
  u_{x+}(x_0,t_0)
  = \lim_{h\searrow0}\frac{u(x_0+h,t_0)-u(x_0,t_0)}h<0 \,.
\end{equation}
\end{lemma}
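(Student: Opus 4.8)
The plan is to reduce continuity of $\Fc_2$ to a single geometric property of the precipitation front $\ell$ near $x_0$---that it advances at a non-vanishing rate---and then to translate the transversality hypothesis $u_{x+}(x_0,t_0)<0$ into precisely this property. As the remark following Theorem~\ref{cont.theo} makes clear, the only obstruction to continuity is a front that is tangent to a time level $t=\mathrm{const}$; away from such degeneracies the kernel $\Phi(x-y,t-\ell(y))$ stays integrable. Concretely, I would first split $I$ into a near piece $I\cap(x_0-\delta,x_0+\delta)$ and a far piece. The integral of $\Phi(x-y,t-\ell(y))$ over the far piece is continuous in $(x,t)$ near $(x_0,t_0)$ by dominated convergence, since there $t-\ell(y)$ is bounded away from zero whenever $x-y$ is small. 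Everything therefore reduces to the near piece.

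The core step is to show that there exist constants $c,\delta>0$ such that
\begin{equation}
  \ell(y')-\ell(y)\ge c\,(y'-y)
  \qquad\text{for all } y,y'\in I\cap(x_0-\delta,x_0+\delta),\ y<y' \,.
  \label{sketch.star}
\end{equation}
To obtain \eqref{sketch.star}, I would first upgrade the pointwise hypothesis to a neighborhood bound on $u_x$. By Definition~\ref{weak.sol.def}\ref{weak.ii}, $(u-\psi)_x$ is continuous, and $\psi_x$ is continuous up to $\Pc$ from the side of $D_u$; since moving in the $+x$ direction from any point of $\Lambda_\normal$ enters $D_u$, the one-sided derivative $u_{x+}(x_0,t_0)$ equals the $D_u$-continuous value of $u_x$ at $(x_0,t_0)$. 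Hence $u_{x+}(x_0,t_0)<0$ extends, by continuity, to $u_x\le -a<0$ on a space-time neighborhood of $(x_0,t_0)$ intersected with $\overline{D_u}$, for some $a>0$. Now fix $y<y'$ in $I$ near $x_0$. Since $\ell$ is strictly increasing, $u(y,\ell(y))=u(y',\ell(y'))=u^*$ by Lemma~\ref{I.l.prop}\ref{i.prop.1}, while integrating $u_x$ along the horizontal segment $\{(z,\ell(y))\colon z\in[y,y']\}\subset D_u$ gives $u(y',\ell(y))\le u^*-a\,(y'-y)$. Consequently $u(y',\ell(y'))-u(y',\ell(y))\ge a\,(y'-y)$. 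On the other hand, writing $u=(u-\psi)+\psi$, monotonicity of $u-\psi$ in time (Lemma~\ref{u-psi.non-incr}) together with $\psi_t\le C_\psi/t\le M$ near $t_0$ (Corollary~\ref{c.ut-upper-bound}) yields $u(y',\ell(y'))-u(y',\ell(y))\le M\,(\ell(y')-\ell(y))$. Combining the two inequalities gives \eqref{sketch.star} with $c=a/M$.

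With \eqref{sketch.star} in hand, continuity of the near piece follows by a uniform domination argument. For $(x,t)$ near $(x_0,t_0)$, only $y\in I$ with $\ell(y)<t$ contribute; letting $\hat y(t)=\sup\{y\in I\cap(x_0-\delta,x_0+\delta)\colon \ell(y)\le t\}$ denote the point where the front crosses the level $t$, \eqref{sketch.star} gives $t-\ell(y)\ge \ell(\hat y(t))-\ell(y)\ge c\,(\hat y(t)-y)$ for $y<\hat y(t)$. Therefore
\begin{equation}
  \Phi(x-y,t-\ell(y))
  \le \frac{1}{\sqrt{4\pi\,(t-\ell(y))}}
  \le \frac{1}{\sqrt{4\pi c\,(\hat y(t)-y)}} \,,
  \label{sketch.dom}
\end{equation}
which has a uniform $L^1$ bound in $y$ over the near interval, independent of the slowly varying crossing point $\hat y(t)\in(x_0-\delta,x_0+\delta)$. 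Since the integrand is continuous in $(x,t)$ for a.e.\ fixed $y$ (the exceptional set $\{y\colon \ell(y)=t\}$ being a single point), the equi-integrable bound \eqref{sketch.dom} lets me pass to the limit, e.g.\ by approximating the integrand by continuous, compactly supported functions as suggested in the remark after Theorem~\ref{cont.theo}, to conclude that the near piece---and hence $\Fc_2$---is continuous throughout a neighborhood of $(x_0,t_0)$.

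The main obstacle I anticipate is the second paragraph: rigorously turning the single one-sided spatial derivative at $(x_0,t_0)$ into the uniform linear lower bound \eqref{sketch.star} on the front. This requires care about the regularity of $u_x$ up to $\Pc$ in the degenerate case $(x_0,t_0)\in\Lambda_\deg$, about keeping all auxiliary horizontal and vertical segments inside $D_u$ where the estimates are valid, and about making the neighborhood bound on $u_x$ genuinely uniform, so that \eqref{sketch.star} holds for all nearby pairs $y<y'$ on both sides of $x_0$ rather than merely at $x_0$ itself. Once \eqref{sketch.star} is secured, the passage to continuity of $\Fc_2$ is essentially the routine integrability argument already indicated in the paper's remark.
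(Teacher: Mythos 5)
Your proposal is correct and follows essentially the same route as the paper's proof: the same near/far splitting, the same derivation of the linear lower bound on the front's growth (combining $u(y,\ell(y))=u^*$, the neighborhood bound on $u_x$ in $\overline{D_u}$, and monotonicity of $u-\psi$ in time with the bound on $\psi_t$), and the same domination of the kernel by a translate of the fixed profile $(4\pi c\,(\hat y(t)-y))^{-1/2}$. The only cosmetic difference is that you conclude via a.e.\ pointwise convergence plus equi-integrability, while the paper invokes the Vitali convergence theorem with convergence in measure; these are interchangeable here.
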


\begin{figure}
\centering
\includegraphics{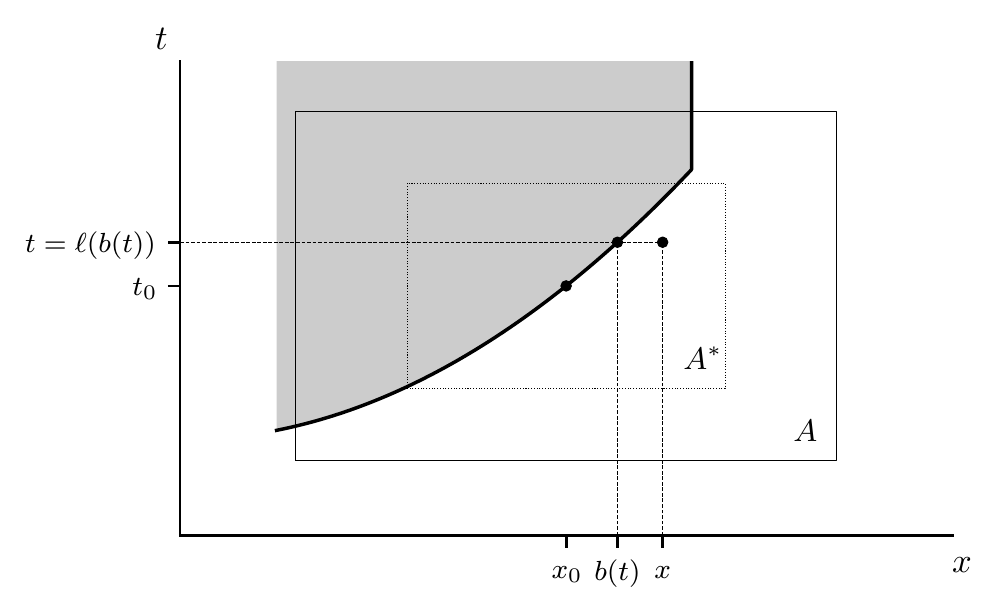}
\caption{Sketch of the geometry of the construction used in the proof
of Lemma~\ref{Lambda.normal.cond}.}
\label{f.splitting-sektch2}
\end{figure}

\begin{proof}
Since $u - \psi \in C^{1,0}(\RD)$, there exists a rectangular
neighborhood $A$ of $(x_0,t_0)$ such that
$u_{x+} < \frac12 \, u_{x+}(x_0,t_0)<0$ on $A\setminus D_o$.  We
choose $A$ small enough so that it is contained in the first quadrant,
is bounded away from the $x$-axis, and intersects only one ring.  Let
$A^*$ denote a smaller neighborhood of $(x_0,t_0)$, strictly nested
inside of $A$ (see Figure~\ref{f.splitting-sektch2}).

Writing
\begin{equation}
  I_A = \{ x \in I \colon (x,t) \in A \text{ for some } t \} \,,
\end{equation}
we split the domain of integration in the definition of $\Fc_2$ into
\begin{equation}
  \Fc_2(x,t)
  = \int_{I\setminus I_A} \Phi(x-y,t-\ell(y)) \, \d y
  + \int_{I_A} \Phi(x-y,t-\ell(y)) \, \d y \,.
  \label{e.fc2-splitting}
\end{equation}
The first term is continuous on $A^*$ by dominated convergence because
the singularity of the kernel is bounded by a uniform distance away
from the domain of integration.  Thus, the main task is to prove that
the second term is continuous on $A^*$ as well.  

We employ the Vitali convergence theorem (e.g.\
\cite{Folland:1999:RealA}).  First, we show that
\begin{equation}
  \Phi(x - \, \cdot \,, t - \ell(\,\cdot\,))
  \to \Phi(x_0 - \, \cdot \,, t_0 - \ell(\,\cdot\,))
\end{equation}
in measure as $(x,t) \to (x_0,t_0)$.  Indeed, let $\eps>0$.   For
every $r>0$, take an arbitrary $y$ with $\lvert x_0 - y \rvert > r$.
Then
\begin{align}
  \lvert \Phi(x_0&-y,t_0-\ell(y)) - \Phi(x-y,t-\ell(y)) \rvert
    \notag \\
  & \leq \lvert \Phi(x_0-y,t_0-\ell(y)) - \Phi(x_0-y,t-\ell(y)) \rvert
    \notag \\
  & \quad 
       + \lvert \Phi(x_0-y,t-\ell(y)) - \Phi(x-y,t-\ell(y)) \rvert
    \notag \\
  & \leq \sup_{y \notin B(x_0,r)}
         \sup_{\tau \in [t_0 - \ell(y), t-\ell(y)]} \,
         \lvert \Phi_t(x_0-y,\tau) \rvert \, \lvert t_0-t \rvert
    \notag \\
  & \quad + \sup_{y \notin B(x_0,r)}
         \sup_{\xi \in [x_0-y,x-y]}
         \sup_{s \in [0, \ell(y)]} \,
         \lvert \Phi_x(\xi,s) \rvert \, \lvert x - x_0 \rvert \,.
  \label{e.conv-measure-estimate}
\end{align}
The suprema on the right hand side are both finite (but may depend on
$r$).  Therefore, it is possible to choose $\delta>0$ small enough so
that the right hand side of \eqref{e.conv-measure-estimate} is less
than $\eps$ whenever $\lvert t_0-t \rvert< \delta$ and
$\lvert x - x_0 \rvert < \delta$.  Thus,
\begin{equation}
  m\{y \in \R \colon \lvert \Phi(x_0-y,t_0-\ell(y)) - \Phi(x-y,t-\ell(y))
  \rvert \geq \eps \} < 2r \,.
\end{equation}
Since $r$ was arbitrary, this proves that
\begin{equation}
  \lim_{(x,t)\to (x_0,t_0)}
  m\{y \in \R \colon \lvert \Phi(x_0-y,t_0-\ell(y)) -
  \Phi(x-y,t-\ell(y)) 
  \rvert \geq \eps \} = 0 \,,
\end{equation}
i.e., convergence in measure.

Second, we show that $\Phi(x - \, \cdot \,, t - \ell(\,\cdot\,))$ is
uniformly integrable for $(x,t) \in A^*$.  Here, it suffices to bound
the integrand by a translate of a fixed integrable profile.  Recalling
that, by Lemma~\ref{I.l.prop}, $u(y,\ell(y))=u^*$ for all $y\in I$ and
$\ell$ is strictly increasing, we find that, for $y_1,y_2\in I_A$ with
$y_1<y_2$,
\begin{align}
\label{l.deriv}
  0 & =u(y_2,\ell(y_2))-u(y_1,\ell(y_1))
      \notag \\
    & =u(y_2,\ell(y_2))-u(y_2,\ell(y_1))+u(y_2,\ell(y_1))-u(y_1,\ell(y_1))
      \notag\\
    & \le \psi(y_2,\ell(y_2))-\psi(y_2,\ell(y_1))
          +u_x(\xi,\ell(y_1)) \, (y_2-y_1)\notag\\
    & = \psi_t(y_2,\tau) \, (\ell(y_2)-\ell(y_1))
        + u_x(\xi,\ell(y_1)) \, (y_2-y_1) \,.
\end{align}
The inequality in the third line is due to Lemma~\ref{u-psi.non-incr}
which states that $u-\psi$ is non-increasing in time.  Further, we
used the mean value theorem twice, for some $\xi\in(y_1,y_2)$ and
$\tau\in(\ell(y_1),\ell(y_2))$.  We conclude that
\begin{equation}
  \frac{\ell(y_2)-\ell(y_1)}{y_2-y_1}
  \ge - \frac{u_x(x_0,t_0)}{2 \, \sup_A\psi_t}
  \equiv C_A > 0 \,.
  \label{e.transversality1}
\end{equation}
In the following, take any $(x_0,t_0) \in A^* \cap \Lambda_\normal$,
fix $(x,t) \in A^*$, and suppose that the ring intersecting $A$
intersects time-level $t$ within the interior of $A$.  (If not,
$\Phi(x - \, \cdot \,, t - \ell(\,\cdot\,))$ is essentially zero on
$I_A$ and there is nothing to do.)  Then for all $y, y_2 \in I_A$ with
$y < y_2$ such that $\ell(y_2) \leq t$ we have, by
\eqref{e.transversality1},
\begin{equation}
  t - \ell(y) \geq \ell(y_2) - \ell(y) \geq C_A \, (y_2-y) 
\end{equation}
so that
\begin{equation}
  t - \ell(y) \geq C_A \, (b(t) - y)
\end{equation}
where $b(t) = \sup \{ y \in I_A \colon \ell(y) \leq t \}$, see
Figure~\ref{f.splitting-sektch2}.  Hence,
\begin{equation}
  \Phi(x - y, t - \ell(y))
  \leq \I_{y \leq b(t)} \, \frac1{\sqrt{4 \pi \, C_A \, (b(t)-y)}} \,,
\end{equation}
which, as a translate of a fixed profile, is uniformly integrable on $I_A$.

Finally, we note that the interval of integration is bounded, so that
that the family $\Phi(x - \, \cdot \,, t - \ell(\,\cdot\,))$
restricted to $I_A$ is trivially tight.  We conclude that the Vitali
convergence theorem applies and proves that the second integral in
\eqref{e.fc2-splitting} is continuous at $(x_0,t_0)$ as well.
\end{proof}

\begin{remark}
If we think of $\Fc_2$ being defined with a general function $\ell(y)$
that does not necessarily come from the HHMO-model, there are two
failure modes for the continuity of $\Fc_2$.  The first is
topological: if the number of intersections of $\ell$ with horizontal
lines in the $x$-$t$ plane changes, the value of $\Fc_2$ can jump as
$t$ is varied.  In our setting, this is prevented by the strict
monotonicity of $\ell$.  The second failure mode is analytical: if
$\ell(y)$ crosses time-level $t$ at the wrong rate, then the integral
may diverge.  This is illustrated by the family of functions
$\ell(x) = t_0 - (x-x_0) \, \lvert x-x_0 \rvert^\gamma$.  When
$\gamma=1$, the integral diverges, whereas for any $\gamma \in (-1,1)$
or $\gamma>1$, the integral is finite.  In our setting, divergence is
prevented by the transversality condition \eqref{F.2.cont.cond} which,
as this discussion shows, is sufficient but clearly not necessary.
\end{remark}

\section{On uniqueness of the solutions}
\label{the.uniqueness}

In the following, we prove two uniqueness theorems.  The first,
Theorem~\ref{u.uniqueness}, asserts unconditional uniqueness of the
solution to the HHMO-model for a short but positive interval of time.
The second result, Theorem~\ref{t.cond-uniqueness} proves uniqueness
within the ring domain of the solution and subject to some regularity
of the precipitation front, which can be expressed as transversality
in time of the increase of concentration at the location of the front.
The proof also shows that any breakdown of uniqueness must be
accompanied by topologically complex behavior of the associated
precipitation fronts.

\begin{theorem}[Short-time uniqueness]
\label{u.uniqueness}
Assume that $u^*$ is a supercritical precipitation threshold.  Then
there exists a time $T_{\unique}>0$ such that any two weak solutions
to \eqref{e.original} are identical on
$D_{\unique}=\R\times[0,T_{\unique}]$.
\end{theorem}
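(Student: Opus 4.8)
The plan is to establish uniqueness by a parabolic energy estimate, reducing the problem to the monotonicity inequality \eqref{e.monotonicity} on a short time slab. Given two weak solutions $(u_1,p_1)$ and $(u_2,p_2)$, write $w=u_1-u_2$. Subtracting the two weak formulations \eqref{weak.sol.def.eq}, the $\psi$-contributions cancel and $w$ solves $w_t=w_{xx}-(p_1u_1-p_2u_2)$ in the weak sense. Testing against $w$—admissible after a standard Steklov averaging in time, using $u_i-\psi\in C^{1,0}$ from Definition~\ref{weak.sol.def}\ref{weak.ii} and $w(\cdot,0)=0$—yields
\begin{equation}
  \tfrac12\int_\R w^2(x,T)\,\d x+\int_0^T\!\!\int_\R w_x^2\,\d x\,\d t
  =-\int_0^T\!\!\int_\R(p_1u_1-p_2u_2)\,w\,\d x\,\d t \,.
\end{equation}
Hence it suffices to choose $T_{\unique}>0$ so that the integrand $(p_1u_1-p_2u_2)\,w\ge0$ a.e.\ on $D_{\unique}$; then $w\equiv0$.

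I would then fix $T_{\unique}<t^*$, where $t^*$ is the time from the proof of Lemma~\ref{one.ring} defined by $u^*=\Psi(\alpha)\,(1-t^*)$, and analyse the integrand slice by slice, splitting each time level into three regions separated by $\Pc$ and by $\{x=\alpha^*\sqrt t\}$ (recall $\alpha^*>\alpha$). On $D^*$, Lemma~\ref{u.psi} gives $p_1=p_2=0$ and $u_i<u^*$, so the integrand vanishes. On $D_o\cap D_{\unique}$ the parabola is crossed at time $x^2/\alpha^2\le T_{\unique}<t^*$, so the lower bound \eqref{e.hhmo-lem3.5} forces $u_i>u^*$ at the crossing; by \eqref{e.hhmo-p-weak-alternative} together with monotonicity of $p$ in time (Definition~\ref{weak.sol.def}\ref{weak.3.5}) we get $p_1=p_2=1$, so the integrand equals $w^2\ge0$.

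The decisive region is the strip $\{\alpha\sqrt t<x<\alpha^*\sqrt t\}\subset D_u$, which carries the two precipitation fronts. The crux is the claim that, for $T_{\unique}$ small enough, each $u_i$ is non-decreasing in $t$ throughout this strip, so that $\sup_{s\in[0,t]}u_i(x,s)=u_i(x,t)$ and the relay condition \eqref{e.hhmo-p-weak-alternative} collapses, up to a null set, to the pointwise rule $p_i(x,t)=\I_{\{u_i(x,t)>u^*\}}$. Granting this, the two precipitation functions are ordered by $w$: pointwise $u_1\ge u_2$ implies $p_1\ge p_2$, whence $p_1u_1-p_2u_2\ge p_2\,w\ge0$, and the case $u_1\le u_2$ is symmetric. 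Thus $(p_1u_1-p_2u_2)\,w\ge0$ on the strip as well, and the energy estimate closes.

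I expect the monotonicity-in-time claim to be the main obstacle. I would derive it from the representation $(u_i-\psi)_t=-\Fc_1-u^*\Fc_2$ of Theorem~\ref{cont.theo}: on the strip a direct computation gives $\psi_t\ge c_0/t$ for a constant $c_0>0$ depending only on $\alpha,\beta,\alpha^*$, the correction $\Fc_1$ is bounded above by the universal constant of Corollary~\ref{c.ut-int-upper-bound} (using $u_t\le\psi_t$ from Corollary~\ref{c.ut-upper-bound}), and $u^*\Fc_2$ stays subdominant relative to $\psi_t\sim t^{-1}$ by a front-transversality estimate of the type \eqref{e.transversality1}. To obtain that transversality I would first observe that for short time $u_i$ is $C^1$-close to $\psi$, since the Duhamel correction and its spatial derivative are small when the precipitated region is small; as $\psi$ is strictly decreasing in $x$ across the strip, this gives $u_{i,x}<0$ at the front and hence the positive advance rate feeding \eqref{e.transversality1}. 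Combining, $u_{i,t}=\psi_t-\Fc_1-u^*\Fc_2>0$ for $t$ small. The delicate points are the possible non-differentiability of $u$ on $\Lambda_\normal$—handled through Theorem~\ref{cont.theo} and Lemma~\ref{Lambda.normal.cond} by passing to one-sided limits and a.e.\ statements, then integrating the a.e.\ sign of $u_{i,t}$ against continuity of $u_i$—and the uniformity of the smallness over all weak solutions; since every governing bound is universal, a single $T_{\unique}$ should work, and this is the claimed short uniqueness time.
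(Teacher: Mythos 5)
Your proposal is correct and follows essentially the same route as the paper's proof: an energy estimate reduced to the monotonicity inequality \eqref{e.monotonicity}, the splitting of $D_{\unique}$ into $D^*$, $D_o$, and the strip $\{\alpha\sqrt t<x<\alpha^*\sqrt t\}$ (the paper's essential domain $\ES$), and, as the decisive step, positivity of $u_t$ on that strip for short time, obtained exactly as in the paper from the representation $(u-\psi)_t=-\Fc_1-u^*\Fc_2$ of Theorem~\ref{cont.theo}, the short-time Duhamel estimate giving $u_x<0$, the resulting transversality lower bound on $\ell'$, and the uniform bounds on $\Fc_1$, $\Fc_2$, and $\psi_t$. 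The only cosmetic differences (testing with $w$ rather than $(u_1-u_2)_+$, and deducing $p_1=p_2=1$ on $D_o$ from the parabola-crossing bound rather than from membership in the first ring) do not change the substance of the argument.
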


\begin{proof}
A weak solution to \eqref{e.original} has at least one ring with a
width of at least $\ringwidth$, see Remark~\ref{one.ring}.  Moreover,
ignition of precipitation can appear only on some restricted domain,
the \emph{essential domain}
\begin{equation}
  \ES(t)=\{ (y,s) \colon \alpha\sqrt s < y <\alpha^{*}\sqrt s,
            0 < s < t\} \,,
\end{equation}
where $\alpha^*$ is defined by \eqref{alpha.star}.  The key step
in this proof is to show that there exists a positive time
$T_{\unique}$ such that $u_t>0$ on $\ES(T_{\unique})$ for any weak
solution $(u,p)$.  Once this is established, uniqueness up to time
$T_{\unique}$ follows by standard energy estimates.

First, we establish a negative upper bound for $u_x$.  Differentiating
the Duhamel formula \eqref{e.duhamel}, we obtain
\begin{align}
  u_x(x,t)
 & = \psi_x(x,t)
     - \int_0^t\int_{\R}\Phi_x(x-y,t-s) \, p(y,s) \, u(y,s) \, \d y\,\d s
     \notag\\
 & \le \psi_x(x,t)+\Psi(\alpha)\int_0^t\int_{\{\abs{y}\le
       \alpha^*\sqrt{s}\}}\abs{\Phi_x(x-y,t-s)} \, \d y\,\d s \,.
\end{align}
By direct computation,
\begin{equation}
  \psi_x(x,t) 
  \le - \frac{\alpha\beta}{2\sqrt t} \, 
        \e^{\tfrac{\alpha^2-\alpha^{*2}}4}
\end{equation}
on $\ES(t)$.  Further, since $\Phi_x(x,t) \in L^1(\R\times[0,T])$ for
all $T>0$, we observe that
\begin{equation}
  \lim_{t\searrow0} 
    \int_0^t\int_{\R}|\Phi_x(x-y,t-s)| \, \d y\,\d s = 0 \,.
\end{equation}
Thus, there exists $T_1>0$ such that, on $\ES(T_1)$, every weak
solution satisfies
\begin{equation}
  \label{T_1.def}
  u_x(x,t)
  \le - \frac{\alpha\beta}{4\sqrt t} \, 
        \e^{\tfrac{\alpha^2-\alpha^{*2}}4} \,.
\end{equation}
By Lemma~\ref{Lambda.normal.cond} together with
Theorem~\ref{cont.theo}, this implies that $(u-\psi)_t$ exists and is
given by \eqref{u-psi.der.form} on $\ES(T_1)$.

Second, we establish a lower bound on the growth of $\ell$.  We know
from Lemma~\ref{I.l.prop} \ref{i.prop.3} that $\ell$ is increasing on
$\R_+$.  By the Lebesgue differentiation theorem for monotonic
functions, $\ell$ is differentiable almost everywhere on
$[0,\ringwidth]$.  We denote the domain of differentiability by $U$.
Then, e.g.\ \cite[p.~108]{Folland:1999:RealA},
\begin{equation}
  \ell(y_2)-\ell(y_1)\ge\int_{[y_1,y_2]\cap U}\ell'(y) \, \d y
  \label{e.monotonic-ftc}
\end{equation} 
for all $0<y_1<y_2\le\ringwidth$.  Assuming that
$y \in (0,\ringwidth] \cap U$ with $\ell(y) \le T_1$, a computation
analogous to \eqref{l.deriv} yields
\begin{equation}
\label{l.der}
  \ell'(y)\ge -\frac{u_x(y,\ell(y))}{\psi_t(y,\ell(y))} \,.
\end{equation}
We also observe that, due to Lemma~\ref{u.psi},
\begin{equation}
  \label{l.lower.bound}
  \ell(y) \ge (y/\alpha^{*})^2 \,.
\end{equation}
Inserting \eqref{T_1.def} and \eqref{e.ut-upper-bound} into
\eqref{l.der}, then using \eqref{l.lower.bound} in a second step, we
estimate
\begin{align}
  \ell'(y)
  & \ge \biggl(\frac{C_\psi}{\ell(y)} \biggr)^{-1}
        \frac{\alpha\beta}{4\sqrt{\ell(y)}} \,
               \e^{\tfrac{\alpha^2-\alpha^{*2}}4}
    =   \frac{\alpha\beta}{4C_\psi} \, 
               \e^{\tfrac{\alpha^2-\alpha^{*2}}4} \,
        \sqrt{\ell(y)}
      \notag \\
  & \ge \frac{\alpha\beta}{4\alpha^* C_\psi} \, 
        \e^{\tfrac{\alpha^2-\alpha^{*2}}4} \, y
    =   2 \, C_\ell \, y
  \label{l.dev.bound}
\end{align}
with a constant $C_\ell$ which is independent of the weak solution
$(u,p)$.  Integrating \eqref{l.dev.bound} and recalling
\eqref{e.monotonic-ftc}, we obtain
\begin{equation}
  \ell(y_2)-\ell(y_1) \ge C_\ell \, (y_2^2-y_1^2) \,.
  \label{l.dev.bound-integrated}
\end{equation} 

Third, we obtain upper bounds on $\Fc_1$ and $\Fc_2$, hence, a lower
bound on $u_t$.  For $\Fc_1$, we estimate, invoking
Lemma~\ref{u-psi.non-incr} and Corollary~\ref{c.ut-int-upper-bound},
that
\begin{align}
  \Fc_1(x,t)
  & = \int_0^t \int_\R
      \Phi(x-y,t-s) \, p(u,s) \, u_t(y,s) \, \d y \, \d s 
      \notag \\
  & \le \int_0^t \int_{\R}
      \Phi(x-y,t-s) \, p(y,s) \, \psi_t(y,s) \, \d y \, \d s
      \notag \\ 
  & \le \alpha^* \, C_\psi \, \sqrt\pi \,. 
  \label{e.f1-upper-bound}
\end{align}
For $\Fc_2$, we restrict final time to
$T_2 = \min \{(\ringwidth/\alpha^*)^2, T_1 \}$.  Clearly, $T_2$ is
positive, independent of the weak solution $(u,p)$, and
\begin{equation}
  \label{T.star}
  \ell(\ringwidth) \ge T_2 \,.
\end{equation}
Setting
\begin{equation}
  a \equiv a(t) = \sup \{ y \colon \ell(y) \leq t \} 
\end{equation}
so that $a(t) \le \ringwidth$ and $\ell(a(t)) \le t$ due to the left-continuity
of $\ell$, see Lemma~\ref{I.l.prop}\ref{i.prop.3}. Using
\eqref{l.dev.bound-integrated}, we find that
\begin{equation}
  t-\ell(y) \ge \ell(a(t)) - \ell(y) \ge C_\ell \, (a^2-y^2)
\end{equation}
for all $y$ with $\ell(y)\le t$.  Thus, for all $x \in \R$ and
$t \in [0,T_2]$
\begin{align}
  \Fc_2(x,t)
  & = \int_I\Phi(x-y,t-\ell(y))\,\d y
    \le \frac1{\sqrt{4\pi}}\int_{-a}^a(t-\ell(y))^{-\tfrac12} \, \d y
    \notag\\
  & = \frac1{\sqrt{\pi}}\int_0^{a}(t-\ell(y))^{-\tfrac12} \, \d y
    \le \frac1{\sqrt{\pi C_\ell}} \int_0^a(a^2-y^2)^{-\tfrac12} \, \d y
    \notag\\
  & = \frac1{\sqrt{\pi C_\ell}} \, \sin^{-1} 
      \Bigl(\frac ya \Bigr)\bigg|^a_0
    = \frac12 \, \sqrt\frac{\pi}{C_\ell} \,.
  \label{e.f2-upper-bound}
\end{align}
On $\ES(T_2)$, we also have a lower bound on $\psi_t$,
\begin{equation}
  \psi_t(x,t) \ge \frac{c_\psi}t
  \label{e.psi-lower-bound}
\end{equation}
with
\begin{equation}
  c_\psi 
  = \frac{\alpha\beta}4 \, \e^{\tfrac{\alpha^2}4} \, 
    \min_{y\in[\alpha,\alpha^*]} y \, \e^{\tfrac{-y^2}4}
  > 0 \,.
\end{equation}
Altogether, inserting the bounds \eqref{e.f1-upper-bound},
\eqref{e.f2-upper-bound}, and \eqref{e.psi-lower-bound} into
\eqref{u-psi.der.form}, we obtain
\begin{align}
  u_t(x,t)
  & = \psi_t(x,t)-\Fc_1(x,t)-u^* \, \Fc_2(x,t) \notag\\
  & \ge \frac{c_\psi}t - \alpha^* \, C_\psi \, \sqrt\pi
        - \frac{u^*}2 \, \sqrt\frac{\pi}{C_\ell} \,.
  \label{u.dev.bound}
\end{align}
We conclude that for any weak solution, $u_t$ is strictly positive in
the interior of $\ES(T_{\unique})$, where
\begin{equation}
  \label{T.bar}
  T_{\unique} 
  = \min \biggl\{ T_2, c_\psi \Big/ 
    \Bigl( \alpha^* \, C_\psi \, \sqrt\pi 
    + \frac{u^*}2 \, \sqrt\frac{\pi}{C_\ell} \Bigr) 
    \biggr\} 
\end{equation}
independent of the weak solution $(u,p)$.

Now suppose that $(u_1,p_1)$ and $(u_2,p_2)$ are weak solutions of
\eqref{e.original}.  We claim that, on
$D_{\unique}=\R\times[0,T_{\unique}]$, 
\begin{equation}
  \label{e.claim}
  (p_1u_1-p_2u_2)(u_1-u_2)_+ \geq 0 \,.
\end{equation}
We prove this claim separately on three subdomains.  On
$D_o \cap D_{\unique}$, $p_1 = p_2 = 1$ because $T_{\unique}$ is
selected such that the $x$-projection of this set is included in the
first ring.  Hence, the claim is obvious.  On $D^* \cap D_{\unique}$
and on its symmetric counterpart in the left half-plane,
$p_1 = p_2 = 0$ due to Lemma~\ref{u.psi}; the claim is also obvious.
Finally, on $\ES(T_{\unique})$, we note that
$(p_1u_1-p_2u_2)(u_1-u_2)_+$ can be negative only if
$u_1(x,t)>u_2(x,t)$ and $p_1(x,t)<1$.  By Lemma~\ref{i.prop.7}, we may
assume that $p_1$ and $p_2$ are of the form \eqref{p.redefined}.
Therefore, $p_1(x,t)<1$ implies $u^*\ge u_1(x,t)$.  But then
$u^*\ge u_1(x,t)>u_2(x,t)$. Since $u_2$ is increasing in time on
$\ES(T_{\unique})$, we have $u^*>u_2(x,s)$ if
$(x,s)\in \ES(t)\subset \ES(T_{\unique})$.  So precipitation cannot
start at spatial coordinate $x$ until after time $t$, thus
$p_2(x,t)=0$.  Hence,
$(p_1u_1-p_2u_2)(u_1-u_2)_+=p_1u_1(u_1-u_2)_+\ge0$ at
$(x,t) \in \ES(T_{\unique})$.  This proves \eqref{e.claim}.

We complete the proof with a direct energy estimate.  Proceeding
formally (a first-principles justification can be found in
\cite{Darbenas:2018:PhDThesis}), we note that 
\begin{equation}
\label{u_1-u_2}
  (u_1-u_2)_t = (u_1-u_2)_{xx} - p_1 \, u_1 + p_2 \, u_2 \,,
\end{equation}
multiply with 
$(u_1-u_2)_+$, integrate in space and then integrate by parts, 
\begin{multline}
  \frac12\frac\d{\d t} \int_\R(u_1-u_2)_+^2 \, \d x \\
  = - \int_\R \I_{\{u_1>u_2\}} \,(u_1-u_2)_x^2 \, \d x
    - \int_\R (p_1u_1-p_2u_2)(u_1-u_2)_+ \, \d x \le 0 \,.
  \label{the.trick}
\end{multline}
Integrating in time with $u_1(x,0)-u_2(x,0)=0$, we find that
$u_2\ge u_1$ on $D_{\unique}$.  As the argument is symmetric under
exchange of indices, we also have the reverse inequality, so $u_1=u_2$
on $D_{\unique}$.  An easy argument shows that the precipitation
function is essentially determined by the concentration field (e.g.\
\cite[Lemma~3]{DarbenasHO:2018:LongTA}), hence $p_1 = p_2$ a.e.\ on
$D_{\unique}$.
\end{proof}

\begin{lemma} \label{l.transversality2}
Let $(u,p)$ be a weak solution to \eqref{e.original} with ring domain
$\RD$.  Suppose that there exists $X \leq X^*$ such that
\begin{equation}
  \label{e.transversality2}
  \limsup_{k \searrow 0} \Delta_k^- u(x,t)
  \equiv \limsup_{k \searrow 0} \frac{u(x,\ell(x))-u(x,\ell(x)-k)}k
  > 0
\end{equation} 
for all $x \in D \equiv I(u) \cap (0,X)$.  Then the one-sided
derivatives $u_{x+}(x,\ell(x))$ and $u_{t-}(x,\ell(x))$ exist for all
$x \in D$ with $u_{x+}(x,\ell(x))<0$ and $u_{t-}(x,\ell(x))>0$.
\end{lemma}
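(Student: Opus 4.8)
The plan is to reduce the whole statement to the single strict inequality $u_{x+}(x,\ell(x))<0$; once that is in hand, everything else follows from the results already proved. Indeed, $u_{x+}(x,\ell(x))<0$ is precisely the hypothesis \eqref{F.2.cont.cond} of Lemma~\ref{Lambda.normal.cond}, so $\Fc_2$ is continuous at $(x,\ell(x))$, and Theorem~\ref{cont.theo} then makes $u-\psi$ time-differentiable in a neighborhood of $(x,\ell(x))$ with $(u-\psi)_t$ continuous there. Hence $u_t(x,\ell(x))$ exists as a genuine two-sided time derivative; in particular $u_{t-}(x,\ell(x))$ exists and the difference quotients $\Delta_k^-u(x,\ell(x))$ converge, so their common limit equals $\limsup_{k\searrow0}\Delta_k^-u(x,\ell(x))$, which is strictly positive by \eqref{e.transversality2}. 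This gives $u_{t-}(x,\ell(x))>0$.

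Existence of $u_{x+}(x,\ell(x))$ and the non-strict bound $u_{x+}(x,\ell(x))\le0$ I would obtain from regularity together with the geometry of the front. Since $u-\psi\in C^{1,0}$ and $\psi$ is smooth on $\overline{D_u}$ up to $\Pc$ from below, $u_x$ extends continuously to $(x,\ell(x))$ along approaches through $D_u$; approaching along the level $t=\ell(x)$ from the right (which lies in $D_u$, and in fact in $S$, because $\ell$ is strictly increasing by Lemma~\ref{I.l.prop}) shows $u_{x+}(x,\ell(x))=\lim_{h\searrow0}u_x(x+h,\ell(x))$ exists. Moreover, for $h>0$ the point $(x+h,\ell(x))$ lies strictly below the front, so $u(x+h,\ell(x))<u^*=u(x,\ell(x))$ by Lemma~\ref{I.l.prop}, and letting $h\searrow0$ yields $u_{x+}(x,\ell(x))\le0$. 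For a degenerate point $(x,\ell(x))\in\Lambda_\deg\subset\Pc$ only the one-sided limit of $u_x$ from the $S$-side is available, but that is all this step uses.

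The heart of the matter, and the step I expect to be the main obstacle, is upgrading this to $u_{x+}(x,\ell(x))<0$. The key structural fact is that on $S$ precipitation vanishes and there is no source, so $u$ solves the plain heat equation $u_t=u_{xx}$ there; equivalently $v:=u^*-u$ is a nonnegative caloric function on $S$ that vanishes on the front and, by the strong maximum principle, is strictly positive in the interior. The transversality hypothesis \eqref{e.transversality2} says exactly that $v$ decays at most linearly in time as the front is approached vertically, i.e.\ $v(x,\ell(x)-k_j)\ge c\,k_j$ along some sequence $k_j\searrow0$. If $u_{x+}(x,\ell(x))=0$, then $v_x(x,\ell(x))=0$ as well, and the heuristic expansion $v\approx a\,(\ell(x)-t)+b\,(x'-x)^2$ together with $v_t=v_{xx}$ forces $b=-\tfrac12 a<0$, so $v$ would become negative on the level $t=\ell(x)$ just to the right of $x$, contradicting $v>0$ on $S$. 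I would make this rigorous by a Hopf-type parabolic comparison on a one-sided box $(x,x+r)\times(\ell(x)-r^2,\ell(x))\subset S$: a suitable caloric lower barrier that is dominated by $v$ on the parabolic boundary yields $v(x+h,\ell(x))\ge\beta\,h$ with $\beta>0$, which is incompatible with $u_{x+}=0$ and forces $u_{x+}(x,\ell(x))<0$.

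The delicate point in this comparison is that \eqref{e.transversality2} supplies the linear lower bound only along a sequence of time levels, not uniformly, and that the dangerous region is the top-right corner of the box, where a flat front (the case $u_{x+}=0$) brings the front arbitrarily close and makes $v$ small; controlling $v$ there is exactly what the transversality rate $c$ must buy. I therefore anticipate anchoring the barrier on parabolically scaled boxes built around the ``good'' times $\ell(x)-k_j$ and verifying, using strict monotonicity and left-continuity of $\ell$ from Lemma~\ref{I.l.prop}, that $S$ admits the required interior region; the nearby singular source on $\Pc$ in the $\Lambda_\deg$ case calls for minor additional bookkeeping but no new idea. Once $u_{x+}(x,\ell(x))<0$ is secured for every $x\in D$, the assertions assembled in the first paragraph complete the proof.
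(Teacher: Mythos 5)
The reduction in your first paragraph is correct and coincides with the paper's own reduction: once $u_{x+}(x,\ell(x))<0$ is known, condition \eqref{F.2.cont.cond} of Lemma~\ref{Lambda.normal.cond} gives continuity of $\Fc_2$, Theorem~\ref{cont.theo} gives time-differentiability of $u-\psi$ with continuous derivative at $(x,\ell(x))$, and the transversality hypothesis \eqref{e.transversality2} upgrades the resulting limit to $u_{t-}(x,\ell(x))>0$. (One small correction: on $\Pc$ the derivative is genuinely one-sided, since $\psi_t$ jumps across the parabola; only $(u-\psi)_t$ is two-sided there.) Your second paragraph, establishing existence of $u_{x+}$ and the non-strict inequality $u_{x+}\le 0$, is also fine.

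The gap is in the core step, and it is fatal as proposed. Your caloric comparison on the one-sided box $(x,x+r)\times(\ell(x)-r^2,\ell(x))\subset S$ needs quantitative positive lower bounds for $v=u^*-u$ on the parabolic boundary of the box, and the hypotheses supply none: you know only $v>0$ in $S$ (with no modulus), plus the linear bound $v(x,\ell(x)-k_j)\ge c\,k_j$ at the \emph{single} spatial point $x$ along a \emph{sequence} of times. The dangerous configuration---the front $\ell$ creeping along just above the level $t=\ell(x)$ for $y>x$---is exactly the configuration you are trying to exclude, and it makes $v$ uncontrollably small on the upper part of the right edge of every such box; since $\ell$ is only known to be strictly increasing, with no lower bound on its growth to the right of $x$, no choice of $r$ or of ``good'' times $\ell(x)-k_j$ escapes this (the paper states explicitly that its Case~3 ``cannotot be solved by a local argument, as we have no lower bound on the growth of $\ell$'' analogous to \eqref{e.growth-bound}, which is available only on $\Pc$). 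Attempting to rescue the anchoring by propagating the sequence bound forward in time with the interior parabolic Harnack inequality also fails quantitatively: Harnack chains issuing from $(x,\ell(x)-k_j)$ that stay inside $S$ reach the level $t=\ell(x)$ only at horizontal distance of order $\sqrt{k_j}$ from $x$, yielding $v(x+h,\ell(x))\gtrsim h^2$, which is perfectly consistent with $u_{x+}(x,\ell(x))=0$; and making the heuristic expansion $v\approx a(\ell(x)-t)+b(y-x)^2$ rigorous presupposes $C^{2,1}$ regularity of $v$ up to the corner, which is essentially the conclusion. The paper's proof is structurally different: it is a continuation argument along the front, starting from the initial interval of Remark~\ref{r.initial}, in which the inductive hypothesis $u_t>0$ on the \emph{earlier} part of the front is fed into a maximum principle for $v=u_t$ (solving $v_t=v_{xx}-v$ on the precipitated region $A_2$) to bound $u_t$ there, after which the non-local Duhamel representation yields finiteness of $\Fc_2(x,\ell(x))$ and the inequality $0<\limsup_{k\searrow 0}\Delta_k^-u \le \psi_{t-}-\Fc_1-u^*\Fc_2$ at the point in question; only the on-parabola cases admit local arguments. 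This global, non-local structure is precisely why \eqref{e.transversality2} is assumed on all of $D$ rather than at a single point, a feature your pointwise argument cannot exploit.
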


\begin{remark}
In contrast to the local statement in Lemma~\ref{Lambda.normal.cond},
the transversality condition \eqref{e.transversality2} here must be
satisfied globally on the domain $I(u) \cap (0,X)$.
\end{remark}

\begin{remark}
At points $(x,\ell(x))$ on the precipitation boundary that do not lie
on the parabola $\Pc$, the one-sided derivatives in
Lemma~\ref{l.transversality2} are regular two-sided derivatives.  For
$u_x$, this follows directly from the concept of weak solution, for
$u_t$, this is a consequence of Lemma~\ref{Lambda.normal.cond}.
\end{remark}

\begin{remark} \label{r.initial} In the proof of
Theorem~\ref{u.uniqueness}, we have already proved that classical
first derivatives exist, with $u_{x}(x,\ell(x))<0$ and
$u_{t}(x,\ell(x))>0$, on the part of the precipitation boundary
contained in $D_{\unique}$.
\end{remark}

\begin{remark}
So long as one of the transversality conditions from
Lemma~\ref{l.transversality2} or Lemma~\ref{Lambda.normal.cond} is
satisfied, thus at least for some initial interval of time, $u$ is
continuously differentiable in time away from the parabola $\Pc$.
Thus, the discontinuity of the precipitation term in the HHMO-model must
be balanced by a discontinuity of $u_{xx}$ across the precipitation
boundary.  This behavior is not obvious from a direct inspection of
the PDE.
\end{remark}

\begin{proof}
Take $x\in(0,X^*)$ such that for all $y \in I(u) \cap (0,x)$, the
one-sided derivatives $u_{x+}(y,\ell(y))$ and $u_{t-}(y,\ell(y))$
exist with $u_{x+}(y,\ell(y))<0$ and $u_{t-}(y,\ell(y))>0$.  (Such an
$x$ exists, see Remark~\ref{r.initial}.) Suppose further that the
transversality condition \eqref{e.transversality2} remains satisfied
at $x$.  We shall show that this implies that $u_{x+}(y,\ell(y))$ and
$u_{t-}(y,\ell(y))$ exist with $u_{x+}(y,\ell(y))<0$ and
$u_{t-}(y,\ell(y))>0$ in a neighborhood of $x$ that is relatively open
in $I(u) \cap (0,X)$.  This implies the lemma as stated.

In the following, set $t=\ell(x)$.  Our main the main task is to show
that $u_{x+}(x,t)<0$, a claim which we prove in three distinct cases
below.  Once this is established, Lemma~\ref{Lambda.normal.cond}
implies that $(x,t)$ is a point of continuity of $(u-\psi)_t$; in
particular, $u_{t-}(x,t)$ is defined and is positive.  When $x$ is the
right boundary point of a ring, this is all we have to show.
Otherwise, we assert that $\ell$ is right-continuous at $x$.  Indeed,
when $(x,t) \in \Pc$, this is trivial.  When $(x,t) \notin \Pc$,
$u_t(x,t)$ is defined and strictly positive, so that $u(x,t+k)>u^*$
for every sufficiently small $k>0$, $(x,t) \notin \Lambda_\jump$, and
$\ell$ is continuous at $(x,t)$.  Right-continuity of $\ell$ at $x$
implies that the one-sided derivatives exist $u_{x+}(y,\ell(y))$ and
$u_{t-}(y,\ell(y))$ exist with their signs preserved in a right
neighborhood of $x$, which completes the argument.

\begin{case}
$u_{x+}(x,t)<0$ if $(x,t) \in \Pc$ and $x$ is not the left boundary
point of a ring.
\end{case}

Take $h>0$ small enough so that $x-h$ is contained in the same ring.
As in the proof of Lemma~\ref{Lambda.normal.cond}, 
\begin{equation}
  u(x,t) = u^* = u(x-h,\ell(x-h)) \,,
\end{equation}
so that
\begin{equation}
  \frac{\ell(x) - \ell(x-h)}h \cdot
  \frac{u(x,t) - u(x,\ell(x-h))}{\ell(x) - \ell(x-h)}
  = - \frac{u(x,\ell(x-h))-u(x-h,\ell(x-h))}h \,.
  \label{e.splitting5}
\end{equation}
Noting that $\ell(x) = x^2/\alpha^2$ and
$\ell(x-h) \leq (x-h)^2/\alpha^2$, so that
$\ell(x) - \ell(x-h) \geq ({2xh} - {h^2})/{\alpha^2}$, we find that
for $h$ sufficiently small,
\begin{equation}
  \frac{\ell(x) - \ell(x-h)}h \geq
  \frac{x}{\alpha^2} > 0 \,.
  \label{e.growth-bound}
\end{equation}
By Lemma~\ref{I.l.prop}\ref{i.prop.3}, $\ell$ is left-continuous and
strictly increasing.  Due to the transversality condition
\eqref{e.transversality2}, this implies that
\begin{equation}
  \limsup_{h \searrow 0}
  \frac{u(x,t) - u(x,\ell(x-h))}{\ell(x) - \ell(x-h)} > 0 \,.
  \label{e.limsup2}
\end{equation}
Last, as $\ell$ is strictly increasing, the open line segment
$\{ (\xi, \ell(x-h)) \colon x-h < \xi < x \}$ lies below the
precipitation boundary for every such $h$.  Since $u_x$ is continuous
on this line segment, the mean value theorem yields
\begin{equation}
  \frac{u(x-h,\ell(x-h)) - u(x, \ell(x-h))}h
  = u_x (\xi(h), \ell(x-h))
\end{equation}
for some $\xi(h) \in (x-h,x)$.  Using left-continuity of $\ell$ and
the fact that $u-\psi$ is continuously differentiable in $x$, we find
\begin{equation}
  \lim_{h \searrow 0} \frac{u(x-h,\ell(x-h)) - u(x, \ell(x-h))}h
  = u_{x+}(x,t) \,.
  \label{e.uxplus}
\end{equation}
Thus, letting $h \searrow 0$ in \eqref{e.splitting5} and referring to
\eqref{e.growth-bound}, \eqref{e.limsup2}, and \eqref{e.uxplus} for each of
the terms, we conclude that $u_{x+}(x,t)<0$.

\begin{case}
$u_{x+}(x,t)<0$ if $x=X_{2i}$, i.e., $x$ is the starting location of a
ring.
\end{case}

In this case, $(x,t) \in \Pc$ by Lemma~\ref{I.l.prop}\ref{i.prop.3a}
and the location of the singularity of the heat kernel in the Duhamel
integral is bounded away from the effective domain of integration, so
that we can differentiate the Duhamel formula directly to find
\begin{equation}
  u_{t-}(x,t)
  = \psi_{t-}(x,t)
    - \int_0^t\int_\R \Phi_t(x-y,t-s) \, p(y,s) \,
        u(y,s) \, \d y \, \d s \,.
\end{equation}
This shows that $u_{t-}$ exists and is continuous on the ray
$[x,\infty)\times\{t\}$.  To proceed, we recall that $\ell$ is
increasing and left-continuous, so that on any box with upper left
corner $(x,t)$, $p=0$ so that $u$ solves the heat equation
$u_t=u_{xx}$.  Then, by the Taylor formula with integral remainder,
\begin{equation}
  u(x+h,t) = u(x,t) + u_{x+}(x,t) \, h
    + \int_{x}^{x+h} (x+h-\xi) \, u_t(\xi,t) \, \d \xi \,.
  \label{e.taylor}
\end{equation}
Since $u^* = u(x,t) \geq u(x+h,t)$ and the integral in
\eqref{e.taylor} is strictly positive for $h$ small enough due to
continuity of $u_{t-}$ and the transversality condition
\eqref{e.transversality2}, we conclude that $u_{x+}(x,t)<0$.

\begin{case}
$u_{x+}(x,t)<0$ if $(x,t) \notin \Pc$.
\end{case}

\begin{figure}
\centering
\includegraphics{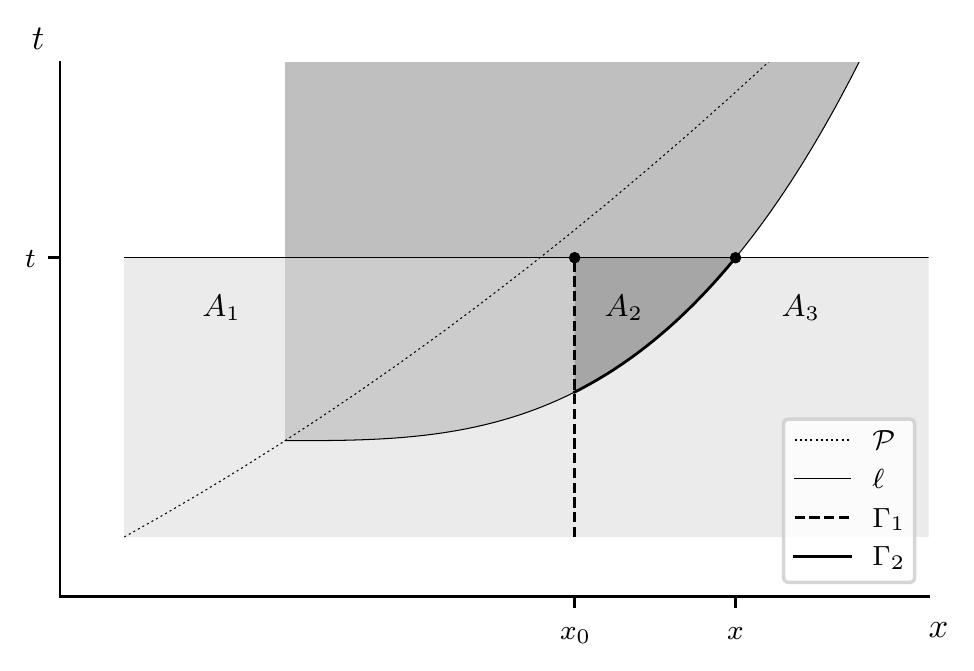}
\caption{Sketch of the geometry of the construction used in the proof
of Lemma~\ref{l.transversality2}, Case~3.}
\label{f.transversality-sketch}
\end{figure}

This case cannot be solved by a local argument, as we have no lower
bound on the growth of $\ell$ as in \eqref{e.growth-bound}.  We take
$x_0<x$ large enough such that $x_0$ is the same ring as $x$ and
$(x_0,t)$ lies below the parabola $\Pc$.  We split the space-time
domain into three subregions, see
Figure~\ref{f.transversality-sketch}:
\begin{subequations}
\begin{gather}
  A_1 = (-\infty,x_0) \times (0,t) \,, \\
  A_2 = \{ (y,s) \colon x_0<y<x, \ell(y) < s \leq t \} \,, \\
  A_3 = \{ (y,s) \colon x_0<y, 0 < s < \min \{ \ell(y), t \} \} \,.
\end{gather}
\end{subequations}

We now proceed in three steps.  In the first step, we show that $u_t$
is bounded on $A_2$.  By Lemma~\ref{u-psi.non-incr}, $u_t$ is bounded
above, so it suffices to find a lower bound.  We first note that on
$\Gamma_1$, the right boundary of $A_1$, $u_t$ is continuous up to the
boundary points, hence is bounded.  On $\Gamma_2$, the joint boundary
of $A_2$ and $A_3$ we have $u_t>0$ by assumption except perhaps at the
end point $(x,t)$ where we do not know yet whether $u_t$ is defined.
(Recall that the continuation argument implies that $\ell$ is
continuous at every $y<x$, so that every point on $\Gamma_2$ is of the
form $(y, \ell(y))$, thus covered by the transversality condition
\eqref{e.transversality2}.)  Noting that $v = u_t$ satisfies the
equation $v_t = v_{xx} - v$ on $A_2$, we invoke the parabolic maximum
principle to conclude that $v$ is bounded on $A_2$.  (A similar
argument can be made on $A_3$ where $v$ satisfies the heat equation,
but this will not be necessary in the following as $p=0$ on this
region.)

In the second step, we show that
\begin{equation}
  0 < \limsup_{k \searrow 0} \Delta_k^{-} u(x,t)
  \leq \psi_{t-} (x,t) - \Fc_1 (x,t) - u^* \, \Fc_2 (x,t) \,.
  \label{e.udeltakm}
\end{equation}
This inequality implies, in particular, that $\Fc_2 (x,t)$ is finite.
The left inequality is simply restating the temporal transversality
condition \eqref{e.transversality2}.  To prove the right inequality in
\eqref{e.udeltakm}, we take an arbitrary $r \in (x_0,x)$.  Recalling
the Duhamel formula \eqref{e.duhamel}, splitting the spatial domain of
integration, changing the time variable in the integral corresponding
to the right spatial subdomain, and noting that, by
Lemma~\ref{i.prop.7}, $p$ is non-decreasing in time, we find, for
$k \geq 0$, that
\begin{align}
  u(x,t)
  & \leq \psi(x,t) - \int_0^t \int_r^\infty \Phi(x-y, s) \,
           p(y,t-s-k) \, u(y,t-s) \, \d y \, \d s
         \notag \\
  & \quad - \int_0^t \int_{-\infty}^r \Phi(x-y, t-s) \,
           p(y,s) \, u(y,s) \, \d y \, \d s \,.
\end{align}
(We imply that $p(x,t)=0$ for $t < 0$.)  Similarly,
\begin{align}
  u(x,t-k)
  & = \psi(x,t-k) - \int_0^t \int_r^\infty \Phi(x-y, s) \,
           p(y,t-s-k) \, u(y,t-s-k) \, \d y \, \d s
         \notag \\
  & \quad - \int_0^t \int_{-\infty}^r \Phi(x-y, t-s-k) \,
           p(y,s) \, u(y,s) \, \d y \, \d s \,.
\end{align}
(As before, we understand that $\Phi(x,t)=0$ for $t<0$.)  Then
\begin{align}
  \Delta_k^- u(x,t)
  & \leq \Delta_k^- \psi(x,t)
    -  \int_0^t \int_r^\infty \Phi(x-y, s) \,
           p(y,t-s-k) \, \Delta_k^- u(y,t-s) \, \d y \, \d s
         \notag \\
  & \quad - \int_0^t \int_{-\infty}^r \Delta_k^- \Phi(x-y, t-s) \,
           p(y,s) \, u(y,s) \, \d y \, \d s \,.
\end{align}
We now take the limit $k \searrow 0$ and apply the dominated
convergence theorem to each of the integrals.  For the first integral,
existence of a dominating function follows from boundedness of $u_t$
on $A_2$ and the fact that $p=0$ on $A_3$. For the second integral, we
note that the domain of integration is bounded away from the
singularity of the heat kernel and that $p$ is compactly supported.
Thus,
\begin{align}
  \limsup_{k\searrow 0}
  \Delta_k^- u(x,t)
  & \leq \psi_{t-}(x,t)
    -  \int_0^t \int_r^\infty \Phi(x-y, s) \,
           p(y,t-s) \, u_t(y,t-s) \, \d y \, \d s
         \notag \\
  & \quad - \int_0^t \int_{-\infty}^r \Phi_t(x-y, t-s) \,
           p(y,s) \, u(y,s) \, \d y \, \d s
         \notag \\
 & = \psi_{t-}(x,t) - \Fc_1(x,t) - u^* \int_{-\infty}^{r} \I_{I}(y) \,
         \Phi(x-y,t-\ell(y)) \, \d y \,,
\end{align}
where the last equality is due to integration by parts as in Step~3 in
the proof of Theorem~\ref{cont.theo}.  Letting $r \nearrow x$, we
obtain \eqref{e.udeltakm} by monotone convergence.

Finally, as the point $(x+h,t)$ lies below $\Lambda_\normal$ and below
the parabola $\Pc$, Theorem~\ref{cont.theo} applies, i.e.,
\begin{equation}
  u_t(x+h,t) = \psi_t(x+h,t) - \Fc_1(x+h,t) - u^* \, \Fc_2(x+h,t) \,.
  \label{e.ut_xph}
\end{equation}
Noting that $\psi_t$ is right-continuous in $x$, $\Fc_1$ is continuous
at $(x,t)$ (the convolution restricted to $A_2 \cup A_3$ is continuous
as a convolution of an $L^1$ with an $L^\infty$ function as $p \, u_t$
is bounded; the convolution restricted to $A_1$ is continuous as the
singularity of the kernel is located away from the support of the
integrand), and $\Fc_2(x+h,t)$ is monotonically increasing and bounded
by $\Fc_2(x,t)$, so that
\begin{equation}
  \liminf_{h \searrow 0} u_t(x+h,t)
  \geq \psi_{t-}(x,t) - \Fc_1(x,t) - u^* \, \Fc_2 (x,t) \,.
\end{equation}
Using \eqref{e.udeltakm}, we find that the right hand side is strictly
positive.  This shows that the integral in \eqref{e.taylor} is
strictly positive for $h$ small enough, so that we can finish the
proof as in Case~2.  This concludes the proof of
Lemma~\ref{l.transversality2}.
\end{proof}

\begin{remark}
Under the conditions of Lemma~\ref{l.transversality2}, it is easy to
show that $\ell$ is continuously differentiable on
$I(u) \setminus \{0\}$ with
\begin{equation}
  \ell'(x) = -\frac{u_{x+}(x,\ell(x))}{u_{t-}(x,\ell(x))} \,.
  \label{e.ellprime}
\end{equation}
Indeed, the continuation argument in the proof of
Lemma~\ref{l.transversality2} yields continuity of $\ell$ on $I(u)$.
Further, as $u=u^*$ on the precipitation boundary,
\begin{equation}
  u(x, \ell(x)) = u^* = u(x-h, \ell(x-h))
\end{equation}
and therefore
\begin{equation}
  \frac{\ell(x)-\ell(x-h)}h \cdot
  \frac{u(x, \ell(x)) - u(x,\ell(x-h))}{\ell(x)-\ell(x-h)}
  = - \frac{u(x,\ell(x-h)) - u(x-h, \ell(x-h))}h
\end{equation}
Since $u_{x+}$ is continuous, the right hand fraction converges to
$u_{x+}(x,t)$ as $h \searrow 0$ by the mean value theorem.  By
continuity of $\ell$, the second fraction on the left converges to
$u_{t-}$, which is non-zero by Lemma~\ref{l.transversality2}.  This
proves that the $\ell_{x-}$ satisfies \eqref{e.ellprime}; the argument
for $\ell_{x+}$ is similar.
\end{remark}


\begin{theorem}[Conditional uniqueness] \label{t.cond-uniqueness}
Suppose $(u_1,p_1)$ and $(u_2,p_2)$ are two weak solutions to the
HHMO-model \eqref{e.original} with ring domains $\RD_1$ and $\RD_2$,
respectively.  Assume that $u_2$ satisfies the temporal transversality
condition \eqref{e.transversality2} with
$X \leq \min \{X_1^*, X_2^* \}$, where $X_1^*$ and $X_2^*$ are the
respective spatial extents of precipitation on the two ring domains.
Then $u_1 = u_2$ on $\R \times (0,(X/\alpha)^2)$ and $p_1=p_2$ a.e.\
on this domain.
\end{theorem}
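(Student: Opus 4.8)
The plan is to combine the energy method from the proof of Theorem~\ref{u.uniqueness} with a continuation argument in time, using the transversality hypothesis on $u_2$ to prevent entanglement of the two precipitation fronts. I would set $T^* = \sup \{ T \colon u_1 = u_2 \text{ on } \R \times (0,T) \}$ and note $T^* > 0$ by Theorem~\ref{u.uniqueness}. Assuming for contradiction that $T^* < (X/\alpha)^2$, continuity of $u_1 - \psi$ and $u_2 - \psi$ from Definition~\ref{weak.sol.def}\ref{weak.ii} upgrades the agreement to the closed strip $\R \times (0,T^*]$; since a weak solution is essentially determined by its concentration field (Remark~\ref{RD.indep}), the precipitation functions and, via Lemma~\ref{i.prop.7} and Lemma~\ref{I.l.prop}, the fronts $\ell_1$ and $\ell_2$ then coincide for every spatial location that ignites before time $T^*$.

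The heart of the argument is to re-establish the monotonicity inequality $(p_1 u_1 - p_2 u_2)(u_1 - u_2)_+ \geq 0$ a.e.\ on a slightly larger strip $\R \times (0, T^* + \eps)$, after which the energy estimate \eqref{the.trick} closes exactly as in Theorem~\ref{u.uniqueness} and contradicts the maximality of $T^*$. Using the canonical form \eqref{p.redefined}, the product can be negative only at a point $(x,t)$ where, say, $u_1(x,t) > u_2(x,t)$, $p_1(x,t) = 0$, and $p_2(x,t) = 1$; then $p_1 = 0$ forces $u_1(x,t) \leq u^*$ by Lemma~\ref{I.l.prop}\ref{i.prop.1}, so that $u_2(x,t) < u_1(x,t) \leq u^*$, while $p_2(x,t) = 1$ means $t > \ell_2(x)$. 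I would split this bad set according to the ignition time $\ell_2(x)$. If $\ell_2(x) < T^*$, the boundary point $(x,\ell_2(x))$ lies in the agreement strip, so $\ell_1(x) = \ell_2(x) < t$, forcing $p_1(x,t) = 1$, a contradiction. Hence every bad point must ignite in the narrow window $\ell_2(x) \in [T^*, t) \subset [T^*, T^* + \eps)$.

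For such recently-ignited bad points off the parabola $\Pc$, I would invoke the transversality hypothesis through Lemma~\ref{l.transversality2} and Lemma~\ref{Lambda.normal.cond}: together with Theorem~\ref{cont.theo} they give that $(u_2 - \psi)_t$ is continuous and $u_{2,t} > 0$ across the front near $(x,\ell_2(x))$, so $u_2(x,t) > u^*$ for $t - \ell_2(x)$ small and positive, contradicting $u_2(x,t) < u^*$ once $\eps$ is chosen small enough. The required uniformity of this estimate over the frontier follows from continuity of $u_{2,t}$ on a fixed neighborhood of the leading point $(\alpha \sqrt{T^*}, T^*)$, together with the left-continuity and strict monotonicity of $\ell_2$ established in Lemma~\ref{I.l.prop}\ref{i.prop.3}.

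The remaining case, which I expect to be the main obstacle, is the leading edge where ignition occurs on $\Pc$ itself. Here $u_2$ is non-increasing for $t > \ell_2(x)$ because $(x,t) \in D_o$ and $\psi$ is constant there, so the positivity-of-$u_{2,t}$ argument fails and $u_2 < u^*$ just above the front is genuinely possible. I would treat this by exploiting that the crossing is transversal: near $\Pc$ the concentration rises through $u^*$ at a non-vanishing rate, so---exactly as in the transversal ODE example of the introduction, where both relays are forced to switch---advancing the spatial location past $\alpha\sqrt{T^*}$ forces both precipitation functions to switch simultaneously. Combined with property~(P), which freezes $p$ above the parabola, this yields $p_1^*(x) = p_2^*(x) = 1$ for $x$ just beyond $\alpha\sqrt{T^*}$ and removes the bad set, up to $\Pc$ itself, which is a null set and hence irrelevant to \eqref{the.trick}. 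With the monotonicity inequality in hand, the energy estimate gives $u_1 = u_2$ on $\R \times (0, T^* + \eps)$, contradicting the definition of $T^*$; thus $T^* \geq (X/\alpha)^2$, and equality of the precipitation functions follows as in Theorem~\ref{u.uniqueness}.
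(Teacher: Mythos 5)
Your continuation setup and your treatment of bad points away from the parabola are sound and match the paper's mechanism: there too, the transversality hypothesis enters through Lemma~\ref{l.transversality2} together with Lemma~\ref{Lambda.normal.cond} and Theorem~\ref{cont.theo} to make $u_{2,t}$ continuous and positive across $u_2$'s front, which is incompatible with $u_2<u^*$ just after ignition. The genuine gap is exactly the case you flag as the main obstacle and then resolve by assertion: for bad points in $D_o$ with ignition on $\Pc$, the claim that a transversal crossing ``forces both precipitation functions to switch simultaneously'' \emph{is} the uniqueness statement at the leading edge, and the ODE analogy from the introduction is not a proof. Nothing in the hypotheses gives you any handle on $u_1$ or $p_1$ there: condition \eqref{e.transversality2} is assumed only for $u_2$, and only along $u_2$'s own front. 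Moreover, property (P) is not a hypothesis of Theorem~\ref{t.cond-uniqueness}, so it may not be invoked (inside a ring domain the correct substitute is the canonical form of Lemma~\ref{i.prop.7}). The paper closes this case by a completely different, topological argument, which is the key idea missing from your proposal: it first proves an ``entanglement'' dichotomy at the spatial divergence point $x^*$ using the parabolic comparison principle (if the fronts were ordered near $x^*$, say $\ell_1\le\ell_2$, then $p_1\ge p_2$, hence $u_1\le u_2$, hence $\ell_1\ge\ell_2$, forcing equality of the fronts); it then observes that bad points lying in $D_o$ have $p_1=0$ above the parabola and therefore sit in interrings (gaps) of $u_1$, while entanglement supplies rings of $u_1$ arbitrarily close to $x^*$, so infinitely many gaps of $u_1$ accumulate at the interior point $x^*$ --- which Definition~\ref{ring.domain} forbids. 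Without a substitute for this argument, your proof does not close.

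A second gap: the energy step cannot ``close exactly as in Theorem~\ref{u.uniqueness}''. There, \emph{both} solutions satisfy $u_t>0$ on the essential domain, so \eqref{e.claim} holds with the indices exchanged by symmetry. Here the hypothesis is asymmetric, and your bad-set analysis only treats the failure of $(p_1u_1-p_2u_2)(u_1-u_2)_+\ge0$, i.e.\ points with $p_1=0$, $p_2=1$. Establishing that one inequality a.e.\ yields, via \eqref{the.trick}, only $u_1\le u_2$ on the strip; for the reverse inequality you would face the set where $p_1=1$, $p_2=0$, and $u_1<u_2\le u^*$ --- points above $u_1$'s front but below $u_2$'s --- where transversality of $u_2$ says nothing. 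This is repairable (from $u_1\le u_2$ deduce $\ell_1\ge\ell_2$, hence $p_1\le p_2$, hence $u_1\ge u_2$ by the comparison principle, which is precisely the loop the paper runs inside its entanglement claim), but the step is absent from your write-up. A minor further inaccuracy: the accumulation point of your bad set is $(\sup\{x\colon\ell_2(x)\le T^*\},\,T^*)$, which lies on or below $\Pc$ but in general strictly to the right of $(\alpha\sqrt{T^*},T^*)$, since the precipitation front runs ahead of the source parabola; the hard case is not ``the leading point is on $\Pc$'' generically, but the degenerate situation where the front meets $\Pc$ exactly at time $T^*$.
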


\begin{proof}
Suppose the contrary.  Then there exists
$t^* \in [T_\unique, (X/\alpha)^2)$ such that $u_1=u_2$ on
$\R \times [0,t^*]$ and $t^*$ is maximal with this property.  By
uniqueness of solutions for linear parabolic equations, the
concentrations $u_1$ and $u_2$ can only differ at time $t$ if the
precipitation functions $p_1$ and $p_2$ differ on a subset of
$\R \times [0,t]$ of positive space-time measure.  Further, by
Lemma~\ref{i.prop.7}, $p_1$ and $p_2$ are essentially determined by
the respective precipitation fronts $\ell_1$ and $\ell_2$, and we
assume their canonical representation given by \eqref{p.redefined}
henceforth.  Thus, there must be $x^* < X$ such that
$\ell_1(x)=\ell_2(x)$ for $x \leq x^*$ and $\ell_1(x) \neq \ell_2(x)$
for some $x$ in every right neighborhood of $x^*$.  (For ease of
notation, we take $\ell_i(x) = \infty$ if $x \notin I(u_i)$.)

We claim that $\ell_1$ and $\ell_2$ are ``entangled'' in the sense
that in every right neighborhood of $x^*$ there exist points where
$\ell_1<\ell_2$ as well as points where $\ell_2<\ell_1$.  If not,
there were a right neighborhood $[x^*, x^*+\eps)$ on which the
precipitation fronts were ordered, $\ell_1 \leq \ell_2$, say, with
strict inequality somewhere in every right neighborhood of $x^*$; by
maximality of $t^*$ and monotonicity of $\ell_1$,
$\ell_1(x^*+h) \searrow t^*$ as $h \searrow 0$. But then $p_1 \geq p_2$
so that $u_1 \leq u_2$ on $\R \times [t^*,\ell_1(x^*+\eps))$ by the
parabolic comparison principle and therefore $\ell_1 \geq \ell_2$ on
$[x^*,x^*+\eps)$, a contradiction.

Moreover, the energy estimate in the last part of the proof of
Theorem~\ref{u.uniqueness}, following \eqref{e.claim}, shows that
$u_1$ can only exceed $u_2$ somewhere for every $t>t^*$ if
\begin{equation}
  (p_1u_1-p_2u_2)(u_1-u_2)_+ < 0
  \label{e.nonuniquecondition}
\end{equation}
somewhere in every neighborhood of $(x^*,t^*)$.  This can only happen
at points where $p_1=0$, $p_2=1$, and $u_2 < u_1 \leq u^*$.  Thus,
$u_2$ must be decreasing somewhere in every neighborhood of
$(x^*,t^*)$.  But, by transversality and
Lemma~\ref{l.transversality2}, $(u_2)_{t-}(x^*,t^*) > 0$ so that, by
continuity of the time derivative on $D_u$, $u_2$ must be strictly
increasing in some neighborhood of $(x^*,t^*)$ below the parabola
$\Pc$.  If $(x^*,t^*) \notin \Pc$, this is in immediate contradiction.
If $(x^*,t^*) \in \Pc$, this means that the locations where
\eqref{e.nonuniquecondition} occurs must lie in $D_o$, thus within a
gap of $u_1$.  Thus, $u_1$ must have an infinite number of gaps in
every right neighborhood of $x^*$, which is not permitted on its ring
domain.
\end{proof}

\begin{remark}
The proof gives clear constraints on how solutions might be continued
in non-unique ways.  Within a ring domain, so at least for the initial
part of the evolution, non-uniqueness requires ``entanglement'' of the
precipitation fronts of the two different solutions.  Past the point of
breakdown of the ring domain, which can be shown to occur in similar
models and which is conjectured to occur for the HHMO-model as well
based on numerical studies, the possibilities in which non-uniqueness
might occur are less constrained \cite{DarbenasO:2021:BreakdownLP}.
It could come about, e.g., via different ways of accumulating an
infinite number of precipitation rings in right neighborhoods of a
critical point $x^*$.  Such scenarios remain a possible even for
generalized solutions to the related scalar model problem discussed in
\cite{DarbenasO:2021:BreakdownLP}, and it is open whether there is a
natural selection principle for such generalized solutions that will
lead to unique continuation.
\end{remark}

\section*{Acknowledgments}

We thank Danielle Hilhorst for insightful discussions.  This work was
funded through German Research Foundation grant OL 155/5-1.
Additional funding was received via the Collaborative Research Center
TRR 181 ``Energy Transfers in Atmosphere and Ocean'', also supported
by the German Research Foundation, also funded by the DFG under
project number 274762653.

\bibliographystyle{acm}
\bibliography{liesegang}

\end{document}